\newcommand{\Z}{\mathbb{Z}}
\newcommand{\ord}{\text{ord}}
\newcommand{\R}{\mathbb{R}}
\newcommand{\Q}{\mathbb{Q}}
\newcommand{\gothm}{\mathfrak m}
\newcommand{\Sym}{\text{Sym}}
\newcommand{\Spec}{\text{Spec}}
\newcommand{\Proj}{\text{Proj}}
\newcommand{\Spf}{\text{Spf}}
\newcommand{\ang}[1]{\langle #1 \rangle}
\renewcommand {\bar}{\overline}
\newcommand{\Ext}{\text{Ext}}
\newtheorem{theorem}{Theorem}[section]
\newtheorem{lemma}[theorem]{Lemma}
\newtheorem{proposition}[theorem]{Proposition}
\newtheorem{corollary}[theorem]{Corollary}
\newtheorem{question}[theorem]{Question}
\theoremstyle{definition}
\newtheorem{example}[theorem]{Example}
\newtheorem{remark}[theorem]{Remark}
\newtheorem{definition}[theorem]{Definition}
\def\frak{\relaxnext@\ifmmode\let\next\frak@\else
	\def\next{\Err@{Use \string\frak\space only in math mode}}\fi\next}
\def\goth{\relaxnext@\ifmmode\let\next\frak@\else
	\def\next{\Err@{Use \string\goth\space only in math mode}}\fi\next}
\def\frak@#1{{\frak@@{#1}}}
\def\frak@@#1{\noaccents@\fam\euffam#1}
\font\tengoth=eufm10
\def\goth{\fam\gothfam\tengoth} \textfont\gothfam=\tengoth
\title{$p$-adic estimates of exponential sums on curves}
\author{Joe Kramer-Miller}
\date{}
\begin{document}
	
	\maketitle
	
	\begin{abstract}
		The purpose of this article is to prove a ``Newton over Hodge'' result for 
		exponential sums on curves. Let $X$ be a smooth proper curve over a finite 
		field $\mathbb{F}_q$ of characteristic $p\geq 3$ and let $V \subset X$ be an 
		affine curve. For a regular function $\overline{f}$ on $V$, we may form the 
		$L$-function $L(\overline{f},V,s)$ associated to the exponential sums of 
		$\overline{f}$. In this article, we prove a lower estimate on the Newton 
		polygon of $L(\overline{f},V,s)$. The estimate depends on the local monodromy 
		of $f$ around each point $x \in X-V$. This confirms a hope of Deligne that 
		the irregular Hodge filtration forces bounds on $p$-adic valuations of 
		Frobenius eigenvalues. As a corollary, we obtain a lower estimate on the 
		Newton polygon of a curve with an action of $\Z/p\Z$ in terms of local 
		monodromy invariants.
	\end{abstract}
	
	\tableofcontents

	\section{Introduction}
	\subsection{Motivation}
	Let $p$ be a prime with $p\geq 3$ and let $q=p^a$. 
	Let $V$ be a $d$-dimensional smooth affine variety over $\mathbb{F}_q$
	and let $\overline{f}$ be a regular function on $V$. We define the 
	exponential sum over the $\mathbb{F}_{q^k}$-points of $V$
	to be 
	\begin{align*}
	S_k(\overline{f}) &= \sum_{x \in V(\mathbb{F}_{q^k}) } 
	\zeta_p^{Tr_{\mathbb{F}_{q^k}/\mathbb{F}_p}(\overline{f}(x))},
	\end{align*}
	where $\zeta_p$ is a primitive $p$-th root of unity.
	A fundamental question in number theory is to understand 
	the sequence of numbers $S_k(\overline{f})$ as $k$ varies. One approach is
	to study the generating $L$-function
	\begin{align*}
	L(\overline{f},V,s) &= \exp\Bigg ( \sum_{k=1}^\infty 
	\frac{S_k(\overline{f})s^k}{k} \Bigg).
	\end{align*}
	By the work of Dwork and Grothendieck, we have
	\begin{align*}
	L(\overline{f},V,s) &= \frac{\prod_{i=1}^{n_1}(1-\alpha_i 
		s)}{\prod_{i=1}^{n_2}(1-\beta_is)} 
	\in \Z[\zeta_p](s),
	\end{align*}  
	which implies
	\begin{align*}
	S_k(\overline{f}) &= \sum_{i=1}^{n_1} \alpha_i^{k} - \sum_{i=1}^{n_2} \beta_i^{k}.
	\end{align*}
	We are thus reduced to studying the algebraic integers $\alpha_i$ and $\beta_j$. 
	What can be said about $\alpha_i$ and $\beta_j$ in general? Deligne's work on the 
	Weil conjectures provides us with two stringent conditions.
	First, for $\ell\neq p$ these numbers are $\ell$-adic units:
	$|\alpha_i|_\ell = |\beta_j|_\ell=1.$
	Second, the $\alpha_i$ and $\beta_j$ are Weil numbers: there exists $u_i,v_j \in \Z 
	\cap [0,2d]$, known as weights, such that for any
	Archimedean absolute value $|\cdot |_\infty$ we have 
	$|\alpha_i|_\infty=q^{\frac{u_i}{2}}$ and $|\beta_j|_\infty= q^{\frac{v_j}{2}}$. 
	
	This leaves us with two natural questions. What are the 
	$p$-adic valuations of $\alpha_i$ and $\beta_j$, and what are the $u_i,v_j$? 
	If we take $V$ to be a multidimensional torus
	and make certain nonsingularity assumptions on $\overline{f}$, a
	great deal known about both questions. The $p$-adic valuations
	have been studied intensively by many
	(see, e.g., \cite{Adolphson-Sperber-exponential_sums} and 
	\cite{Wan-NP_zeta_functions} for two monumental works). The weights have been 
	computed by Adolphson-Sperber and Denef-Loeser (see 
	\cite{Adolphson-Sperber-exponential_sums} and 
	\cite{Denef-Loesler_weights_of_exponential_sums}).
	When $V$ is not a torus, it is possible to reduce to the toric
	case by an inclusion-exclusion argument. This allows us to
	write $L(\overline{f},V,s)$ as a ratio of $L$-functions of exponential sums 
	on tori. However, it is difficult to deduce precise results about 
	$L(\overline{f},V,s)$
	from this ratio, as cancellation often occurs.
	
	Now consider the case where $V$ is a smooth curve.
	In this situation $L(\overline{f},V,s)$ is a polynomial and 
	the weights are all one.
	Thus, we are left with the question of the $p$-adic valuations. 
	The purpose of this article is to prove lower
	bounds on the $q$-adic Newton polygon of $L(\overline{f},V,s)$. We prove a 
	``Newton over Hodge'' style result. This is in the vein of Mazur's celebrated
	theorem, which compares the Newton and Hodge polygons of a variety (see \cite{Mazur-NoverH}). 
	Our Hodge
	polygon is defined from the local monodromy at the poles of $g$. 
	Under a non-degeneracy assumption, this Hodge polygon is precisely the polygon 
	associated to the irregular
	Hodge filtration introduced by Deligne (see \cite[Th\'eorie de Hodge 
	Irr\'eguli\`ere]{Deligne-Malgrange-Ramis}).
	In particular, we confirm Deligne's hope that the irregular Hodge filtration
	forces bounds on $p$-adic valuations of Frobenius eigenvalues. 
	
	\subsection{Statement of main theorem}
	\label{subsection: statement of main theorem}
	\subsubsection{Exponential sums on curves}
	\label{subsubsection: exponential sums of curves}
	We now assume that $V=\Spec(B)$ is a smooth affine curve and we let $X$ be its 
	smooth compactification. Let $g$ be the genus of $X$. Assume that
	$\overline{f}$ is not of the form $x^p-x$ with $x \in B \otimes_{\mathbb{F}_q} 
	\overline{\mathbb{F}}_q$.
	We obtain a $\Z/p\Z$-cover of smooth proper curves $r:C \to X$ 
	from the equation
	\begin{align*}
	Y^p-Y&=\overline{f}, 
	\end{align*}
	and our condition on $\overline{f}$ implies that $C$ is geometrically connected.
	Let $\{\tau_1,\dots,\tau_{\mathbf{m}}\}\in X$ be the points
	over which $r$ ramifies and let $Z=X-\{\tau_1,\dots,\tau_{\mathbf{m}}\}$. 
	Note that $V$ is contained in $Z$.
	After increasing $q$, we may assume
	that each point in $X-V$ is defined over $\mathbb{F}_q$. 
	We remark that increasing $q$ does not alter the $q$-adic Newton polygons
	we are studying, so we will do so when convenient. 
	Consider a nontrivial character
	$\rho:Gal(C/X) \to \Z_p[\zeta_p]^\times$
	and its Artin $L$-function
	\begin{align} \label{introduction of L-function}
	L(\rho,s)&= \prod_{x \in Z} \frac{1}{1 - \rho(Frob_x) s^{\deg(x)}}.
	\end{align}
	Let $NP_q(L(\rho,s))$ (resp. $NP_q(L(\overline{f},V,s))$) denote the $q$-adic
	Newton polygon of $L(\rho,s)$ (resp. $L(\overline{f},V,s)$). 
	
	For some choice of $\zeta_p$, we have
	\begin{align*}
	S_k(\overline{f}) &=\sum_{x \in V(\mathbb{F}_{q^k}) } 
	\zeta_p^{Tr_{\mathbb{F}_{q^k}/\mathbb{F}_p}(\overline{f}(x))} = \sum_{x \in 
		V(\mathbb{F}_{q^k})} \rho(Frob_x),
	\end{align*}
	which gives the relation
	\begin{align*}
	L(\overline{f},V,s) &= L(\rho,s)\prod_{x \in Z-V} (1-\rho(Frob_x)s) .
	\end{align*}
	Therefore, we are reduced to studying $NP_q(L(\rho,s))$.
	
	Our main result gives a lower bound on $NP_q(L(\rho,s))$ in terms
	of the Swan conductor of $\rho$ at each point. 
	The Swan conductor $d_i$ at the point $\tau_i$ has a simple description. Let 
	$t_i$ be
	a local parameter at $\tau_i$. Locally 
	$r$ is given by an equation $Y^p-Y=g_i$, where $g_i \in \mathbb{F}_q((t_i))$.
	We may assume that $g_i$ has a pole whose order is prime to $p$.
	The order of this pole is equal to the Swan conductor. That is, 
	$g_i=\sum\limits_{n\geq-d_i} 
	a_nt_i^n$ and $a_{-d_i}\neq 0$. 
	
	\begin{theorem} \label{main theorem}
		The polygon $NP_q(L(\rho,s))$ lies above the polygon whose slopes are
		\begin{align}
		\Big\{\underbrace{0,\dots,0}_{g+{\mathbf{m}}-1}, \underbrace{1,\dots,1}_{g+{\mathbf{m}}-1} ,
		,\frac{1}{d_1}, \dots, \frac{d_1-1}{d_1}, \dots, 
		\frac{1}{d_{\mathbf{m}}}, \dots, \frac{d_{\mathbf{m}}-1}{d_{\mathbf{m}}}\Big\}. \label{hodge bound}
		\end{align}
		Let $c$ be the cardinality of $Z-V$. Then $NP_q(L(\overline{f},V,s))$
		lies above the polygon whose slopes are 
		\[ \Big\{\underbrace{0,\dots,0}_{g+{\mathbf{m}}+c-1}, \underbrace{1,\dots,1}_{g+{\mathbf{m}}-1} ,
		,\frac{1}{d_1}, \dots, \frac{d_1-1}{d_1}, \dots, 
		\frac{1}{d_{\mathbf{m}}}, \dots, \frac{d_{\mathbf{m}}-1}{d_{\mathbf{m}}}\Big\}.\]
	\end{theorem}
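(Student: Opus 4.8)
The plan is to translate the statement about the Artin $L$-function $L(\rho,s)$ into a statement about Frobenius acting on the rigid cohomology of a rank-one overconvergent $F$-isocrystal on $Z$, and then to estimate the Newton polygon of that Frobenius using a Dwork-theoretic description of the isocrystal together with the local data at the punctures $\tau_i$. The character $\rho: \mathrm{Gal}(C/X) \to \Z_p[\zeta_p]^\times$ corresponds to an Artin–Schreier–Dwork isocrystal $\mathcal{L}_\rho$ on $Z$, and by the cohomological interpretation of $L$-functions (Dwork, Robba, Étesse–Le Stum) we have $L(\rho,s) = \det(1 - s\,\mathrm{Frob} \mid H^1_{\mathrm{rig},c}(Z, \mathcal{L}_\rho))^{\pm 1}$, with the $H^0$ and $H^2$ contributions vanishing because $\rho$ is nontrivial and $Z$ is affine. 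So $NP_q(L(\rho,s))$ is the Newton polygon of Frobenius on a single cohomology group whose dimension is $2(g + \mathbf{m} - 1) + \sum_i (d_i - 1)$ by the Euler–Poincaré/Grothendieck–Ogg–Shafarevich formula (the Swan conductor at $\tau_i$ being $d_i$), matching the number of slopes listed in \eqref{hodge bound}.

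The core of the argument is then a "Newton above Hodge" estimate for this Frobenius. First I would choose a convenient model: after enlarging $q$, lift everything to characteristic zero, realize $\mathcal{L}_\rho$ via a Dwork-type exponential $\exp(\pi(\hat f - \hat f^\sigma))$ on a lift of $Z$, and present $H^1_{\mathrm{rig},c}$ as (a subquotient of) a Koszul-type complex built from $\hat f$ and the coordinate on $Z$, as in the Adolphson–Sperber formalism adapted to curves. On this complex Frobenius decomposes, after the usual $\pi$-twist, into a product of a "Dwork operator" $\Psi_q$ and a series operator, and one estimates $p$-adic valuations of the matrix entries. The key local input is that near each $\tau_i$ the function $g_i$ has a pole of order exactly $d_i$ with $p \nmid d_i$, and this forces the relevant basis differentials $t_i^{-j}\,dt_i/t_i$ for $1 \le j \le d_i - 1$ to contribute a slope of at least $j/d_i$ to the Frobenius — this is the local Hodge–Newton bound, and summing over $i$ and over $j$ produces exactly the fractional slopes in \eqref{hodge bound}. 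The two blocks of $g + \mathbf{m} - 1$ slopes equal to $0$ and $1$ come from the "tame" part of the cohomology, i.e. the part coming from $H^1$ of the compactified curve twisted by the unramified-at-infinity behavior, whose Newton polygon lies above that of an abelian variety of dimension $g + \mathbf{m} - 1$ (slopes in $[0,1]$ summing correctly), by the Weil bounds on weights (all weights equal to $1$, so the polygon lies above the segment of slopes $0^{g+\mathbf{m}-1}, 1^{g+\mathbf{m}-1}$).

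For the second assertion, I would use the factorization $L(\overline{f},V,s) = L(\rho,s)\prod_{x \in Z - V}(1 - \rho(\mathrm{Frob}_x)s)$ recorded in the excerpt. Each of the $c$ extra factors $1 - \rho(\mathrm{Frob}_x)s$ has its root a $p$-adic unit times a $q$-adic unit (since $\rho(\mathrm{Frob}_x) \in \Z_p[\zeta_p]^\times$ and $x$ is an $\mathbb{F}_q$-point, so the root is a root of unity), contributing slope $0$; hence the Newton polygon of the product is obtained from that of $L(\rho,s)$ by prepending $c$ more zero slopes, which is exactly the stated polygon. The main obstacle I anticipate is the local analysis at the punctures: making the bound "$t_i^{-j}$ contributes slope $\ge j/d_i$" precise requires a careful choice of lifting of the local equation $Y^p - Y = g_i$, control of the Artin–Schreier reduction to ensure $p \nmid d_i$ is genuinely available (possibly after changing the local parameter), and then a Hasse–Arf / Newton-polygon-of-a-twisted-Dwork-operator computation; this is where the irregular Hodge filtration of Deligne–Malgrange–Ramis enters, and matching it cleanly with the $p$-adic slope estimate is the technical heart of the paper. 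A secondary subtlety is patching the local contributions into a global bound — one must show the global Newton polygon is at least the "sum" (in the Newton-polygon sense, i.e. lower convex hull of concatenated slope multisets) of the local contributions plus the tame block, which requires either a Mayer–Vietoris / excision argument on $Z = V \cup (\text{punctured discs})$ or a direct decomposition-completion argument on the Dwork complex.
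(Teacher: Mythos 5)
Your high-level skeleton is sound (translate to a $p$-adic trace formula, count $2(g+\mathbf{m}-1)+\sum(d_i-1)$ slopes via Euler--Poincar\'e, get the fractional slopes from the Swan conductors $d_i$ at the punctures, and handle $L(\overline{f},V,s)$ by appending $c$ slope-zero factors), and your treatment of the second assertion is essentially the paper's. But the heart of your proposal --- realizing $\mathcal{L}_\rho$ by a global Dwork-type exponential $\exp(\pi(\hat f - \hat f^\sigma))$ and reading off slopes from a Koszul complex in the Adolphson--Sperber style --- is precisely what \emph{fails} for curves other than $\mathbb{G}_m$ or $\mathbb{A}^1$, and the paper is organized entirely around circumventing this. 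On a general affine curve there is no global parameter, hence no canonical Frobenius lift, and no reason a global splitting function with controlled growth exists; the ``nice Dwork exponential'' you want to exponentiate is exactly the missing ingredient. The paper replaces it by (i) constructing a tame cover $\eta: X \to \mathbb{P}^1$ branched only over $\{0,1,\infty\}$ with prescribed local ramification, so that the Frobenius lift $t\mapsto t^p$ on $\mathbb{P}^1$ pulls back to a lift on $X$ with only two local shapes ($u\mapsto u^p$ and $u\mapsto\sqrt[p-1]{(u^{p-1}+1)^p-1}$), (ii) working with the Monsky trace formula on the level of functions rather than with a cohomological presentation, and (iii) using purely \emph{local} Frobenius structures at each $Q\in W$ (Artin--Hasse at the ramified $\tau_i$, unramified units elsewhere) glued only up to a twist by $\vec{b}$ with $\vec b \equiv 1 \bmod \gothm$, then estimating the Fredholm determinant column by column.

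Two more specific gaps. First, your claim that the ``tame block'' contributes slopes bounded below by $\{0^{g+\mathbf{m}-1},1^{g+\mathbf{m}-1}\}$ ``by the Weil bounds on weights'' is not a valid deduction: knowing all archimedean weights are $1$ gives an upper bound of $1$ on each $p$-adic slope and, with duality, a symmetry of the polygon, but extracting a clean $\{0^k,1^k\}$ lower bound requires an actual Hodge-theoretic input, and on $Z$ with coefficients in $\mathcal{L}_\rho$ there is no obvious compactly supported Poincar\'e duality pairing to invoke without further work. In the paper, the $g+\mathbf{m}-1$ zero slopes arise concretely from the kernel of the projection $\mathcal{B}^\dagger\to\mathcal{S}^\dagger$, identified via Riemann--Roch and Riemann--Hurwitz with $H^0(X,\mathcal{O}_X(D))$ of the right dimension, not from a weight argument. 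Second, the ``Mayer--Vietoris / excision'' patching you flag as a subtlety is indeed where a cohomological approach would become painful; the paper sidesteps it by never leaving the level of chains, so that local estimates on $U_p$ and local Frobenius structures combine additively into column estimates of a single global operator, with no boundary maps to control. So while the statement you are aiming for is the same, the route you sketch stalls at exactly the step the paper's geometric construction ($\eta$) and local-to-global Fredholm estimate were designed to unblock.
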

	
	\begin{remark}
		If $p\nmid \ord_{\tau_i}(\overline{f})$ for each $i$, the polygon with slopes 
		\eqref{hodge 
			bound} is precisely
		the polygon associated to the irregular Hodge filtration
		on twisted de Rham cohomology over a complex curve. One of Deligne's 
		motivations 
		for
		introducing the irregular Hodge filtration
		was a hope that it would force some bounds on the $p$-adic
		valuations of Frobenius eigenvalues. Theorem \ref{main theorem}
		affirms this prediction. We thank Jeng-Daw Yu for bringing
		this connection to our attention.
	\end{remark}
	
	\subsubsection{Zeta functions of Artin-Schreier covers}
	Theorem \ref{main theorem}
	also has interesting consequence about Newton polygons of 
	$\Z/p\Z$-covers of curves. Let $r: C \to X$ be a $\Z/p\Z$-cover
	ramified over $\tau_1,\dots,\tau_{\mathbf{m}}$ with Swan conductor
	$d_i$ at the point $\tau_i$. We let $g_C$ denote the genus of $C$.
	The zeta function of $C$ (resp. $X$) is a rational 
	function of the form $Z(C,s)=\frac{P_C(s)}{(1-s)(1-qs)}$ (resp.
	$Z(X,s)=\frac{P_X(s)}{(1-s)(1-qs)}$). 
	Let $NP_C$ (resp. $NP_X$) denote the $q$-adic Newton polygon
	of $P_C$ (resp. $P_X$).
	We are interested in the following question: to what extent can
	we determine $NP_C$ from $NP_X$ and the ramification invariants of $r$? 
	One positive result is the Deuring-Shafarevich
	formula (see \cite{Crew-p-covers}), which allows us
	to completely determine the number of slope zero segments in $NP_C$. However, a 
	precise
	formula for the higher slopes of $P_C$ is impossible. For example,
	work of Blach and F\`erard compute the generic Newton polygon for the moduli of maps
	$C \to \mathbb{P}^1$ ramified at only one point with a fixed ramification break $d$ (see \cite{Blache-Ferard-Newton_stratum}). They also show that if $p \not \equiv 1 \mod d$,
	there is a nontrivial locus where the Newton polygon jumps. This means
	that the ramification invariants are not enough to completely pin down
	the Newton polygon. Instead,
	the best we may hope for are bounds. Since
	the endpoints of $NP_C$ are $(0,0)$ and $(2g_C,g_C)$, we know
	that $NP_C$ lies below the polygon consisting of $2g_C$ segments of slope $\frac{1}{2}$.
	To find a lower bound, we may use Theorem \ref{main theorem}. Recall the
	decomposition:
	\begin{align*} 
	Z(C,s)&= Z(X,s) \prod_{\rho} L(\rho,s),
	\end{align*}
	where $\rho$ varies over the nontrivial characters 
	$Gal(C/X) \to \Z_p[\zeta_p]^\times$. This gives:
	\begin{corollary}
		The Newton polygon $NP_C$ lies above the polygon 
		whose slopes are the multiset: 
		\[ NP_X \bigsqcup \Bigg ( \bigsqcup_{i=1}^{p-1} 
		\Bigg\{\underbrace{0,\dots,0}_{g+{\mathbf{m}}-1}, 
		\underbrace{1,\dots,1}_{g+{\mathbf{m}}-1} ,
		,\frac{1}{d_1}, \dots, \frac{d_1-1}{d_1}, \dots, 
		\frac{1}{d_{\mathbf{m}}}, \dots, \frac{d_{\mathbf{m}}-1}{d_{\mathbf{m}}} \Bigg\}\Bigg),\]
		where $\sqcup$ denotes disjoint union.
	\end{corollary}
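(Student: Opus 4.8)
The plan is to deduce the corollary from Theorem \ref{main theorem} via the factorization of the zeta function recalled just above. Clearing the common denominator $\frac{1}{(1-s)(1-qs)}$ in $Z(C,s) = Z(X,s)\prod_\rho L(\rho,s)$, where $\rho$ runs over the $p-1$ nontrivial characters of $\mathrm{Gal}(C/X) \cong \Z/p\Z$, gives the polynomial identity $P_C(s) = P_X(s)\prod_\rho L(\rho,s)$. So it suffices to (i) bound each of the $p-1$ factors $NP_q(L(\rho,s))$, and (ii) combine these bounds with $NP_X$ to bound $NP_q(P_C)$.

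For step (i) I would apply Theorem \ref{main theorem} to each nontrivial $\rho$. The only point worth spelling out is that the resulting bound is the \emph{same} for all $p-1$ of them: the slope multiset \eqref{hodge bound} depends only on $g$, $\mathbf{m}$, and the Swan conductors $d_1,\dots,d_{\mathbf{m}}$, and these are ramification invariants of the cover $r$ rather than of the individual character. Indeed, for a $\Z/p\Z$-extension of local fields the ramification filtration on the inertia group has a single jump, at the break $d_i$, so every nontrivial character is nontrivial on all of $I_{d_i}$ and trivial on $I_{d_i+1}=1$; hence each nontrivial $\rho$ has local Swan conductor exactly $d_i$ at $\tau_i$, and the local Artin--Schreier normal form $Y^p-Y=g_i$ with $\ord_{\tau_i}(g_i)=-d_i$ prime to $p$ is precisely the one feeding into Theorem \ref{main theorem}. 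Thus $NP_q(L(\rho,s))$ lies above the polygon $H$ with slope multiset \eqref{hodge bound}, for every nontrivial $\rho$; note that $H$ and $NP_q(L(\rho,s))$ share horizontal extent by the Grothendieck--Ogg--Shafarevich formula, so the ``lies above'' comparison, and the concatenation below, make sense.

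For step (ii) I would use three elementary facts: the $q$-adic Newton polygon of a product of polynomials with unit constant term is the concatenation (sorted merge of slope multisets) of the Newton polygons of the factors; this is unaffected by passing to the ramified extension $\mathbb{Q}_q(\zeta_p)$ in which the $L(\rho,s)$ naturally live, since slope multisets are Galois stable; and concatenation is monotone with respect to ``lies above'', being an infimal convolution of the corresponding convex functions. Applying monotonicity with $NP_X$ lying above itself and each $NP_q(L(\rho,s))$ lying above $H$ then shows that $NP_C = NP_q(P_C)$ lies above $NP_X \sqcup \big(\bigsqcup_{i=1}^{p-1} H\big)$, which is exactly the claimed polygon.

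I do not anticipate a genuine obstacle: the entire analytic content is contained in Theorem \ref{main theorem}, and what remains is the zeta-function bookkeeping above. The two places that most deserve a careful sentence are the uniformity of the Swan conductors over the $p-1$ nontrivial characters in step (i), and the compatibility of the $q$-adic normalizations when one passes between $P_C\in\mathbb{Q}[s]$ and the individual $L(\rho,s)\in\Z_p[\zeta_p][s]$.
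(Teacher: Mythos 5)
Your proposal is correct and takes essentially the same approach as the paper: the paper states the decomposition $Z(C,s) = Z(X,s)\prod_\rho L(\rho,s)$ and then treats the corollary as immediate from Theorem \ref{main theorem} applied to each nontrivial $\rho$, which is precisely the argument you spell out. The two points you flag as worth a careful sentence (uniformity of Swan conductors across nontrivial characters of a $\Z/p\Z$-cover, and monotonicity of concatenation under $\succeq$) are indeed the only content beyond the factorization, and you handle both correctly.
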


	\subsection{Idea of proof and previous work}
	\subsubsection{The Monsky trace formula}
	As our question is inherently $p$-adic in nature,
	it is necessary to utilize a $p$-adic Lefschetz trace formula.
	More specifically, we will utilize the Monsky trace formula
	(see \cite{Monsky-forma_cohomology3}), which generalizes
	the trace formulas of Dwork and Reich. Although the Monsky trace formula is often 
	viewed as a precursor to the trace formula of Etesse-Stum for rigid cohomology 
	(see \cite{Etesse-leStum}), 
	it is advantageous in that it allows us to compute on the level of chains. 
	The Monsky trace formula works roughly as follows (see \S \ref{subsection: MW 
		trace 
		formula} for a more precise formulation).
	For simplicity we will assume $q=p$. Let $\mathcal{X}^{rig}$ be a rigid analytic 
	lifting
	of $X$ defined over a finite extension $L$ of $\Q_p$ 
	and let $\mathcal{V}^{rig}$ be the the tube of $V \subset X$.
	Let $\mathcal{B}^\dagger$ denote the functions 
	on $\mathcal{V}^{rig}$ that overconverge in each tube $]\tau_i[$ and let $\sigma: 
	\mathcal{B}^\dagger \to \mathcal{B}^\dagger$ be a
	ring homomorphism that lifts the $p$-th power Frobenius map of $V$.
	Using $\sigma$ we define an operator $U_p: \mathcal{B}^\dagger\to 
	\mathcal{B}^\dagger$, which is the composition
	of a trace map $Tr:\mathcal{B}^\dagger\to \sigma(\mathcal{B}^\dagger)$ with 
	$\frac{1}{p}\sigma^{-1}$.

	The Galois representation $\rho$ corresponds to
	an overconvergent $F$-isocrystal with rank one. 
	This is a 
	$\mathcal{B}^\dagger$-module $M=\mathcal{B}^\dagger e_0$
	and a $\mathcal{B}^\dagger$-linear isomorphism $\varphi: M \otimes_{\sigma} 
	\mathcal{B}^\dagger \to M$. 
	Such an $F$-isocrystal is determined 
	entirely by some $\alpha \in \mathcal{B}^\dagger$ such that
	$\varphi(e_0 \otimes 1) = \alpha e_0$, which we refer to
	as the Frobenius structure of $M$.
	In our specific setup (see \S \ref{section: global bounds}),
	the Monsky trace formula can be written as follows:
	\begin{align} \label{MW:intro}
	L(\rho,s) &= \frac{\det(1-sU_p \circ \alpha |\mathcal{B}^\dagger)}
	{\det(1-spU_p \circ \alpha |\mathcal{B}^\dagger)},
	\end{align}
	where $\alpha$ means the ``multiplication by $\alpha$'' operator.
	To utilize \eqref{MW:intro}, we need to understand $U_p \circ \alpha$. This 
	breaks 
	up into two questions:
	\begin{question} \label{Question 1}
		Is there a Frobenius $\sigma$
		and a basis of $\mathcal{B}^\dagger$,
		for which the operators $U_p$ are
		reasonable to understand? 
	\end{question}
	\begin{question}\label{Question 2}
		Can we understand the Frobenius structure
		of $M$? In particular, we need to understand the
		``growth'' of the Frobenius structure in terms of a
		basis of $\mathcal{B}^\dagger$. 
	\end{question}
	
	\subsubsection{Previous work}
	Let us now recount what was previously known
	and earlier approaches to these questions. 
	When $X=\mathbb{P}^1_{\mathbb{F}_p}$ and $m=1,2$, Theorem
	\ref{main theorem} is due to Robba (see \cite{Robba-lower_bounds_NP}), building 
	off of ideas
	of Dwork. In this case, it is trivial
	to address Question \ref{Question 1}. Indeed, 
	we have $\mathcal{B}^\dagger = L\ang{t,t^{-1}}^\dagger$ and
	we may take $\sigma$ to be the map that sends $t \mapsto t^p$. 
	The Frobenius structure, which
	is known as a \emph{splitting function} in this case, may be described explicitly 
	using the Artin-Hasse exponential $E(t)$ (see \S \ref{subsection: frobenius 
		structures for asw extensions}). 
	The Taylor expansion of $E(t)$ lies in $\Z_p[[t]]$,
	which makes it particularly easy to estimate the Frobenius structure. 
	The work of Adolphson-Sperber and Wan for higher dimensional tori 
	also utilizes the Artin-Hasse exponential. However, the cohomological 
	calculations 
	used to find $p$-adic
	estimates are significantly more nuanced.
	
	The case where $X=\mathbb{P}^1_{\mathbb{F}_p}$
	and $m\geq 1$ was studied by Zhu 
	(see \cite{Zhu-exponential_sums_affinoids}).
	The key idea for addressing Question \ref{Question 1} is
	to decompose $\mathcal{B}^\dagger$ using
	partial fractions. The operator $U_p$
	can then be understood by modifying computations
	of Dwork (see \cite[Chapter 5]{Dwork-book}). To address Question \ref{Question 2},
	Zhu again utilizes the Artin-Hasse exponential, 
	analogous to Robba's splitting function. 
	However, when $m>2$ the Frobenius structure
	used in \cite{Zhu-exponential_sums_affinoids} does
	not correspond to a finite character, and thus
	calculates a different $L$-function. In particular Theorem \ref{main theorem} was 
	unknown for $m>2$. Nevertheless,
	Zhu's idea of computing the $L$-function by looking
	locally around each pole has influenced this article.
	
	\subsubsection{The approach to Questions \ref{Question 1} and \ref{Question 2} in 
		this article}
	The classical approach utilized by Adolphson-Sperber, Robba, and Wan, 
	no longer works when considering affine curves other than $\mathbb{G}_m$
	or $\mathbb{A}^1$. 
	First, there is no clear choice of Frobenius lift.
	Second, it is no longer clear how to make sense of the ring of functions
	$\mathcal{B}^\dagger$. Finally, the Frobenius structure of $M$ no longer has a 
	simple 
	global representation. This means that an entirely new method is needed to study
	the Fredholm determinant of $U_p \circ \alpha$.

	We first discuss our approach to Question \ref{Question 1}.
	Let $\overline{u}$ be a local parameter at a point $x \in X$ and let $u$ be a 
	local parameter that lifts $\overline{u}$.
	We need to find a Frobenius endomorphism 
	$\sigma:\mathcal{B}^\dagger \to \mathcal{B}^\dagger$,
	that behaves nicely with respect to $u$. 
	For a rational line with a global parameter
	$t$, we may take $\sigma$ to be the map that sends
	$t \mapsto t^p$. For a general curve $X$, we bootstrap
	from the rational case. To do this, we use a map $\eta: X \to 
	\mathbb{P}^1_{\mathbb{F}_p}$ satisfying certain properties: it is \`etale outside 
	of $\{0,1,\infty\}$,
	tamely ramified above $0$ and $\infty$, and every point in $\eta^{-1}(1)$ has 
	ramification index $p-1$. 
	Tdhe Frobenius endomorphism $t \mapsto t^p$ extends to $\sigma$
	on $\mathcal{B}^\dagger$. 
	If we choose our local parameters carefully, we only
	need to consider two types of local Frobenius endomorphisms: $u \mapsto u^p$ and $u 
	\mapsto
	\sqrt[p-1]{(u-1)^p+1}$. Furthermore, the map $\eta$ will be constructed so that 
	the local 
	Frobenius endomorphism at each $\tau_i$ is of the form $u \mapsto u^p$ (this 
	geometric setup is done in \S \ref{section: global bounds}). 
	In \S \ref{section: local Up operators} we estimate local versions of $U_p$
	for each type of local Frobenius endomorphism.
	This is one of the key technical obstacles for dealing with higher genus curves.
	
	Question \ref{Question 2} also requires a fresh approach. Previous work
	required a global \emph{splitting function} with nice properties. Over an 
	arbitrary curve $X$, there is no reason to expect a reasonable splitting function 
	exists. Instead, we have devised a method of computing the Fredholm determinant 
	of $U_p \circ \alpha$ using local Frobenius structures at each point (see \S 
	\ref{subsection: estimating NP_p}). These local Frobenius structures are fairly easy to 
	compute: if $x \in V$, then $\rho$ is unramified at $x$ and the local Frobenius 
	structure is an element of $\mathcal{O}_L^\times$. If $x \in 
	\{\tau_1,\dots,\tau_m\}$, then the local Frobenius endomorphism is of the form $u 
	\mapsto u^p$ by our construction of $\eta: X \to \mathbb{P}_{\mathbb{F}_p}^1$. We 
	can then use the Artin-Hasse function to construct the local Frobenius structure 
	(see \S \ref{subsection: frobenius structures for asw extensions}). In both 
	cases, 
	we have local Frobenius structures that are reasonable to compute with.
	
	\subsection{Further work}
	There are many natural questions that arise
	from Theorem \ref{main theorem}. The
	most pressing of which is to what extent Theorem \ref{main theorem}
	is optimal. Since $NP_X$
	has integer vertices
	we know that the vertices of $NP_q(L(\rho,s))$ have $y$-coordinates
	in $\frac{1}{p-1}\Z$. This implies that
	if $p \not\equiv 1 \mod d_i$ for some $i$,
	Theorem \ref{main theorem} can be slightly improved.
	For $V=\mathbb{A}^1$, Blach and F\`erard determine
	the generic Newton polynomial for a fixed $d$ (see 
	\cite{Blache-Ferard-Newton_stratum}). It would be interesting to consider higher 
	genus analogues of their result.
	When $V$ is $\mathbb{G}_m$, Robba proved (see 
	\cite{Robba-lower_bounds_NP})
	that the bound in Theorem \ref{main theorem} is
	attained if and only if $p \equiv 1 \mod d_i$
	for $i=1,2$. The obvious generalizations
	of this result
	using the bounds in Theorem \ref{main theorem} is false for general $X$. Indeed, 
	if
	$X$ is not ordinary, then by the Deuring-Shafarevich formula
	we know that the bound in Theorem \ref{main theorem}
	has too many slope zero segments. If $X$ is ordinary, do these generalizations 
	hold? Recently Jeremy Booher and 
	Rachel Pries have combined formal patching methods with Theorem \ref{main 
		theorem} have recently made progress in this direction, utilizing Theorem 
	\ref{main theorem} (see \cite{BOOHER2020}).
	When $X$ is not ordinary, can we replace
	some of the slope zero segments in Theorem \ref{main theorem}
	to obtain an optimal result?

	Another question would be to generalize 
	Theorem \ref{main theorem} to more general
	overconvergent $F$-isocrystals. Given an overconvergent
	$F$-isocrystal $M$ on $V$,
	can we bound $NP_q(L(\rho,s))$ in terms
	of local Swan conductors and perhaps the Frobenius slopes of $M$? 
	In light of recent work on crystalline companions by Abe (see
	\cite{Abe-crystalline_companions}), this would have profound
	consequences for the $L$-functions of $\ell$-adic sheaves and automorphic
	forms on function fields. 
	The methods developed in this article give
	a very general solution to Question \ref{Question 1}.
	Thus, to generalize Theorem \ref{main theorem}, the main difficulty lies in
	bounding the Frobenius structure of $M$. If
	$M$ is unit-root, we suspect that Theorem \ref{main theorem}
	has a direct generalization using the Swan conductors
	of the corresponding $p$-adic representation. 
	
	Finally, we mention our requirement that $p\geq 3$. When $p=2$
	it is likely that the methods in this paper still work.
	The main difficulty is that some estimates in \S
	\ref{section: local Up operators} must be modified. It is also not immediately 
	clear that we can find a cover $\eta: X \to \mathbb{P}_{\mathbb{F}_q}^1$ 
	satisfying the desired properties. To construct $\eta$, we use the fact that $X$ 
	admits a simply branched map to $\mathbb{P}_{\mathbb{F}_q}^1$, which is false 
	when $p=2$. However, upcoming work of
	of Kiran Kedlaya, Daniel Litt, and Jakub Witaszek provides a Belyi map in this 
	case. This should enough to handle the $p=2$ case.
	
	\subsection{Outline}
	We begin by describing our global setup in \S \ref{section: global bounds}.
	In \S \ref{section: local Up operators} we study the growth
	of local $U_p$ operators.
	In \S \ref{section: Local unit-root $F$-isocrystals}
	we introduce $F$-crystals, study the $F$-crystal associated to $\rho$,
	and study
	the growth of local $F$-crystals for Artin-Schreier covers.
	In \S \ref{section: newton polygons and functional analysis}
	we give some preliminary results on Fredholm determinants, Newton polygons, and Dwork operators. Finally, in \S \ref{section: proof of theorems} 
	we estimate the Fredholm determinant of $U_p \circ \alpha$ and 
	complete the proof of
	Theorem \ref{main theorem}.

	\subsection{Acknowledgments}
	Throughout the course of this work, we have
	benefited greatly from conversations with
	Daqing Wan, June Hui Zhu, James Upton,
	and Andrew Obus. This article has
	also benefited greatly from 
	comments and suggestions given by the anonymous referee.

	\section{Notation}
	
	\subsection{Conventions} \label{section: convensions}
	
	The following conventions will be used throughout the article.
	We let $\mathbb{F}_q$ be an extension of $\mathbb{F}_p$ with
	$a=[\mathbb{F}_q:\mathbb{F}_p]$. We remark that it suffices
	to prove Theorem \ref{main theorem}
	after replacing $q$ with a larger power of $p$. In particular, we will increase $q$ throughout the article 
	when it simplifies arguments. Let $L_0$ be the 
	unramified
	extension of $\Q_p$ whose residue field is $\mathbb{F}_q$. Let $E$
	be a totally ramified finite Galois extension of $\Q_p$ of degree $e$ and set 
	$L=E\otimes_{\Q_p} L_0$. We define $\mathcal{O}_L$ (resp. $\mathcal{O}_E$) to be the ring of integers of $L$ (resp. $E$) and let
	$\gothm$ be the maximal ideal of $\mathcal{O}_L$. We let $\pi_\circ$ be a uniformizing elemenet of $E$. Fix $\pi=(-p)^{\frac{1}{p-1}}$ and for any positive rational number 
	$s$ 
	we set 
	$\pi_s=\pi^{\frac{1}{s}}$.  We will assume that $E$ is large enough to contain 
	$\pi_{d_i}$ for each $i=1,\dots, \mathbf{m}$ as well as a $p$-th root of unity $\zeta_{p}$.
	Define $\nu$ to be
	the enodmorphism $\text{id}\otimes \text{Frob}$ of $L$,
	where $\text{Frob}$ is the $p$-Frobenius automorphism of $L_0$, and 
	let $\sigma=\nu^a$. 
	For any $E$-algebra $R$ and $x \in R$, we obtain an operator $R \to R$
	sending $r \mapsto xr$. By abuse of notation, we will refer to this operator as $x$. For any ring $R$ 
	with
	valuation $v:R \to \mathbb{R}$ and any $x \in R$ with
	$v(x)>0$, we let $v_x(\cdot)$ denote the normalization
	of $v$ satisfying $v_x(x)=1$.

	\subsection{Frobenius endomorphisms} \label{subsection: Frobenius endomorphisms}
	Let $\overline{A}$ be an $\mathbb{F}_q$-algebra,
	let $A$ be an $\mathcal{O}_L$-algebra with $A\otimes_{\mathcal{O}_L} \mathbb{F}_q = \overline{A}$,
	and let $\mathcal{A}=A \otimes_{\mathcal{O}_L} L$. 
	A $p$-Frobenius endomorphism (resp. $q$-Frobenius endomorphism) of $A$ is a ring endomorphism $\nu:A \to A$ (resp. $\sigma: A \to A$)
	that extends the map $\nu$ (resp. $\sigma$) on $\mathcal{O}_L$ defined in \S \ref{section: convensions} and reduces to the $p$-th power map (resp. $q$-th power map) of $\overline{A}$. 
	Note that $\nu$ (resp. $\sigma)$ extends to a map $\nu: \mathcal{A} \to \mathcal{A}$ (resp. $\sigma: \mathcal{A} \to \mathcal{A}$), which
	we also refer to as a $p$-Frobenius endomorphism (resp. $q$-Frobenius endomorphism) of $\mathcal{A}$.

	\subsection{Definitions of local rings} \label{subsection: basic definitions}
	We begin by defining some rings
	and modules, which will be used throughout this article.
	We define $L$-algebras:
	\[  \mathcal{E} = \Bigg\{ \sum_{-\infty}^\infty a_nt^n \Bigg |  
	\begin{array}  {l}
	\text{ We have } a_n\in L, ~\lim\limits_{n\to-\infty} v_p(a_n)=\infty,   \\
	\text{ and}	~	v_p(a_n) \text{ is bounded below.} 
	\end{array}
	\Bigg \},  
	\]\[
	\mathcal{E}^\dagger = \Bigg\{ \sum_{-\infty}^\infty a_nt^n  \in 
	\mathcal{E} \Bigg |  
	\begin{array}  {l}
	\text{ There exists $m>0$ such that} \\
	v_p(a_n) \geq -mn \text{ for $n\ll 0$} 
	\end{array}
	\Bigg \}.  
	\]
	We refer to $\mathcal{E}$
	as the Amice ring over $L$ with parameter $t$. Note that 
	$\mathcal{E}^\dagger$ and 
	$\mathcal{E}$  are local fields with residue field
	$\mathbb{F}_q((t))$. The valuation $v_p$ on $L$ extends to
	the Gauss valuation on each of these fields. 
	We define $\mathcal{O}_{\mathcal{E}}$ (resp. $\mathcal{O}_{\mathcal{E}^\dagger}$)
	to be the subring of $\mathcal{E}$ (resp. $\mathcal{E}^\dagger$) consisting
	of Laurent series with coefficients in $\mathcal{O}_L$. Note that
	if $\nu:\mathcal{E}\to\mathcal{E}$ is any $p$-Frobenius endomorphism,
	we have $\mathcal{E}^{\nu=1}=E$.
	For $m \in \Z$, we define the $L$-vector space of truncated
	Laurent series: 
	\[
	\mathcal{E}^{\leq m} = \Bigg\{ \sum_{-\infty}^\infty a_nt^n  \in 
	\mathcal{E} \Bigg |  
	\begin{array}  {l}
	a_n=0\text{ for all $n>m$} \\
	\end{array}
	\Bigg \}  .
	\]
	The space $\mathcal{E}^{\leq 0}$ is a ring and  $\mathcal{E}^{\leq m}$
	is an $\mathcal{E}^{\leq 0}$-module. There is a natural projection
	$\mathcal{E} \to \mathcal{E}^{\leq m},$
	given by truncating the Laurent series.

	\section{Global setup} \label{Section global bounds}
	\label{section: global bounds}
	We now introduce the global setup used to prove
	Theorem \ref{main theorem}. We adopt the notation from \S 
	\ref{subsection: statement of main theorem}.
	
	\subsection{Mapping to $\mathbb{P}^1$}
	\label{subsection: mapping to p1}
	\begin{lemma} \label{lemma: map to p1}
		After increasing $q$, there exists
		a tamely ramified morphism $\eta:X \to \mathbb{P}_{\mathbb{F}_q}^1$,
		ramified only above $0,1$, and $\infty$, such that $\tau_1,\dots, \tau_{\mathbf{m}} \in 
		\eta^{-1}(\{0,\infty\})$ and each $P \in \eta^{-1}(1)$ has ramification index 
		$p-1$.
	\end{lemma}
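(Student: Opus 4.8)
The plan is to produce $\eta$ as a threefold composite $\eta = m \circ h \circ \psi$, where $\psi\colon X \to \mathbb{P}^1_{\mathbb{F}_q}$ is a simply branched cover and $h,m\colon\mathbb{P}^1_{\mathbb{F}_q}\to\mathbb{P}^1_{\mathbb{F}_q}$ are the two explicit self-maps
\[
h(z)=z^{q-1},\qquad m(z)=1-(2z-1)^{p-1}.
\]
These are engineered as follows. Since $\gamma^{q-1}=1$ for every $\gamma\in\mathbb{F}_q^\times$, the map $h$ carries all of $\mathbb{P}^1(\mathbb{F}_q)$ into $\{0,1,\infty\}$, with $h(0)=0$, $h(\infty)=\infty$, $h(\gamma)=1$ for $\gamma\neq 0,\infty$, and $h^{-1}(1)=\mathbb{F}_q^\times$; moreover $h$ is separable, is tamely and totally ramified over $0$ and $\infty$ (index $q-1$, prime to $p$), and is unramified elsewhere. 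The polynomial $m$ has degree $p-1$, so it is tamely and totally ramified over $\infty$; using $p\geq 3$ one has $m'=-2(p-1)(2z-1)^{p-2}\neq 0$, so $z=\tfrac12$ is its only finite critical point, with $m(\tfrac12)=1$ and ramification index $p-1$. Hence $m$ is ramified only over $\{1,\infty\}$, $m^{-1}(1)=\{\tfrac12\}$, and — because $p-1$ is even — $m(0)=m(1)=0$, so $m(\{0,1,\infty\})=\{0,\infty\}$.

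For the base map $\psi$, I would invoke the classical fact (this is exactly where the hypothesis $p\geq 3$ enters) that, after increasing $q$, the curve $X$ admits a simply branched separable morphism $\psi\colon X\to\mathbb{P}^1_{\mathbb{F}_q}$: a general rational function on $X$ of sufficiently large degree has only tame, index-$2$ ramification over pairwise distinct points. After a further increase of $q$ — which changes none of the Newton polygons at issue, nor the ramification data of $\psi$ — I may assume that every branch point of $\psi$ is $\mathbb{F}_q$-rational, so that $\text{Branch}(\psi)\subseteq\mathbb{P}^1(\mathbb{F}_q)$; recall the points $\tau_1,\dots,\tau_{\mathbf m}$, lying in $X-V$, are already $\mathbb{F}_q$-rational by the standing assumption.

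I then set $\eta=m\circ h\circ\psi$ and check the three requirements using $\text{Branch}(g_1\circ g_2)=\text{Branch}(g_1)\cup g_1(\text{Branch}(g_2))$ and the multiplicativity $e_{g_1\circ g_2}(x)=e_{g_1}(g_2(x))\,e_{g_2}(x)$ of ramification indices. Tameness is immediate, since each $e_\eta(x)$ is a product of one ramification index of $\psi$, $h$, and $m$, each prime to $p$. For the branch locus, $\text{Branch}(h\circ\psi)=\{0,\infty\}\cup h(\text{Branch}(\psi))\subseteq\{0,\infty\}\cup h(\mathbb{P}^1(\mathbb{F}_q))\subseteq\{0,1,\infty\}$, so $\text{Branch}(\eta)=\text{Branch}(m)\cup m(\text{Branch}(h\circ\psi))\subseteq\{1,\infty\}\cup m(\{0,1,\infty\})=\{0,1,\infty\}$. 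For the marked points, $\psi(\tau_i)\in\mathbb{P}^1(\mathbb{F}_q)$ forces $h(\psi(\tau_i))\in\{0,1,\infty\}$, hence $\eta(\tau_i)=m(h(\psi(\tau_i)))\in\{0,\infty\}$. Finally $\eta^{-1}(1)=\psi^{-1}(h^{-1}(\tfrac12))$, and any $z$ with $z^{q-1}=\tfrac12$ lies outside $\mathbb{P}^1(\mathbb{F}_q)$ (as $\tfrac12\neq 0,1$) and is $\neq 0,\infty$, hence lies outside $\text{Branch}(\psi)$; so for $P\in\eta^{-1}(1)$ both $\psi$ and $h$ are unramified at $P$ resp.\ $\psi(P)$, whence $e_\eta(P)=e_m(\tfrac12)=p-1$.

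The single nontrivial ingredient is the existence of the simply branched cover $\psi$, and I expect this to be the point to treat carefully: in characteristic $2$ an index-$2$ ramification point is automatically wild, so no simply branched cover exists, which is precisely why $p\geq 3$ is assumed. I would either cite the standard reference for simply branched covers in characteristic $\neq 2$ or carry out the general-position argument directly in the linear system of a large reduced effective divisor on $X$; everything else is the bookkeeping sketched above.
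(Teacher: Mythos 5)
Your proof is correct and follows essentially the same strategy as the paper: compose a simply branched cover (Fulton, Prop.\ 7.1, descended to $\mathbb{F}_q$ after increasing $q$) with $z\mapsto z^{q-1}$ to funnel all branching into $\{0,1,\infty\}$, and then a degree-$(p-1)$ map to produce the index-$(p-1)$ fiber over $1$ and push $\{0,1,\infty\}$ into $\{0,\infty\}$. The only cosmetic difference is that you package the paper's $(p-1)$-th power map together with its two auxiliary linear fractional transformations into the single polynomial $m(z)=1-(2z-1)^{p-1}$, which lets you skip the paper's extra normalizations that $\psi$ be unramified over $0,\infty$ and that $\psi(\tau_i)\neq 0,\infty$.
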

	
	\begin{proof}
		This is similar to \cite[Theor\'eme 5.6]{Saidi}. By \cite[Proposition 7.1]{Fulton-hurwitz},
		there exists a simply branched cover
		$f: X\times \Spec(\mathbb{F}_q^{alg}) \to \mathbb{P}^1_{\mathbb{F}_q^{alg}}$. 
		After increasing $q$, we may assume that $f$ descends to a map
		$f': X \to  \mathbb{P}^1_{\mathbb{F}_q}$. We also
		take $q$ large enough so that each $\tau_i$ and each branch point of $f$ is defined over $\mathbb{F}_q$.
		We may assume that $f'$
		is unramified over $0$ and $\infty$, and that
		$f(\tau_i)\neq 0,\infty$ for each $i$. This means that
		$f(\tau_i) \in \mathbb{G}_m(\mathbb{F}_q)$. After composing $f$
		with the $(q-1)$-th power map and a linear transformation, we obtain
		a map $g: X \to \mathbb{P}^1_{\mathbb{F}_q}$ that is only ramified
		at $1,2,$ and $\infty$. We then compose $g$ with the $(p-1)$-th power map to 
		obtain a map $h: X \to \mathbb{P}^1_{\mathbb{F}_q}$. Note that $h$ is only ramified 
		over $\{0,1,\infty\}$ and the ramification index of every point over $0$ is 
		$p-1$. Swap $0$ and $1$ with a linear transformation to obtain 
		$\eta$. 
	\end{proof}

	\subsection{Basic setup} \label{subsection: basic setup}
	Write
	$\mathbb{P}^1_{\mathbb{F}_q}=\Proj(\mathbb{F}_q[x_1,x_2])$
	and let $\bar{t}=\frac{x_1}{x_2}$ be a parameter at $0$. 
	Fix a morphism $\eta$ as in Lemma \ref{lemma: map to p1}. 
	For $* \in \{0,1,\infty\}$ we let
	$\{P_{*,1}, \dots, P_{*,r_*}\} = \eta^{-1}(*)$ and set $W= 
	\eta^{-1}(\{0,1,\infty\})$. Again, we will increase $q$ so that
	each $P_{*,i}$ is defined over $\mathbb{F}_q$. Fix $Q=P_{*,i} \in W$. We define $e_{Q}$ to be 
	the ramification
	index of $Q$ over $*$. From Lemma \ref{lemma: map to p1}, if $*=1$ we have 
	$e_{Q}=p-1$ 
	for 
	$1\leq i \leq r_1$, so that $r_1(p-1)=\deg(\eta)$. Also, by the Riemann-Hurwitz formula
	\begin{align}
	(g-1) + (r_0+r_1 + r_\infty) &= \deg(\eta)-g+1, \label{riemann-hurwitz eq}
	\end{align}
	where
	$g$ denotes the genus of $X$. Let $U=\mathbb{P}^1_{\mathbb{F}_q}-\{0,1,\infty \}$
	and $V=X-W$. Then $\eta: V \to U$ is a finite \`etale map of degree $\deg(\eta)$. 
	Let $\overline{B}$ (resp. $\overline{A}$) be the $\mathbb{F}_q$-algebra
	such that $V=\Spec(\overline{B})$ (resp. $U=\Spec(\overline{A})$).
	
	Let $\mathbb{P}^1_{\mathcal{O}_L}$ be the projective line
	over $\Spec(\mathcal{O}_L)$
	and let $\mathbf{P}^1_{\mathcal{O}_L}$ be the formal projective
	line over $\text{Spf}(\mathcal{O}_L)$. Let $t$ be a global parameter
	of $\mathbf{P}^1_{\mathcal{O}_L}$ lifting $\bar{t}$. By the deformation theory of 
	tame
	coverings (see \cite[Theorem 4.3.2]{Grothendieck-Murre-tame_fundamental_groups}) 
	there exists a tame cover
	$\mathbf{X} \to \mathbf{P}^1_{\mathcal{O}_L}$ 
	whose special fiber is $\eta$ and by formal GAGA (see 
	\cite[\href{https://stacks.math.columbia.edu/tag/09ZT}{Tag 
		09ZT}]{stacks-project}) 
	there exists a morphism
	of smooth curves $\mathbb{X} \to \mathbb{P}^1_{\mathcal{O}_L}$
	whose formal completion is $\mathbf{X} \to 
	\mathbf{P}^1_{\mathcal{O}_L}$.
	There exists local parameters $t_*$ and
	$u_{Q}$, which yield the diagram:
	
	\begin{equation*} 
	\begin{tikzcd}
	\widehat{\mathcal{O}}_{\mathbf{X},Q} \cong \mathcal{O}_L[[u_{Q}]] 
	\arrow[r] &\widehat{\mathcal{O}}_{X,Q} \cong 
	\mathbb{F}_q[[\bar{u}_{Q}]] \\
	\widehat{\mathcal{O}}_{\mathbf{P}^1_{\mathcal{O}_L},*} \cong 
	\mathcal{O}_L[[t_{*}]]
	\arrow[r]\arrow[u] & \widehat{\mathcal{O}}_{\mathbb{P}^1_{\mathbb{F}_q},*} \cong 
	\mathbb{F}_q [[\bar{t}_{*}]] \arrow[u].
	\end{tikzcd}
	\end{equation*}
	Our assumptions on the branching of $\eta$ allows 
	us to choose $u_{Q}$ such that $u_{Q}^{e_{Q}} = t_*$.
	We obtain an 
	$\mathcal{O}_L$-point
	of $\mathbb{P}^1_{\mathcal{O}_L}$ (resp. $\mathbb{X}$) by
	evaluating at $t_*=0$ (resp. $u_{Q}=0$):
	\begin{equation*} 
	\begin{tikzcd}
	\mathbb{X}  \arrow[d]& \Spec(\mathcal{O}_L[[u_{Q}]]) 
	\arrow[l] \arrow[d]& \Spec(\mathcal{O}_L)\arrow[l] \\
	\mathbb{P}^1_{\mathcal{O}_L} &	\Spec(\mathcal{O}_L[[t_*]]) \arrow[l]&
	\Spec(\mathcal{O}_L)\arrow[l]
	\end{tikzcd}
	\end{equation*}
	We denote the $\mathcal{O}_L$-point of $\mathbb{P}^1_{\mathcal{O}_L}$ (resp. 
	$\mathbb{X}$)
	by $[*]$ (resp. $[Q]$). After applying an automorphism of 
	$\mathbb{P}^1_{\mathcal{O}_L}$ we may assume 
	that $[0]=0$ and $[\infty]=\infty$ and $[1]=1$. Thus we
	may take $t_{0}=t$, $t_\infty = \frac{1}{t}$ and $t_1=t-1$. 
	
	Let
	$\mathbb{U} = \mathbb{P}^1_{\mathcal{O}_L} - \{ [0], [1],[\infty] \}$
	and $\mathbb{V} = \mathbb{X}-\{[R]\}_{R\in W}$, so that
	$\mathbb{\eta}: \mathbb{V} \to \mathbb{U}$ is \`etale. We define $\mathbf{U} = 
	\mathbf{P}^1_{\mathcal{O}_L} - \{ 0,1,\infty \}$
	and $\mathbf{V} = \mathbf{X}-\{R\}_{R\in W}$. Note that $\mathbf{U}$ (resp. 
	$\mathbf{V}$) is the formal completion of $\mathbb{U}$ (resp. $\mathbb{V}$). 
	Finally, 
	we
	let $\mathcal{U}^{rig}$ (resp. $\mathcal{V}^{rig}$) be the rigid analytic fiber 
	of $\mathbf{U}$  (resp. $\mathbf{V}$).

	\subsection{Local parameters and overconvergent rings}
	\label{subsection: expansion around local parameters}
	Let $A$ (resp. $\widehat{A}$ and $\widehat{\mathcal{A}}$) be the ring 
	of functions
	$\mathcal{O}_{\mathbb{U}}(\mathbb{U})$ (resp. 
	$\mathcal{O}_{\mathbf{U}}(\mathbf{U})$ 
	and $\mathcal{O}_{\mathcal{U}^{rig}}(\mathcal{U}^{rig})$) and let $B$
	(resp. $\widehat{B}$ and $\widehat{\mathcal{B}}$) be the ring of 
	functions
	$\mathcal{O}_{\mathbb{V}}(\mathbb{V})$ (resp. 
	$\mathcal{O}_{\mathbf{V}}(\mathbf{V})$ 
	and $\mathcal{O}_{\mathcal{V}^{rig}}(\mathcal{V}^{rig})$). 
	Let $\mathcal{E}_{*}$ (resp. $\mathcal{E}_{Q}$) be the Amice ring over $L$
	with parameter $t_{*}$ (resp. $u_{Q}$). By expanding functions 
	in terms of the $t_{*}$ and $u_{Q}$, we obtain the following diagrams:
	
	\begin{equation}\label{Local expansion commutative diagram}
	\begin{tikzcd}
	\widehat{B} \arrow[r]& 
	\bigoplus\limits_{R \in W} \mathcal{O}_{\mathcal{E}_{R}}  &
	\widehat{\mathcal{B}} \arrow[r] & \bigoplus\limits_{R \in W} 
	\mathcal{E}_{R} \\
	\widehat{A} \arrow[r]\arrow[u]& 
	\bigoplus\limits_{*\in\{0,1,\infty\}} \mathcal{O}_{\mathcal{E}_{*}} \arrow[u] 
	&
	\widehat{\mathcal{A}} \arrow[r]\arrow[u] & \bigoplus\limits_{*\in\{0,1,\infty\}} 
	\mathcal{E}_{*}\arrow[u].
	\end{tikzcd}
	\end{equation}
	We let $A^\dagger$ (resp. 
	$B^\dagger$) be the subring of
	$\widehat{A}$ (resp. $\widehat{B}$)
	consisting of functions that are overconvergent
	in the tube $]*[$ for each $*\in\{0,1,\infty\}$ (resp. $]R[$ for all $R \in W$).
	In particular, $B^\dagger$ fits into the following
	Cartesian diagram:
	\begin{equation} \label{overconvergent expansion diagram}
	\begin{tikzcd}
	B^\dagger \arrow[d]\arrow[r] & \bigoplus\limits_{R \in W} \arrow[d] 
	\mathcal{O}_{\mathcal{E}_{W}}^\dagger \\
	\widehat{B}\arrow[r] & \bigoplus\limits_{R \in W} \mathcal{O}_{\mathcal{E}_{W}} 
	.
	\end{tikzcd}
	\end{equation}
	Note that $A^\dagger$ (resp. $B^\dagger$) is the weak completion of $A$ (resp. 
	$B$) 
	in the sense of \cite[\S 2]{Monsky-Washnitzer-formal_cohomology1}. In particular,
	we have $A^\dagger =\mathcal{O}_L \Big < t,t^{-1}, \frac{1}{t-1} \Big>^\dagger$
	and $B^\dagger$ is an finite \'etale $A^\dagger$-algebra.
	Finally, we define $\mathcal{A}^\dagger$ (resp. $\mathcal{B}^\dagger$)
	to be $A^\dagger \otimes \Q_p$ (resp. $B^\dagger \otimes \Q_p$). 
	Then $\mathcal{A}^\dagger$ (resp. $\mathcal{B}^\dagger$) is
	equal to the functions in $\widehat{\mathcal{A}}$ (resp. $\widehat{\mathcal{B}}$) 
	that are overconvergent 
	in the tube $]*[$ for each $*\in\{0,1,\infty\}$ (resp. $]R[$ for all $R \in W$).

	\subsection{Global Frobenius and $U_p$ operators} \label{subsection Global Frobenius and Up}
	Let $\nu:\mathcal{A}^\dagger
	\to \mathcal{A}^\dagger$ be the $p$-Frobenius endomorphism that restricts to $\nu$ on $L$ 
	and sends $t$ to $t^p$.
	Let $\sigma=\nu^a$. For $* \in \{0,1,\infty\}$,
	we may extend
	$\nu$ to a $p$-Frobenius endomorphism of $\mathcal{E}_{*}^\dagger$,
	which we refer to as $\nu_{*}$. 
	In terms of the parameters $t_*$, these endomorphisms are given
	as follows:
	\[ t_0 \mapsto t_0^p,~~~t_\infty \mapsto t_\infty^p, \text{ and}~~~t_1\mapsto 
	(t_1-1)^p+1.\]
	
	Since the map $\widehat{A} \to
	\widehat{B}$ is \'etale and both rings are $p$-adically
	complete, we may extend both
	$\sigma$ and $\nu$ to maps 
	$\sigma,\nu:\widehat{B}\to\widehat{B}$. 
	This in turn gives $p$-Frobenius endomorphisms $\nu_{Q}$ of
	$\mathcal{E}_{Q}$, which make the diagrams in \eqref{Local expansion 
		commutative diagram} $p$-Frobenius equivariant. Furthermore, since 
	$\nu_{Q}$ 
	extends
	$\nu_{*}$, we know that $\nu_{Q}$ induces 
	a $p$-Frobenius endomorphism of $\mathcal{E}_{Q}^\dagger$. 	It follows from \eqref{overconvergent expansion diagram}
	that $\sigma$ and $\nu$ restrict to
	endomorphisms $\sigma, \nu: \mathcal{B}^\dagger \to \mathcal{B}^\dagger$.
	In terms
	of the parameters $u_{Q}$, the $p$-Frobenius endomorphisms 
	$\nu_{Q}$ can be described as follows:
	\begin{enumerate}
		\item When $*$ is $0$ or $\infty$, have $u_{Q}^{\nu_Q}=u_{Q}^p$, since
		$t_*^{\nu_*}=t_*^p$ and $u_{Q}^{e_Q}=t_*$. 
		\item When $*=1$, we have $u_{Q}^{\nu_Q} = \sqrt[p-1]{(u_Q^{p-1}-1)^p+1}$,
		since $t_1^{\nu_1}= (t_1-1)^p+1$ and $u_Q^{p-1}=t_1$. 
	\end{enumerate}
	Following \cite[\S 3]{van_der_Put-MW_cohomology}, there is a trace map
	$Tr_0: \mathcal{B}^\dagger \to \nu(\mathcal{B}^\dagger)$ (resp. $Tr: 
	\mathcal{B}^\dagger \to \sigma(\mathcal{B}^\dagger)$).
	We may define the $U_p$ operator on 
	$\mathcal{B}^\dagger$:
	\begin{align*}
	U_p: \mathcal{B}^\dagger &\to \mathcal{B}^\dagger \\
	x &\mapsto \frac{1}{p} \nu^{-1}(Tr_0(x)).
	\end{align*}
	Similarly, we define $U_q=\frac{1}{q}\sigma^{-1}\circ Tr$, so that
	$U_p^a=U_q$. Note that $U_p$ is $E$-linear and $U_q$ is $L$-linear.
	Both $U_p$ and $U_q$ extend to operators on $\mathcal{E}_Q^\dagger$,
	which makes the diagram \eqref{Local expansion commutative diagram} equivariant.

	\section{Local $U_p$ operators}
	\label{section: local Up operators}
	Let $\nu$ be any $p$-Frobenius endomorphism of $\mathcal{E}^\dagger$ (see \S \ref{subsection: Frobenius endomorphisms}). We define
	$U_p$ to be the map:
	\begin{align*}
	\frac{1}{p}\nu^{-1}\circ \text{Tr}_{\mathcal{E}^\dagger / 
		\nu(\mathcal{E}^\dagger)}: \mathcal{E}^\dagger \to \mathcal{E}^\dagger. 
	\end{align*}
	Note that $U_p$ is $\nu^{-1}$-semilinear (i.e. $U_p(y^\nu x)=yU_p(x)$ for 
	all $y\in 
	\mathcal{E}^\dagger$).
	In this section, we will study the growth of $U_p$ for the $p$-Frobenius
	endomorphisms of $\mathcal{E}^\dagger$ that appeared in \S \ref{subsection Global Frobenius and Up}. 
	
	\subsection{Some auxiliary rings}
	
	For any $k \in \Q$, we define the $k$-th partial valuation on 
	$\mathcal{E}$ as follows:
	for $x=\sum a_nt^n$ we set
	\[ w_k(x) = \min_{v_p(a_n) \leq k} \{n\}.\]
	These partial valuations satisfy:
	\begin{align*} 
	\begin{split}
	w_k(x+y) & \geq  \min(w_k(x),w_k(y)), \\
	w_k(xy) & \geq  \min_{i+j\leq k} (w_i(x) + w_j(y)).
	\end{split}
	\end{align*}
	Using these partial valuations we define
	\begin{align*}
	\mathcal{O}_{\mathcal{E}}^m(b) &= \Big\{ x  \in 
	\mathcal{O}_{\mathcal{E}} \Big |  
	\begin{array}  {l}
	w_k(x) \geq -km-b  \text{ for all $k\in\Q$}
	\end{array}
	\Big \},\\
	\mathcal{O}_{\mathcal{E}}^m &= \mathcal{O}_{\mathcal{E}}^m(0).
	\end{align*}
	Note that  $\mathcal{O}_{\mathcal{E}}^m$ is a ring and that
	\begin{align}\label{module multiplication eq}	
	x \in \mathcal{O}_{\mathcal{E}}^m(b_1), ~y\in \mathcal{O}_{\mathcal{E}}^m(b_2) 
	\implies xy \in \mathcal{O}_{\mathcal{E}}^m(b_1 + b_2).
	\end{align}
	An alternative definition of $\mathcal{O}_{\mathcal{E}}^m(b)$ is 
	\[
	\mathcal{O}_{\mathcal{E}}^m(b) = \Bigg\{ \sum_{-\infty}^\infty a_nt^n  
	\in 
	\mathcal{O}_{\mathcal{E}^\dagger} \Bigg |  
	\begin{array}  {l}
	\text{ For all $n<0$ we have} \\
	v_p(a_{n-b}) \geq -\frac{1}{m}n 
	\end{array}
	\Bigg \}.  
	\]

	\begin{lemma} \label{lemma: mult or divide by p growth}
		We have:
		\begin{enumerate}
			\item	If $x\in \mathcal{O}_{\mathcal{E}}^m(b)$,
			then $px \in \mathcal{O}_{\mathcal{E}}^m(b-m)$.
			\item If $x \in \mathcal{O}_{\mathcal{E}}^m(b)$
			and $p|x$, then $\frac{x}{p} \in \mathcal{O}_{\mathcal{E}}^m(b+m)$.
			\item If $t^\nu \in \mathcal{O}_{\mathcal{E}}^m(-p)$, then
			$\nu(\mathcal{O}_{\mathcal{E}}^{\frac{m}{p}}) \subset \mathcal{O}_{\mathcal{E}}^m$.
		\end{enumerate}
	\end{lemma}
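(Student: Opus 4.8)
The plan is to prove each of the three claims by unwinding the definition of $\mathcal{O}_{\mathcal{E}}^m(b)$ in terms of the partial valuations $w_k$, or equivalently via the coefficient-wise characterization given right before the lemma. Throughout, write $x = \sum a_n t^n$.

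For (1), suppose $x \in \mathcal{O}_{\mathcal{E}}^m(b)$, so $w_k(x) \geq -km - b$ for all $k \in \Q$. Since $v_p(pa_n) = v_p(a_n) + 1$, the set $\{n : v_p(pa_n) \leq k\}$ equals $\{n : v_p(a_n) \leq k-1\}$, hence $w_k(px) = w_{k-1}(x) \geq -(k-1)m - b = -km - (b-m)$. Thus $px \in \mathcal{O}_{\mathcal{E}}^m(b-m)$. Claim (2) is the mirror image: if $p \mid x$ then $v_p(a_n) \geq 1$ for every $n$ with $a_n \neq 0$, the coefficients of $x/p$ are $a_n/p$ with $v_p(a_n/p) = v_p(a_n)-1$, so $w_k(x/p) = w_{k+1}(x) \geq -(k+1)m - b = -km - (b+m)$, giving $x/p \in \mathcal{O}_{\mathcal{E}}^m(b+m)$. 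Both of these are essentially bookkeeping with the index shift in $w_k$ induced by multiplying coefficients by a power of $p$.

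For (3) I would argue as follows. Let $x \in \mathcal{O}_{\mathcal{E}}^{m/p}$, so $w_k(x) \geq -k m/p$ for all $k$. The hypothesis $t^\nu \in \mathcal{O}_{\mathcal{E}}^m(-p)$ says $w_k(t^\nu) \geq -km + p$. Since $\nu$ is a ring endomorphism fixing $L$ (so it preserves $v_p$ of coefficients when applied to constants) and $\nu(t^n) = (t^\nu)^n$, I first check that $\nu$ maps $\mathcal{O}_{\mathcal{E}}$ into itself and that for a monomial $a_k t^k$ with $v_p(a_k) = j$ we have, using the submultiplicativity $w_\ell(yz) \geq \min_{i+i' \leq \ell}(w_i(y) + w_{i'}(z))$ iterated, the bound $w_\ell(\nu(a_k t^k)) \geq w_{\ell - j}((t^\nu)^k) \geq k(w_{\ell-j+?}(t^\nu))$ — more precisely, for $k \geq 0$ one gets $w_\ell((t^\nu)^k) \geq k \cdot w_{\ell/k}(t^\nu)$-type estimates, and one must handle $k < 0$ separately since $(t^\nu)^{-1} = t^{-p}(1 + \text{higher})$, using that $t^\nu$ is a unit in $\mathcal{O}_{\mathcal{E}}$ with leading term a unit times $t^p$. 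Then summing over $n$: if $v_p(a_n) = j_n$ and $w_k(x) \geq -km/p$, the terms contributing to $w_k(\nu x)$ come from $n$ with $j_n \leq k$, and for such $n$ one has $n \geq w_k(x) \geq -km/p$, so $\nu(a_n t^n)$ contributes in degree at least roughly $n \cdot p \geq -km$ (the factor $p$ coming from $t^\nu \approx t^p$), which yields $w_k(\nu x) \geq -km$, i.e. $\nu x \in \mathcal{O}_{\mathcal{E}}^m$.

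The main obstacle is claim (3), and within it the careful treatment of negative powers of $t$: one needs that $\nu(t^{-1}) = (t^\nu)^{-1}$ still has its pole controlled of order $p$ and that the partial-valuation inequality $w_k((t^\nu)^k) \geq k \, w_{\lceil k/|n|\rceil}(t^\nu)$ (or the correct analogue) holds uniformly, which requires combining the submultiplicativity of $w_k$ with an induction on $|n|$ and the fact that $t^\nu$ is invertible in $\mathcal{O}_{\mathcal{E}}$. Claims (1) and (2) I expect to be immediate once the coefficient-index shift is spelled out. I would present (1) and (2) together in one short paragraph and devote the bulk of the argument to (3).
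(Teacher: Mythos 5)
Parts (1) and (2) are correct and essentially the only reasonable argument: multiplying or dividing by $p$ shifts the $k$ in $w_k$ by one, and the bound $-km-b$ becomes $-km-(b\mp m)$.

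Part (3) is the right overall strategy (argue term by term, use submultiplicativity of $w_k$, treat $(t^\nu)^{-1}$ via the fact that $t^{-p}t^\nu$ is a unit in $\mathcal{O}_{\mathcal{E}}$), but the final estimate as you state it has a genuine gap. You argue that $\nu(a_nt^n)$ ``contributes in degree at least roughly $np$'' and then use $n\geq w_k(x)\geq -km/p$ to get $np\geq -km$. Both steps are off. First, the correct uniform estimate is $(t^\nu)^n\in\mathcal{O}_{\mathcal{E}}^m(-np)$ for all $n\in\Z$, which only says $w_\ell((t^\nu)^n)\geq -\ell m + np$; so, writing $j_n=v_p(a_n)$, one has $w_k(\nu(a_n)(t^\nu)^n)=w_{k-j_n}((t^\nu)^n)\geq -km+j_nm+np$, which is strictly smaller than $np$ whenever $j_n<k$. (Concretely, with $p=3$, $m=3$, $t^\nu=t^3+9t^{-3}$ and $x=3t^{-1}\in\mathcal{O}_{\mathcal{E}}^1$, one computes $w_3(\nu x)=-9$, not $np=-3$.) Second, to make $-km+j_nm+np\geq -km$ you need $j_nm+np\geq 0$, i.e.\ $np\geq -j_nm$, which comes from applying the hypothesis $x\in\mathcal{O}_{\mathcal{E}}^{m/p}$ at level $j_n$ (namely $n\geq w_{j_n}(x)\geq -j_nm/p$), not at level $k$. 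Your bound $np\geq -km$ is weaker than this and, combined with the correct estimate on $(t^\nu)^n$, only gives $w_k\geq -2km+j_nm$, which is not $\geq -km$. The fix is mechanical: prove $(t^\nu)^n\in\mathcal{O}_{\mathcal{E}}^m(-np)$ uniformly (for $n\geq 0$ by iterating \eqref{module multiplication eq}; for $n<0$ write $t^\nu=t^pg$ with $g\in 1+\gothm\mathcal{O}_{\mathcal{E}}$, note $g^{-1}=\sum(1-g)^j\in\mathcal{O}_{\mathcal{E}}^m$, hence $(t^\nu)^{-1}\in\mathcal{O}_{\mathcal{E}}^m(p)$, and again iterate), then for each $n$ use the level-$j_n$ consequence of $x\in\mathcal{O}_{\mathcal{E}}^{m/p}$.
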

	\noindent We will use the next lemma in \S \ref{subsection: type 2 frobenius}
	and \S \ref{subsubsection: Local and semi-local Dwork operators}.
	\begin{lemma} \label{lemma: decomposition of frobenius action}
		Let $\nu$ be a $p$-Frobenius endomorphism of $\mathcal{E}^\dagger$
		and let $m\geq 0$ with $t^{\nu} \in \mathcal{O}_{\mathcal{E}}^m(-p)$.
		Then we have
		\begin{align*}
			\mathcal{O}_{\mathcal{E}}^m &=\bigoplus_{i=0}^{p-1} t^i \nu(\mathcal{O}_{\mathcal{E}}^{\frac{m}{p}}), \\
			\mathcal{O}_{\mathcal{E}}^m(p) &= \nu(\mathcal{O}_{\mathcal{E}}^{\frac{m}{p}}(1))
			\oplus \bigoplus_{i=1}^{p-1} t^{-i} \nu(\mathcal{O}_{\mathcal{E}}^{\frac{m}{p}}).
		\end{align*}
	\end{lemma}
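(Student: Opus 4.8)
The plan is to prove both direct-sum decompositions as consequences of Lemma \ref{lemma: mult or divide by p growth}(3) together with the fact that $1,t,\dots,t^{p-1}$ form a basis for $\mathcal{E}^\dagger$ (equivalently $\mathcal{O}_{\mathcal{E}^\dagger}$, and hence $\mathcal{E}$) over $\nu(\mathcal{E}^\dagger)$ — the same basis one uses to define the trace map $\mathrm{Tr}_{\mathcal{E}^\dagger/\nu(\mathcal{E}^\dagger)}$. The abstract direct-sum statement $\mathcal{E} = \bigoplus_{i=0}^{p-1} t^i \nu(\mathcal{E})$ is standard: $\nu$ is finite flat of degree $p$ (it lifts the $p$-power Frobenius of $\mathbb{F}_q((t))$, under which $1,t,\dots,t^{p-1}$ is a basis over $\mathbb{F}_q((t^p))$), so by Nakayama/completeness $1,t,\dots,t^{p-1}$ is an $\mathcal{O}_{\mathcal{E}}$-basis over $\nu(\mathcal{O}_{\mathcal{E}})$. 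The work is entirely in pinning down the growth conditions, i.e. in showing the decomposition respects the filtration by the $\mathcal{O}_{\mathcal{E}}^m(b)$.

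For the first identity, one inclusion is immediate: by the hypothesis $t^\nu \in \mathcal{O}_{\mathcal{E}}^m(-p)$ and Lemma \ref{lemma: mult or divide by p growth}(3), $\nu(\mathcal{O}_{\mathcal{E}}^{m/p}) \subseteq \mathcal{O}_{\mathcal{E}}^m$; since $t^i \in \mathcal{O}_{\mathcal{E}}^m(0)$ and $\mathcal{O}_{\mathcal{E}}^m$ is a ring closed under the module operations \eqref{module multiplication eq}, we get $\bigoplus_{i=0}^{p-1} t^i\nu(\mathcal{O}_{\mathcal{E}}^{m/p}) \subseteq \mathcal{O}_{\mathcal{E}}^m$. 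For the reverse inclusion, take $x \in \mathcal{O}_{\mathcal{E}}^m$, write $x = \sum_{i=0}^{p-1} t^i \nu(y_i)$ with $y_i \in \mathcal{E}$ (using the abstract decomposition), and check $y_i \in \mathcal{O}_{\mathcal{E}}^{m/p}$. I would do this coefficient-by-coefficient: writing $y_i = \sum_n b_{i,n} t^n$, the term $t^i\nu(y_i)$ contributes $\sum_n b_{i,n} t^{i}\nu(t)^{n}$, and since $\nu(t) = t^p(1 + p(\cdots))$ the ``leading'' monomial of $t^i\nu(t^n)$ is $t^{i+pn}$ with coefficient $b_{i,n}$. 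As $i$ ranges over $0,\dots,p-1$, the residues $i + pn \bmod p$ are distinct, so the coefficient of $t^{i+pn}$ in $x$ determines $b_{i,n}$ modulo contributions from $b_{i,n'}$ with $n' > n$ (the higher-order tail of $\nu$) and from the bounded-below tails of the other $y_j$. Running the estimate by downward induction on $n$ (which terminates because everything lies in $\mathcal{O}_{\mathcal{E}}$ and the $w_k$ are eventually controlled), the growth condition $w_k(x) \geq -km$ translates into $v_p(b_{i,n}) \geq -\frac{1}{m/p} n = -\frac{p}{m} n$ for $n < 0$, i.e.\ $y_i \in \mathcal{O}_{\mathcal{E}}^{m/p}$. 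This matching of residues $\bmod\ p$ against the slope $m$ versus $m/p$ is where $\nu(t)$ having leading term $t^p$ is used essentially.

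The second identity is proved the same way, but the roles of the ``distinguished'' index shift. The point is that an element of $\mathcal{O}_{\mathcal{E}}^m(p)$ is allowed a slightly larger pole: in the notation of the alternative description, $v_p(a_{n-p}) \geq -\frac{1}{m} n$, i.e.\ the constraint on the coefficient of $t^{n}$ is governed by $n + p$. Under $\nu$, the slope-$m/p$ condition on $y_i$ pulls back to a slope-$m$ condition on $t^i\nu(y_i)$, but the extra room ``$b=p$'' on the left is exactly one Frobenius-block wide, so it can be absorbed only into the $i=0$ summand as $\nu(\mathcal{O}_{\mathcal{E}}^{m/p}(1))$ — giving $y_0$ one unit of extra pole in the normalization where $\nu(t)$ has degree $p$ — while the summands $i = 1,\dots,p-1$ must be shifted to $t^{-i}\nu(\mathcal{O}_{\mathcal{E}}^{m/p})$ so that their leading monomials $t^{-i+pn}$ still hit every residue class $\bmod\ p$ and still land inside $\mathcal{O}_{\mathcal{E}}^m(p)$. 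Concretely, $t^{-i} \in \mathcal{O}_{\mathcal{E}}^m(i)$ and $i \leq p-1 < p$, so $t^{-i}\nu(\mathcal{O}_{\mathcal{E}}^{m/p}) \subseteq \mathcal{O}_{\mathcal{E}}^m(p)$; combined with $\nu(\mathcal{O}_{\mathcal{E}}^{m/p}(1)) \subseteq \mathcal{O}_{\mathcal{E}}^m(p)$ (by Lemma \ref{lemma: mult or divide by p growth}(3) applied with offset, or directly from the alternative description) one inclusion follows, and the reverse is the same coefficient-chasing argument as before. The main obstacle, in both parts, is purely bookkeeping: controlling the interaction between the higher-order correction terms in $\nu(t) = t^p + p(\cdots)$ and the partial valuations $w_k$, so that the inductive estimate on the $b_{i,n}$ closes up with the right slope $m/p$; I expect this to be the bulk of the argument and the place where one must be careful about whether the correction terms of $\nu$ themselves lie in $\mathcal{O}_{\mathcal{E}}^{m/p}$ or only in $\mathcal{O}_{\mathcal{E}}$.
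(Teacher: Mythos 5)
Your overall strategy differs genuinely from the paper's. You propose to first produce the abstract decomposition $x=\sum_{i=0}^{p-1}t^i\nu(y_i)$ over $\mathcal{O}_{\mathcal{E}}$ (by Nakayama/completeness from the fact that $1,t,\dots,t^{p-1}$ is a $\nu(\mathcal{O}_{\mathcal{E}})$-basis) and then, in a second step, verify that each $y_i$ lands in $\mathcal{O}_{\mathcal{E}}^{m/p}$. The paper instead builds the decomposition and the growth estimate simultaneously: it constructs $y_{n,i}\in\mathcal{O}_{\mathcal{E}}^{m/p}$ by induction on $n$ such that $\sum t^i y_{n,i}^\nu\equiv x\pmod{\pi_\circ^{n+1}\mathcal{O}_{\mathcal{E}^\dagger}}$, with the base case and each step resting on the classical Frobenius decomposition of $\mathbb{F}_q((t))$ and the observation that the $\pi_\circ^{n+1}$-error, being in $\mathcal{O}_{\mathcal{E}}^m$, has pole order at most $(n+1)m/e$. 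Splitting the existence from the estimate is a legitimate alternative organization, and the Nakayama argument for existence is fine.

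The gap is in your second step. You describe it as a ``downward induction on $n$'' over the $t$-degree of the coefficients $b_{i,n}$ of $y_i$, matching each $b_{i,n}$ against the coefficient of $t^{i+pn}$ in $x$ up to correction terms. But the system of equations you write down is not triangular in the $t$-degree. With $u=t^{-p}\nu(t)\in\mathcal{O}_{\mathcal{E}}$, one has $t^i\nu(y_i)=t^i\sum_{n'}b_{i,n'}t^{pn'}u^{n'}$, and since $u$ (hence $u^{n'}$, including for $n'<0$ after inverting the unit $u$) is a genuine two-sided Laurent series in $\mathcal{O}_{\mathcal{E}}$, the coefficient of $t^{i+pn}$ receives contributions from $b_{i,n'}$ for $n'$ on \emph{both} sides of $n$, not only $n'>n$ as your description assumes. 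So there is no well-defined starting point for a downward induction on $n$, and the estimate does not ``close up'' in the degree direction. What makes the system tractable is that $u\equiv 1\pmod{\pi_\circ}$, so the off-diagonal corrections are $\pi_\circ$-adically small; the correct inductive variable is therefore the $\pi_\circ$-adic precision (equivalently the partial-valuation index $k$), not the degree $n$. This is precisely the induction the paper runs, and the partial valuations $w_k$ are designed to make that induction interact correctly with the growth condition: the growth bound on the $\pi_\circ^{n+1}$-error $\bar r_n$ is what lets one split $\bar r_n=\sum t^i\bar r_{n,i}^p$ with $v_t(\bar r_{n,i})\geq m(n+1)/(ep)$ and lift into $\mathcal{O}_{\mathcal{E}}^{m/p}$. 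You flag that ``controlling the interaction between the higher-order correction terms in $\nu(t)$ and the partial valuations'' is the bulk of the work; that is right, but the bookkeeping scheme you propose will not carry it out, and replacing it by a $\pi_\circ$-adic approximation essentially recovers the paper's proof.
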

	\begin{proof}
		We prove the first decomposition only, as the second one is almost identical. 
		Recall that $\pi_\circ$ is a uniformizing element of $\mathcal{O}_L$ and
		$v_p(\pi_\circ)=\frac{1}{e}$. Let $x \in \mathcal{O}_{\mathcal{E}}^m$.
		We will prove inductively that there exists $y_{n,i} \in \mathcal{O}_{\mathcal{E}}^{\frac{m}{p}}$ with
		\begin{align} \label{what we are proving}
			\sum_{i=0}^{p-1} t^iy_{n,i}^\nu \equiv x \mod \pi_\circ^{n+1} \mathcal{O}_{\mathcal{E}^\dagger}.
		\end{align}
		For $n=0$, write
		\begin{align*}
			x \equiv \sum_{i=0}^{p-1} t^i\overline{y}_{0,i}^p \mod \pi_\circ \mathcal{O}_{\mathcal{E}^\dagger},
		\end{align*}
		where $\overline{y}_{0,i}\in \mathbb{F}_q[[t]]$. We take $y_{0,i}$
		to be a lift of $\overline{y}_{0,i}$ contained in $\mathcal{O}_{\mathcal{E}}^{\frac{m}{p}}$.
		Now let $n>0$ and assume that there exists $y_{n,i}\in \mathcal{O}_{\mathcal{E}}^{\frac{m}{p}}$ that satisfy \eqref{what we are proving}.
		By Lemma \ref{lemma: mult or divide by p growth} and \eqref{module multiplication eq} we see that $\sum_{i=0}^{p-1} t^i y_{n,i}^\nu\in
		\mathcal{O}_{\mathcal{E}}^m$. We have
		\begin{align}\label{Frobenius decomposiiton eq1}
			x-\sum_{i=0}^{p-1} t^i y_{n,i}^\nu \equiv \pi_\circ^{n+1}\overline{r}_n, 
			\mod \pi_\circ^{n+1} \mathcal{O}_{\mathcal{E}^\dagger}.
		\end{align}
		where $\overline{r}_n \in \mathbb{F}_q((t))$. The left side of \eqref{Frobenius decomposiiton eq1} lies in 
		$\mathcal{O}_{\mathcal{E}}^m$. Thus, we have 
		$v_t(\overline{r}_n)\geq \frac{(n+1)m}{e}$. As in the $n=0$ case,
		we write
		\begin{align*}
			\overline{r}_n&=\sum_{i=0}^{p-1}t^i\overline{r}_{n,i}^p,
		\end{align*}
		and note that $v_t(\overline{r}_{n,i}) \geq \frac{m(n+1)}{ep}$. Then
		by Lemma \ref{lemma: mult or divide by p growth}, there
		exists a lift $r_{n,i}$ of $\overline{r}_{n,i}$
		such that $\pi_\circ^{n+1}r_{n,i}  $ is contained $\mathcal{O}_{\mathcal{E}}^{\frac{m}{p}}$.
		We take $y_{n+1,i}=y_{n,i}+\pi_\circ^{n+1}r_{n,i}$.
	\end{proof}

	\subsection{Type 1: $t \mapsto t^p$}
	
	First consider the $p$-Frobenius endomorphism $\nu:\mathcal{E}^\dagger \to \mathcal{E}^\dagger$ 
	sending $t$ to $t^p$. 
	In this case, we compute:
	\begin{align*}
	U_p(t^i) &= \begin{cases}
	0 & p \nmid i \\
	t^{\frac{i}{p}} & p | i 
	\end{cases}.
	\end{align*}
	Thus, for $m\geq 0$ we have:
	\begin{align}
	U_p(\mathcal{O}_\mathcal{E}^m) \subset \mathcal{O}_\mathcal{E}^{\frac{m}{p}}. 
	\label{basic p-frob 
		growth}
	\end{align}
	\begin{proposition} \label{type 1 operator property}
		Let $\nu$ be the $p$-Frobenius endomorphism of $\mathcal{E}^\dagger$
		that sends $t$ to $t^p$. Let $s$ be a positive rational number.
		Recall that $\pi_s=\pi^{\frac{1}{s}}$, so that $v_p(\pi_s)=\frac{1}{s(p-1)}$,
		and we assume $\pi_s \in \mathcal{O}_L$. Let $x \in \mathcal{O}_{\mathcal{E}}^{s(p-1)}$. Then
		\begin{align*}
		U_p(\pi_s^{pn}t^{-n} x) \in 
		\pi_s^{(p-1)n} \mathcal{O}_{\mathcal{E}}^{\frac{s(p-1)}{p}}.
		\end{align*}
	\end{proposition}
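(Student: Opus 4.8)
The plan is to combine the explicit description of $U_p$ for the Frobenius $t\mapsto t^p$ with its $\nu^{-1}$-semilinearity, arranged so that the scalar $\pi_s^{pn}$ is pushed \emph{through} $U_p$ rather than absorbed into the integrality filtration $\mathcal{O}_{\mathcal{E}}^{m}(\,\cdot\,)$. Throughout I set $m=s(p-1)$, so that the target is $\pi_s^{(p-1)n}\mathcal{O}_{\mathcal{E}}^{m/p}$.

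First I would record two elementary inclusions. Writing a general series as $\sum a_jt^j$, the computation of $U_p(t^i)$ above together with the identity $U_p(y^\nu x)=yU_p(x)$ gives $U_p\big(\sum a_jt^j\big)=\sum_j\nu^{-1}(a_{pj})t^j$. Since $\nu$ is an isometry of $L$ fixing $\mathcal{O}_L$, the same bookkeeping that yields $U_p(\mathcal{O}_{\mathcal{E}}^{m})\subset\mathcal{O}_{\mathcal{E}}^{m/p}$ in fact shows, for every $b\in\Q$,
\[
U_p\big(\mathcal{O}_{\mathcal{E}}^{m}(b)\big)\subset\mathcal{O}_{\mathcal{E}}^{m/p}(b/p),
\]
because $pj\ge -km-b$ forces $j\ge -k(m/p)-(b/p)$ and $\nu^{-1}$ preserves $v_p$ and $\mathcal{O}_L$. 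Next, using the partial valuations one has $w_k(t^{-n}x)=w_k(x)-n\ge -km-n$, and the coefficients remain in $\mathcal{O}_L$, so $t^{-n}x\in\mathcal{O}_{\mathcal{E}}^{m}(n)$. Combining the two inclusions gives $U_p(t^{-n}x)\in\mathcal{O}_{\mathcal{E}}^{m/p}(n/p)$.

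Now I would introduce the scalar $\lambda:=\pi_s^{-(p-1)n}\,\nu^{-1}(\pi_s^{pn})$. Since $\nu^{-1}$ preserves $v_p$ and $v_p(\pi_s)=\tfrac{1}{s(p-1)}$,
\[
v_p(\lambda)=-\tfrac{(p-1)n}{s(p-1)}+\tfrac{pn}{s(p-1)}=\tfrac{n}{s(p-1)}=\tfrac{n}{m}\ge 0,
\]
so $\lambda\in\mathcal{O}_L$. Because $\big(\nu^{-1}(\pi_s^{pn})\big)^\nu=\pi_s^{pn}$, semilinearity of $U_p$ gives
\[
U_p\big(\pi_s^{pn}t^{-n}x\big)=\nu^{-1}(\pi_s^{pn})\,U_p(t^{-n}x)=\pi_s^{(p-1)n}\cdot\lambda\, U_p(t^{-n}x).
\]
Multiplying an element of $\mathcal{O}_{\mathcal{E}}^{m/p}(n/p)$ by $\lambda$ keeps it in $\mathcal{O}_{\mathcal{E}}$ (as $\lambda\in\mathcal{O}_L$) and shifts the filtration index by $v_p(\lambda)\cdot\tfrac{m}{p}=\tfrac{n}{p}$, so $\lambda\,U_p(t^{-n}x)\in\mathcal{O}_{\mathcal{E}}^{m/p}(n/p-n/p)=\mathcal{O}_{\mathcal{E}}^{m/p}$. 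Hence $U_p(\pi_s^{pn}t^{-n}x)\in\pi_s^{(p-1)n}\mathcal{O}_{\mathcal{E}}^{\frac{s(p-1)}{p}}$, which is the claim.

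The estimate is essentially bookkeeping, but there is one genuine subtlety I would flag. One must resist treating $\pi_s^{pn}$ as a multiplication by a power of $p$ and absorbing it into the ``$b$''-argument: doing that would only land the output in $\mathcal{O}_{\mathcal{E}}^{m/p}\!\big(-(p-1)n/p\big)$, which is strictly larger than $\pi_s^{(p-1)n}\mathcal{O}_{\mathcal{E}}^{m/p}$ — the discrepancy is exactly the integrality of the coefficients of nonnegative powers of $t$. Keeping $\pi_s^{pn}$ as an honest scalar and using that $\nu$ is an isometry (so that the correction $\nu^{-1}(\pi_s^{pn})/\pi_s^{(p-1)n}$ has \emph{positive} valuation) is what makes the bound tight; this is also where $n\ge 0$ enters, via $\lambda\in\mathcal{O}_L$. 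An equally valid alternative, if one prefers to avoid the $(b)$-filtration machinery, is to do the same computation coefficientwise: expand $x=\sum a_jt^j$, note that the $k$-th coefficient of $\pi_s^{-(p-1)n}U_p(\pi_s^{pn}t^{-n}x)$ has valuation $\tfrac{n}{s(p-1)}+v_p(a_{pk+n})$, and check the defining inequalities of $\mathcal{O}_{\mathcal{E}}^{s(p-1)/p}$ by splitting on the sign of $pk+n$.
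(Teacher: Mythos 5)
Your argument is correct and is essentially the paper's proof in slightly expanded form: both pull the scalar through $U_p$ via $\nu^{-1}$-semilinearity and then invoke the basic growth estimate $U_p(\mathcal{O}_{\mathcal{E}}^m)\subset\mathcal{O}_{\mathcal{E}}^{m/p}$. The paper factors $\pi_s^{pn}=\pi_s^{(p-1)n}\cdot\pi_s^{n}$ and absorbs $\pi_s^{n}$ directly into $t^{-n}x$ to land in $\mathcal{O}_{\mathcal{E}}^{s(p-1)}$, whereas you track the same cancellation via the $(b)$-shifted filtration and the correction scalar $\lambda$ --- this is the same bookkeeping in a different guise.
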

	\begin{proof}
		We have $U_p(\pi_s^{pn}t^{-n}x)=\pi_s^{(p-1)n}U_p(\pi_s^nt^{-n}x)$. As 
		$\pi_s^nt^{-n}x \in  \mathcal{O}_{\mathcal{E}}^{s(p-1)}$, the result follows 
		from \eqref{basic p-frob growth}.
	\end{proof}
	\subsection{Type 2: $t \mapsto \sqrt[p-1]{(t^{p-1}+1)^p-1}$}
	\label{subsection: type 2 frobenius}
	Next, consider the $p$-Frobenius endomorphism $\nu:\mathcal{E}^\dagger \to \mathcal{E}^\dagger$ that
	sends
	$t$ to $\sqrt[p-1]{(t^{p-1}+1)^p-1}$.
	Let $u=t^{p-1}$ and let $\mathcal{E}_0$ be the Amice ring with parameter $u$. Then
	$\mathcal{E}$ is a Galois extension of $\mathcal{E}_0$ with 
	$Gal(\mathcal{E}/\mathcal{E}_0) \cong \Z/(p-1)\Z$. Each summand of
	the decomposition 
	\begin{align}
	\mathcal{E} &= \bigoplus_{i=1}^{p-1} t^{-i} \mathcal{E}_0 \label{type 2 
		decomposition}
	\end{align}
	is a $\chi$-eigenspace for some $\chi:Gal(\mathcal{E}/\mathcal{E}_0) \to 
	\mu_{p-1}$. 
	Note that $\nu$ commutes with $Gal(\mathcal{E}/\mathcal{E}_0)$. Thus, the 
	decomposition \eqref{type 2 decomposition} is preserved by $\nu$ and 
	$U_p$. Define
	$\mathcal{B}=\mathcal{E}_0 \cap \mathcal{O}_\mathcal{E}^{p-1}$, so that
	\begin{align*}
	\mathcal{B} = \Bigg \{ \sum_{-\infty}^\infty a_n u^n \Bigg | a_n \in 
	\mathcal{O}_L \text{ and } v_p(a_n)\geq \max\{0,-n\} \Bigg \}.
	\end{align*}
	We then define the space
	\begin{align}
	\mathcal{A} &= \bigoplus_{i=1}^{p-1} t^{-i} \mathcal{B}. \label{definition of A}
	\end{align}
	We may describe an $\mathcal{O}_L$-basis of $\mathcal{A}$ as follows: 
	consider the 
	sequence
	\begin{align*}
	a(n) = \Big \lfloor \frac{n-1}{p-1} \Big \rfloor \text{ for }n\geq 1.
	\end{align*}
	Then $\{\dots, t^2, t^1, 1, 
	p^{a(1)}t^{-1},p^{a(2)}t^{-2},\dots\}$ 
	is an $\mathcal{O}_L$-basis of $\mathcal{A}$. 
	\begin{proposition} \label{proposition: type 2 Frobenius}
		Let $\nu$ be the $p$-Frobenius endomorphism of $\mathcal{E}^\dagger$ that sends
		$t$ to $\sqrt[p-1]{(t^{p-1}+1)^p-1}$. For all $n\in \Z_{\geq 0}$ and $0\leq j \leq p-1$, we have
		\begin{align*}
		U_p(p^{a(j+np)}t^{-(j+np)}) &\in p^n \mathcal{A}, \\
		U_p(\mathcal{A}) &\subset \mathcal{A}. 
		\end{align*}
	\end{proposition}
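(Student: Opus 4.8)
The plan is to reduce the whole statement to an estimate for $U_p$ on the ``horizontal'' subring $\mathcal{E}_0$ and then run a Dwork-type computation there, with Lemma \ref{lemma: decomposition of frobenius action} as the engine. Write $k=j+np$, so $n=\lfloor k/p\rfloor$ and the first claim is $U_p(p^{a(k)}t^{-k})\in p^{\lfloor k/p\rfloor}\mathcal A$. Since $\nu$ commutes with $Gal(\mathcal{E}/\mathcal{E}_0)$, the decomposition $\mathcal{E}=\bigoplus_{i=1}^{p-1}t^{-i}\mathcal{E}_0$ is preserved by $\nu$, by the trace, and hence by $U_p$; in particular $U_p(\mathcal{E}_0)\subseteq\mathcal{E}_0$. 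Put $h=t^\nu/t$; computing $h^{p-1}=\tfrac{(u+1)^p-1}{u}=u^{p-1}(1+z)$ with $z=\sum_{k=1}^{p-1}\binom{p}{k}u^{-k}$ shows $h=u(1+z)^{1/(p-1)}\in\mathcal{O}_{\mathcal{E}_0}$, so $h^i=u^i\sum_{s\ge0}c_su^{-s}$ with $c_0=1$ and $v_p(c_s)\ge\lceil s/(p-1)\rceil$ (the $u^{-s}$-coefficient of $(1+z)^{i/(p-1)}$ only gets contributions from $z^j$ with $j\ge\lceil s/(p-1)\rceil$, and $v_p(z)\ge1$). Because $(t^{-i})^\nu=t^{-i}h^{-i}$ and $U_p$ is $\nu^{-1}$-semilinear, $U_p(t^{-i}y)=t^{-i}U_p(h^iy)$ for $y\in\mathcal{E}_0$. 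Writing $p^{a(k)}t^{-k}=t^{-i}(p^{\ell}u^{-\ell})$ with $k=i+(p-1)\ell$, $1\le i\le p-1$, $\ell=a(k)$, and recalling $\mathcal{A}=\bigoplus_{i=1}^{p-1}t^{-i}\mathcal{B}$ with $\mathcal{B}$ having $\mathcal{O}_L$-basis $\{u^\ell\}_{\ell\ge0}\cup\{p^\ell u^{-\ell}\}_{\ell\ge1}$, both assertions reduce to showing $U_p(h^i\cdot p^\ell u^{-\ell})\in p^{\lfloor k/p\rfloor}\mathcal{B}$ and $U_p(h^iu^\ell)\in\mathcal{B}$ for $1\le i\le p-1$, $\ell\ge0$.

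Next I would isolate the clean input $U_p(\mathcal{O}_{\mathcal{E}_0}^p)\subseteq\mathcal{B}$. One checks on coefficients that $u^\nu=(u+1)^p-1\in\mathcal{O}_{\mathcal{E}_0}^p(-p)$ (the coefficient $\binom pk$ of $u^k$ for $1\le k\le p-1$ has $v_p=1\ge(p-k)/p$, and the top term is $u^p$). Lemma \ref{lemma: decomposition of frobenius action} with $\mathcal{E}_0$, parameter $u$, and $m=p$ then gives $\mathcal{O}_{\mathcal{E}_0}^p=\bigoplus_{i=0}^{p-1}u^i\nu(\mathcal{O}_{\mathcal{E}_0}^1)$, and $\mathcal{O}_{\mathcal{E}_0}^1$ is exactly $\mathcal{B}$. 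Since $(u+1)^\nu=(u+1)^p$, the conjugates of $u$ over $\nu\mathcal{E}_0$ are $\zeta_p^\bullet(u+1)-1$, whence $U_p((u+1)^{j})=(u+1)^{j/p}$ if $p\mid j$ and $=0$ otherwise; expanding $u^i=((u+1)-1)^i$ yields $U_p(u^i)=(-1)^i$ for $0\le i\le p-1$. Therefore, writing $x=\sum_{i=0}^{p-1}u^i\nu(y_i)\in\mathcal{O}_{\mathcal{E}_0}^p$ with $y_i\in\mathcal{B}$ and using semilinearity, $U_p(x)=\sum_i(-1)^iy_i\in\mathcal{B}$; replacing $x$ by $p^Nx$ gives $U_p(p^N\mathcal{O}_{\mathcal{E}_0}^p)\subseteq p^N\mathcal{B}$.

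It then remains to show $h^i\cdot p^\ell u^{-\ell}\in p^N\mathcal{O}_{\mathcal{E}_0}^p$ for $N=\lfloor k/p\rfloor$, and the easier $h^iu^\ell\in\mathcal{O}_{\mathcal{E}_0}^p$. From the first step $h^i\cdot p^\ell u^{-\ell}=\sum_{s\ge0}p^\ell c_su^{\,i-\ell-s}$, a convergent sum in $\mathcal{E}_0$, and I would verify the containment monomial by monomial. One has $\ell-N=\lceil(\ell-i)/p\rceil\ge0$, so the claim for the $s$-th monomial is $p^{\ell-N}c_su^{\,i-\ell-s}\in\mathcal{O}_{\mathcal{E}_0}^p$: this is clear when $i-\ell-s\ge0$, and when $i-\ell-s=-r<0$ it amounts to $\lceil(\ell-i)/p\rceil+v_p(c_s)\ge(\ell+s-i)/p$, which holds because $(\ell+s-i)/p\le\lceil(\ell-i)/p\rceil+s/p$ and $v_p(c_s)\ge\lceil s/(p-1)\rceil\ge s/p$. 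Since $p^N\mathcal{O}_{\mathcal{E}_0}^p$ is closed under convergent sums, the containment follows, and combined with the second step this gives $U_p(p^{a(k)}t^{-k})=t^{-i}U_p(h^i\cdot p^\ell u^{-\ell})\in p^N\mathcal{A}$, the first assertion. The identical monomial check (with $N=0$) gives $h^iu^\ell\in\mathcal{O}_{\mathcal{E}_0}^p$, hence $U_p(h^iu^\ell)\in\mathcal{B}$; together these show $U_p(b)\in\mathcal{A}$ for every $\mathcal{O}_L$-basis element $b$ of $\mathcal{A}$, and $U_p(\mathcal{A})\subseteq\mathcal{A}$ follows by $\mathcal{O}_L$-linearity and continuity.

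The geometric reductions are cosmetic; the genuine work is the bookkeeping in the last step — matching the factor $p^{a(k)}=p^\ell$ carried by the basis element against the target $p^{\lfloor k/p\rfloor}$, the mechanism being that the tail coefficients $c_s$ of $(1+z)^{i/(p-1)}$ acquire just enough extra $p$-divisibility ($v_p(c_s)\ge\lceil s/(p-1)\rceil$) to pay for the extra pole order $u^{-s}$ they introduce. Getting the constant in Lemma \ref{lemma: decomposition of frobenius action} right, namely $m=p$ so that its output ring is exactly $\mathcal{O}_{\mathcal{E}_0}^1=\mathcal{B}$ (rather than the a priori natural $\mathcal{O}_{\mathcal{E}_0}^{p-1}$, which is too large), is what makes the estimate tight enough to close.
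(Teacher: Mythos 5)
Your argument is correct, and it takes a genuinely different route from the paper's. The paper proves Corollary \ref{corollary: basic growth of Up} by estimating the elementary symmetric functions $a_1,\dots,a_p$ of the conjugates of $t^{-1}$ over $\nu(\mathcal{E})$: it applies Lemma \ref{lemma: decomposition of frobenius action} to $\mathcal{E}$ with $m=p(p-1)$, uses the congruence $Q(X)\equiv X^p-t^p\pmod p$ to extract a factor of $p$ from $a_1,\dots,a_{p-1}$, pins each $a_i$ into the eigenspace $t^{-i}\mathcal{E}_0$ by a Galois argument, and then runs Newton's identities to control the power sums $U_p(t^{-k})$. You instead pass to $\mathcal{E}_0$ from the outset via the twist $h=t^\nu/t$, apply the decomposition lemma to $\mathcal{E}_0$ with $m=p$ to get $\mathcal{O}_{\mathcal{E}_0}^p=\bigoplus u^i\nu(\mathcal{B})$, and replace the Newton-identities machinery entirely with the exact evaluation $U_p(u^i)=(-1)^i$, which drops out of the fact that $u+1$ is a Kummer generator (its $\nu$-conjugates are $\zeta_p^{\bullet}(u+1)$). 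The burden of proof is then shifted from the symmetric functions $a_i$ to the expansion coefficients $c_s$ of $h^i$, where the bound $v_p(c_s)\geq\lceil s/(p-1)\rceil$ plays exactly the role that $p\mid a_i$ and the Galois eigenspace containment play in the paper. Your route is somewhat more explicit (it produces a closed formula for $U_p$ on $u$-powers) and avoids the recursive estimate; the paper's route is more symmetric-function-theoretic. The end estimates agree. Two small notational remarks: the case $k=j+np=0$ (where your parametrization $k=i+(p-1)\ell$ with $1\leq i\leq p-1$ does not apply) should be disposed of separately via $U_p(1)=1$; and the closing appeal to ``$\mathcal{O}_L$-linearity'' of $U_p$ should read $\nu^{-1}$-semilinearity over $\mathcal{O}_L$, which still preserves $\mathcal{A}$ since $\nu^{-1}(\mathcal{O}_L)=\mathcal{O}_L$.
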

	
	\noindent The remainder of this section is dedicated to proving Proposition 
	\ref{proposition: 
		type 2 Frobenius}.
	
	\begin{lemma} \label{lemma: growth of frobenius}
		We have $t^\nu \in 
		\mathcal{O}_{\mathcal{E}}^{p(p-1)}(-p)$. 
	\end{lemma}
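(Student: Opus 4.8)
The plan is to analyze the $p$-Frobenius endomorphism $\nu$ directly on the level of the parameter $u = t^{p-1}$, where the formula simplifies. By construction $t^\nu = \sqrt[p-1]{(t^{p-1}+1)^p - 1}$, so $u^\nu = (t^\nu)^{p-1} = (u+1)^p - 1 = u^p + \binom{p}{1}u^{p-1} + \cdots + \binom{p}{p-1}u$. Thus $u^\nu = u^p + p u g(u)$ for some $g(u) \in \mathcal{O}_L[u]$ of degree $p-2$, in fact $g \in \Z[u]$ since all the intermediate binomial coefficients $\binom{p}{k}$ for $1 \le k \le p-1$ are divisible by $p$. First I would record this polynomial identity and note that $t^\nu = t \cdot h(u)$ where $h(u)^{p-1} = (u+1)^p - 1)/u \in 1 + u\Z[u]$, so $h(u) \in 1 + u\mathcal{O}_L[[u]]$ after extracting the $(p-1)$-th root (which exists since $E$ contains the needed roots of unity and the constant term is $1$; use the binomial series, which converges $p$-adically integrally here because $v_p$ of the constant term is $0$ and we are taking a root of a $1$-unit).

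Next I would estimate the partial valuations $w_k$ of $t^\nu$ using the description of $\mathcal{O}_{\mathcal{E}}^m(b)$ in terms of coefficients. Since $t^\nu$ has a Taylor expansion in $\mathcal{O}_L[[t]]$ (it is integral with no polar part — it is literally a power series in $t$ with $\mathcal{O}_L$-coefficients), we have $w_k(t^\nu) \ge 0 \ge -km$ for all $k \ge 0$ automatically; the content of the claim $t^\nu \in \mathcal{O}_{\mathcal{E}}^{p(p-1)}(-p)$ is the shift by $b = -p$, i.e. that $v_p$ of the coefficient of $t^n$ in $t^\nu$, after index-shifting by $-p$... wait — more carefully, since $\mathcal{O}_{\mathcal{E}}^m(b)$ for $b < 0$ is a \emph{smaller} space than $\mathcal{O}_{\mathcal{E}}^m$, the assertion is genuinely restrictive only for the low-order (small $n$) coefficients. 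I would unwind the alternative characterization: for all $n < 0$, $v_p(a_{n-b}) \ge -\tfrac1m n$, but $t^\nu$ has no negative-index coefficients, so this is vacuous; hence the real statement must be interpreted via the partial-valuation definition $w_k(t^\nu) \ge -km - b = -k p(p-1) + p$. For $k = 0$ this reads $w_0(t^\nu) \ge p$, meaning: the smallest $n$ with $v_p(a_n) \le 0$ satisfies $n \ge p$. Equivalently, $a_1, \dots, a_{p-1}$ all have $v_p > 0$, i.e. are divisible by (a positive power of) $p$ — which is exactly the divisibility $p \mid \binom{p}{k}$ translated through the $(p-1)$-th root extraction. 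So the key computation is: in $t^\nu = t\,h(t^{p-1})$ with $h \in 1 + u\mathcal{O}_L[[u]]$, the coefficients $a_2, \dots, a_{p-1}$ of $t^2, \dots, t^{p-1}$ vanish (only powers $t^{1 + (p-1)j}$ appear), and $a_1 = 1$ has $v_p(a_1) = 0$, forcing $w_0 \ge p$ once we check no coefficient $a_n$ with $1 \le n \le p-1$, $n \ne 1$ is a $p$-adic unit — but those are all zero, and $a_1$ is a unit at $n = 1 < p$.

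This last point suggests I should double-check the normalization: possibly the intended reading of $w_0(t^\nu) \ge p$ fails because $a_1 = 1$ gives $w_0 = 1 < p$. The resolution is that the lemma is about $t^\nu$ as an element relevant to the \emph{$U_p$} computation, and the correct object to bound may be $t^\nu/t$ or the claim uses a different indexing; I would reconcile this by carefully re-deriving the partial-valuation bound $w_k(t^\nu) \ge -kp(p-1) + p$ for each $k \ge 1$, where it becomes the substantive statement: for $v_p(a_n) \le k$ one needs $n \ge -kp(p-1) + p$, automatic for $k \ge 1$ since $n \ge 0 > -kp(p-1)+p$ when $k(p-1) > 1/p$... no — $-p(p-1) + p = -p(p-2) < 0$ for $p \ge 3$, so indeed automatic. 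So the only constraint is at $k = 0$, and I would conclude that the lemma as literally stated needs $w_0(t^\nu) \ge p$, which, given $a_1 = 1$, means the statement must actually be about $t^\nu \in \mathcal{O}_{\mathcal{E}}^{p(p-1)}$ up to the $b$-shift being an \emph{upper} slack rather than a constraint. The hard part will be getting this bookkeeping exactly right — i.e. correctly matching the author's sign convention for $b$ in $\mathcal{O}_{\mathcal{E}}^m(b)$ against the explicit power series $t^\nu = t + (\text{higher order, }p\text{-divisible up to }t^{p})$. Once the convention is pinned down, the proof reduces to the elementary observations: (i) $u^\nu = (1+u)^p - 1 \equiv u^p \bmod p$, (ii) extracting the $(p-1)$-th root stays in $1 + u\mathcal{O}_L[[u]]$, and (iii) translating the resulting coefficient estimates into the $w_k$ language via the displayed properties of $w_k$ and the alternative definition of $\mathcal{O}_{\mathcal{E}}^m(b)$.
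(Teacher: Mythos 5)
The proposal contains a fatal computational error at the very first step that the student never recovers from. You factor $t^\nu = t\cdot h(u)$ and assert that $h(u)^{p-1} = \big((u+1)^p-1\big)/u$ lies in $1 + u\Z[u]$. It does not: $\big((u+1)^p-1\big)/u = p + \binom{p}{2}u + \cdots + u^{p-1}$ has constant term $p$, not $1$, so it is not a $1$-unit and you cannot extract a $(p-1)$-th root of it inside $1 + u\mathcal{O}_L[[u]]$ by the binomial series. The correct power to factor out is $t^p$ (equivalently $u^p$ from $u^\nu$), because $\nu$ is a $p$-Frobenius lift and so $t^\nu \equiv t^p \pmod{\gothm}$, not $t^\nu \equiv t$. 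Writing $u^\nu/u^p = 1 + py$ with $y$ a polynomial in $u^{-1}$ having $\mathcal{O}_L$-coefficients, one gets $t^{-p}t^\nu = (1+py)^{1/(p-1)}$, a genuine $1$-unit whose binomial expansion converges to a \emph{Laurent series with infinitely many negative powers of $t$}, lying in $\mathcal{O}_{\mathcal{E}}^{p(p-1)}$ because $\binom{1/(p-1)}{n}p^n y^n$ gains a factor of $p$ for each negative shift of $(p-1)^2$ in degree.

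This error propagates: your claim that $t^\nu$ ``has a Taylor expansion in $\mathcal{O}_L[[t]]$'' is false (it has unboundedly negative powers of $t$, with coefficients tending $p$-adically to $0$), and your conclusion ``$a_1 = 1$'' is also false. In fact $a_p = 1$ is the unique unit coefficient and $v_p(a_n) \ge 1$ for all $n < p$, which is exactly what the $k=0$ case $w_0(t^\nu) \ge p$ requires. You notice the apparent contradiction ``$a_1 = 1$ gives $w_0 = 1 < p$'' but then try to explain it away by questioning the sign convention of $b$ or suggesting the lemma might concern $t^\nu/t$ rather than $t^\nu$ — neither is the case; the conventions are fine, and the tension disappears once the expansion is done correctly. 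The substantive content of the lemma is precisely that the unit coefficient sits at $t^p$ and the deviation from $t^p$ is controlled at slope $\frac{1}{p(p-1)}$: you need to establish $t^{-p}t^\nu \in \mathcal{O}_{\mathcal{E}}^{p(p-1)}$, and then observe that multiplying by $t^p$ shifts $w_k$ by $+p$, giving $t^\nu \in \mathcal{O}_{\mathcal{E}}^{p(p-1)}(-p)$.
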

	\begin{proof}
		Note that $(t^{p-1}+1)^p-1\in 
		\mathcal{O}_{\mathcal{E}}^{p(p-1)}(-p(p-1))$. This means 
		$t^{-(p-1)p}((t^{p-1}+1)^p-1)=1+py$, where
		$py \in \mathcal{O}_{\mathcal{E}}^{p(p-1)}$. This gives
		\begin{align*}
		t^{-p}t^{\nu} &= \sqrt[p-1]{1+py} \\
		&= \sum_{n=0}^\infty \binom{\frac{1}{{p-1}}}{n}p^ny^n.
		\end{align*}
		This power series converges in $\mathcal{O}_{\mathcal{E}}^{p(p-1)}$, 
		since the 
		binomials
		$\binom{\frac{1}{{p-1}}}{n}$ have nonnegative $p$-adic valuation.
		Therefore $t^{\nu}\in 
		\mathcal{O}_{\mathcal{E}}^{p(p-1)}(-p)$. 
	\end{proof}
	
	%
	%
	%

	\begin{lemma} \label{lemma: growth of coefficients of min poly of t}
		Let $Q(X)=X^p + a_1^\nu X^{p-1} + \dots +a_p^\nu$ be the minimal 
		polynomial of 
		$t^{-1}$ over 
		$\nu(\mathcal{E})$. Then $a_p \in t^{-1} \mathcal{B}$ and 
		$\frac{a_i}{p} \in 
		t^{-i} \mathcal{B}$ for $1\leq i \leq p-1$. 
	\end{lemma}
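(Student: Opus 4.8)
The plan is to compute the minimal polynomial $Q(X)$ of $t^{-1}$ over $\nu(\mathcal{E})$ directly from the defining relation $(t^{p-1}+1)^p - 1 = (t^\nu)^{p-1}$. Setting $s = t^\nu$, this relation exhibits $t$ as satisfying a polynomial over $\nu(\mathcal{E})$; since $[\mathcal{E}:\nu(\mathcal{E})] = p$, the minimal polynomial of $t$ (equivalently of $t^{-1}$) has degree $p$, and we can extract it from the relation. Writing everything in terms of $v = u^{-1} = t^{-(p-1)}$ and $s^{p-1} = (v^{-1}+1)^p v^p \cdot v^{-p} - \cdots$, the cleanest approach is to expand $(t^{p-1}+1)^p - 1 - (t^\nu)^{p-1}$ as a polynomial in $t$, homogenize by multiplying through by an appropriate power of $t^{-1}$, and read off the coefficients $a_i$. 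The key structural fact driving the estimate is that in the expansion $(t^{p-1}+1)^p - 1 = \sum_{k=1}^{p}\binom{p}{k} t^{(p-1)k}$, every binomial coefficient $\binom{p}{k}$ for $1 \le k \le p-1$ is divisible by $p$, while the top term $k=p$ contributes the $t^{p(p-1)}$ leading coefficient $1$.

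First I would write $(t^{p-1}+1)^p - 1 = t^{p(p-1)} + p \cdot h(t^{p-1})$ where $h(Y) = \sum_{k=1}^{p-1} \frac{1}{p}\binom{p}{k} Y^k \in \mathbb{Z}[Y]$, with $h$ of degree $p-1$ in $Y$ and $h(0)=0$, so $h(t^{p-1}) \in t^{p-1}\mathbb{Z}[t^{p-1}]$ of degree $(p-1)^2$ in $t$. Thus $t^{p(p-1)} + p\,h(t^{p-1}) - s^{p-1} = 0$. To get the minimal polynomial of $t^{-1}$, divide by $t^{p(p-1)}$: this gives a relation $1 + p\,t^{-p(p-1)}h(t^{p-1}) - s^{p-1}t^{-p(p-1)} = 0$. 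The term $t^{-p(p-1)}h(t^{p-1})$ is a polynomial in $t^{-(p-1)} = u^{-1}$ with integer coefficients and no constant term (lowest degree term $t^{-(p-1)\cdot p}\cdot t^{p-1}$-type, i.e.\ a polynomial of degree $p-1$ in $u^{-1}$ with vanishing constant term), hence lies in $u^{-1}\mathbb{Z}[u^{-1}] \subset u^{-1}\mathcal{B}$ (recall $\mathcal{B}$ contains $u^{-1}$ with the right growth since $v_p$ of its coefficient is $0 \ge -(-1)\cdot$… matches $\max\{0,-n\}$ for $n<0$). Rewriting $s^{p-1}t^{-p(p-1)} = (s\,t^{-p})^{p-1} \cdot$ — here one uses that $t^\nu t^{-p} = (1+py)^{1/(p-1)}$ from Lemma~\ref{lemma: growth of frobenius} is a unit in $\mathcal{O}_\mathcal{E}^{p(p-1)}$, so $s^{p-1}t^{-p(p-1)}$ is a unit times a power of $s = t^\nu$ living in $\nu(\mathcal{E})$ — and collecting terms, one reads off $Q(X) = X^p + \cdots$ after multiplying the relation by a suitable monomial $t^{-1}$ to make it monic of degree $p$ in $X = t^{-1}$.

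The main obstacle, and where I would spend the most care, is bookkeeping the passage from the degree-$p$ relation in $t$ to the monic degree-$p$ polynomial in $X = t^{-1}$ with coefficients genuinely in $\nu(\mathcal{E})$, and tracking which monomials $t^{-i}$ the coefficients $a_i$ acquire. The claim $a_p \in t^{-1}\mathcal{B}$ (note: not $p^{-1}a_p$) versus $p^{-1}a_i \in t^{-i}\mathcal{B}$ for $1 \le i \le p-1$ reflects exactly the split in the expansion: $a_p$ is the constant (in $s$) term, which comes from $1$ plus the $p\,h$-correction — the $1$ forces a $t^{-1}\cdot t$-type normalization giving the $t^{-1}\mathcal{B}$ membership after monicizing, while the intermediate $a_i$ all pick up a factor of $p$ from $p\,h(t^{p-1})$, hence $p^{-1}a_i$ is integral and lands in $t^{-i}\mathcal{B}$ by the growth bound on $h$. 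I would verify the $\mathcal{B}$-membership by the explicit description $\mathcal{B} = \{\sum a_n u^n : v_p(a_n) \ge \max\{0,-n\}\}$, checking that each $a_i$, expanded in $u = t^{p-1}$, has coefficients satisfying the required valuation bound; the key input is that $h$ has \emph{integer} coefficients and bounded degree, so the negative-power coefficients of $t^{-i}a_i$ (equivalently the $u^{-n}$ coefficients) are integers times powers of $p$ coming from $(1+py)^{\pm 1/(p-1)}$ expansions, whose valuations grow linearly in $n$ as needed. No deep input beyond Lemma~\ref{lemma: growth of frobenius} and elementary binomial estimates is required; the difficulty is purely organizational.
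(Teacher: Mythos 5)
There is a genuine gap, and it appears at the very first step: the extraction of the degree-$p$ minimal polynomial from the defining relation. The relation $(t^{p-1}+1)^p - 1 = (t^\nu)^{p-1}$, viewed as a polynomial identity in $t$ with coefficients in $\nu(\mathcal{E})$, has degree $p(p-1)$, whereas the minimal polynomial of $t^{-1}$ over $\nu(\mathcal{E})$ has degree exactly $p$. You cannot ``read off'' a degree-$p$ monic factor by dividing by $t^{p(p-1)}$ and multiplying by a monomial --- any monomial multiplication preserves the degree of the relation, so you are left with a degree-$p(p-1)$ polynomial that is only a multiple (by a degree-$p(p-2)$ cofactor) of the thing you want. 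You never say how this extraneous factor is to be removed, and without removing it the coefficients you read off are not the $a_i$.

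Worse, the substitution you propose collapses the relation to a tautology. Writing $s=t^\nu$ and using $st^{-p}=(1+py)^{1/(p-1)}$, you get $s^{p-1}t^{-p(p-1)}=(st^{-p})^{p-1}=1+py$. But $y$ was \emph{defined} by $t^{-p(p-1)}\big((t^{p-1}+1)^p-1\big)=1+py$, which is exactly $1+p\,t^{-p(p-1)}h(t^{p-1})=1+py$. So after the substitution your equation $1+p\,t^{-p(p-1)}h(t^{p-1})-s^{p-1}t^{-p(p-1)}=0$ reduces to $py=py$, i.e.\ to $0=0$, and carries no information about the minimal polynomial whatsoever. The intuition you articulate --- that the divisibility-by-$p$ of $\binom{p}{k}$ for $0<k<p$ is the source of the estimate, and that the $a_p$ versus $a_i/p$ dichotomy reflects the split between the leading term and the $p\cdot h$ correction --- is accurate, but the mechanism you describe does not deliver the $a_i$.

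The paper's own argument avoids the degree problem entirely: it invokes the $p$-Frobenius decomposition $\mathcal{O}_{\mathcal{E}}^{p(p-1)}(p)=\nu(\mathcal{O}_{\mathcal{E}}^{p-1}(1))\oplus\bigoplus_{i=1}^{p-1}t^{-i}\nu(\mathcal{O}_{\mathcal{E}}^{p-1})$ (Lemma~\ref{lemma: decomposition of frobenius action}, with input from Lemma~\ref{lemma: growth of frobenius}) to express $-t^{-p}$ directly in the $\nu(\mathcal{E})$-basis $\{1,t^{-1},\dots,t^{-(p-1)}\}$, which simultaneously produces the $a_i$ and their growth bounds in one stroke. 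The divisibility $p\mid a_i$ for $i<p$ then comes from reducing mod $p$ and using $t^\nu\equiv t^p$, and the sharper containment $t^{-i}\mathcal{E}_0$ comes from a Galois-symmetry argument: the conjugates of $t^{-1}$ all lie in the eigenspace $t^{-1}\mathcal{E}_0$ because $\nu$ commutes with $\mathrm{Gal}(\mathcal{E}/\mathcal{E}_0)$, so the $i$-th elementary symmetric polynomial lands in $t^{-i}\mathcal{E}_0$. If you want to salvage your approach, the missing ingredient is precisely this kind of basis-decomposition or Galois-eigenspace argument to cut the degree-$p(p-1)$ relation down to a degree-$p$ one; as written the proof does not reach the statement.
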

	
	\begin{proof}
		From Lemma \ref{lemma: growth of frobenius} and Lemma \ref{lemma: decomposition of frobenius action}
		we know that $-t^p=a_1^\nu t^{p-1} + \dots +a_p^\nu$, where $a_p \in \mathcal{O}_{\mathcal{E}}^{p-1}(1)$ and $a_i \in \mathcal{O}_{\mathcal{E}}^{p-1}$.
		In particular, $Q(X)=X^p + a_1^\nu X^{p-1} + \dots +a_p^\nu$ is the minimal 
		polynomial of 
		$t^{-1}$ over 
		$\nu(\mathcal{E})$. Furthermore, we have $Q(X) \equiv X^p - t^p \mod p$ since
		$t^{\nu} \equiv t^p \mod p$,
		which implies $p|a_i$ for $i=1,\dots,p-1$. Then
		from Lemma \ref{lemma: mult or divide by p growth}, 
		we see that $\frac{a_i}{p} \in 
		\mathcal{O}_{\mathcal{E}}^{p-1}(p-1)$ for $i=1,\dots,p-1$. To finish the proof, let 
		$\beta_1, \dots, \beta_p$ be the Galois 
		conjugates of $t^{-1}$ over $\nu(\mathcal{E})$. Since $\nu$ 
		commutes with $Gal(\mathcal{E}/\mathcal{E}_0)$, we know that
		each $\beta_i$ is in the same summand of \eqref{type 2 decomposition}.
		That is, $\beta_i \in t^{-1}\mathcal{E}_0$. Since $a_i^\nu$
		is a symmetric function of the $\beta_i$ of degree $i$, we see that
		$\frac{a_i}{p} \in t^{-i}\mathcal{E}_0$ for $1\leq i \leq p-1$ and $a_p 
		\in 
		t^{-1}\mathcal{E}_0$. The result follows by observing that
		$t^{-i}\mathcal{E}_0\cap \mathcal{O}_{\mathcal{E}}^{p-1}(p-1) = 
		t^{-i}\mathcal{B}$ for $i=1,\dots, p-1$. 
		
	\end{proof}
	
	\begin{corollary} \label{corollary: basic growth of Up}
		For all $n\in \Z$ and $0\leq j \leq p-1$ we have $U_p(t^{-(j+np)}) \in 
		t^{-(j+n)}\mathcal{B}$. 
	\end{corollary}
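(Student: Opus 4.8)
The plan is to prove the equivalent statement $U_p(t^{-k})\in t^{-\phi(k)}\mathcal{B}$ for every $k\in\Z$, where $\phi(k):=k-\lfloor k/p\rfloor(p-1)$; indeed, writing $k=j+np$ with $0\le j\le p-1$ one has $n=\lfloor k/p\rfloor$ and $\phi(k)=j+n$, so this is exactly the assertion of the corollary. The engine is the minimal polynomial relation of Lemma \ref{lemma: growth of coefficients of min poly of t}, namely $t^{-p}+a_1^\nu t^{-(p-1)}+\dots+a_{p-1}^\nu t^{-1}+a_p^\nu=0$. Multiplying it by $t^{-m}$ and applying the $\nu^{-1}$-semilinear operator $U_p$ (so that $U_p(a_i^\nu x)=a_iU_p(x)$) yields, for every $m\in\Z$, the recursion
\[ U_p(t^{-(m+p)})=-\sum_{i=1}^{p-1}a_iU_p(t^{-(m+p-i)})-a_pU_p(t^{-m}), \]
which expresses $U_p(t^{-k})$ through $p$ consecutive neighbours and can be run upward (solving for the top term) or, after inverting $a_p$, downward (solving for the bottom term).

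Before the induction I would collect the bookkeeping facts. Lemma \ref{lemma: growth of coefficients of min poly of t} gives $a_p\in t^{-1}\mathcal{B}$ and $a_i\in p\,t^{-i}\mathcal{B}$ for $1\le i\le p-1$; a one-line check on the defining inequalities of $\mathcal{B}$ shows $p\,u^{-1}\mathcal{B}\subseteq\mathcal{B}$, and this is precisely what absorbs the stray factors of $p$ carried by $a_1,\dots,a_{p-1}$. I would also note $a_p^{-1}\in t\mathcal{B}$: reducing the relation $t^{-p}+a_1^\nu t^{-(p-1)}+\dots+a_p^\nu=0$ modulo $\gothm$ annihilates the middle terms (as $p\mid a_i$) and forces $a_p\equiv -t^{-1}\pmod{\gothm}$, so $a_pt$ is a unit of $\mathcal{B}$. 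The only combinatorics needed are $\phi(k+p)=\phi(k)+1$ and, for $0<\ell<p$, the gap identity $\phi(k)-\phi(k-\ell)=\ell-\delta(p-1)$ with $\delta=\lfloor k/p\rfloor-\lfloor(k-\ell)/p\rfloor\in\{0,1\}$.

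With these in hand the argument has three stages. \emph{Base cases $0\le k\le p-1$:} $U_p(1)=1$, and for $1\le k\le p-1$ Newton's identities for the power sums of the roots of $Q$ (whose $k$-th power sum is $\mathrm{Tr}_{\mathcal{E}^\dagger/\nu(\mathcal{E}^\dagger)}(t^{-k})$, and $U_p(t^{-k})=\tfrac1p\nu^{-1}$ of it) give $U_p(t^{-k})=-\sum_{i=1}^{k-1}a_iU_p(t^{-(k-i)})-\tfrac{k}{p}a_k$; a short induction on $k$ using $a_i\in p\,t^{-i}\mathcal{B}$ and $a_k/p\in t^{-k}\mathcal{B}$ gives $U_p(t^{-k})\in t^{-k}\mathcal{B}=t^{-\phi(k)}\mathcal{B}$. \emph{Ascending stage $k\ge p$:} apply the recursion with $m=k-p\ge0$; each $a_iU_p(t^{-(k-i)})$ lies in $p\,t^{-(i+\phi(k-i))}\mathcal{B}=t^{-\phi(k)}(p\,u^{-\delta})\mathcal{B}\subseteq t^{-\phi(k)}\mathcal{B}$, while $a_pU_p(t^{-(k-p)})\in t^{-(1+\phi(k-p))}\mathcal{B}=t^{-\phi(k)}\mathcal{B}$ since $\phi(k-p)=\phi(k)-1$. \emph{Descending stage $k\le-1$:} solve the recursion for the bottom term, $U_p(t^{-k})=a_p^{-1}\bigl(-U_p(t^{-(k+p)})-\sum_{i=1}^{p-1}a_iU_p(t^{-(k+p-i)})\bigr)$, whose right-hand side involves only indices $>k$; using $a_p^{-1}\in t\mathcal{B}$, $a_i\in p\,t^{-i}\mathcal{B}$, $\phi(k+p)=\phi(k)+1$ and the gap identity above applied at centre $k+p$, every term again lands in $t^{-\phi(k)}\mathcal{B}$. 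Strong induction upward from $k=p$ and downward from $k=-1$, on top of the base cases, finishes the proof.

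I expect the real work to be the descending stage, and concretely the verification that the powers of $p$ always exactly cancel the off-by-$(p-1)$ shifts in the exponents: one must use both that $a_1,\dots,a_{p-1}$ — but not $a_p$ — are divisible by $p$ and that $p\,u^{-1}\mathcal{B}\subseteq\mathcal{B}$, and one must extract $a_p^{-1}\in t\mathcal{B}$ in order to reach positive powers of $t$ at all. Once those three points are secured, the three inductions are routine.
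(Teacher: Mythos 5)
Your proof is correct and follows essentially the same strategy as the paper: derive a $p$-term recursion for $U_p(t^{-k})$ from the minimal polynomial of $t^{-1}$ over $\nu(\mathcal{E})$ (equivalently, the Newton identities), then induct on $k$ using the coefficient bounds of Lemma~\ref{lemma: growth of coefficients of min poly of t}. You spell out the $n<0$ (descending) direction, which the paper only sketches as ``a similar symmetric polynomial argument,'' and correctly isolate the extra input needed there, namely that $a_pt$ is a unit of $\mathcal{B}$ (so $a_p^{-1}\in t\mathcal{B}$), obtained by reducing the minimal polynomial modulo $\gothm$.
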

	
	\begin{proof}
		We will only prove the case where $n\geq 0$. The proof for $n<0$ is a similar 
		symmetric polynomial argument. Let $\beta_1,\dots,\beta_p$ be the Galois 
		conjugates of $t^{-1}$ over $\nu(\mathcal{E})$, so that 
		$U_p(t^{-k})^\nu=\frac{\sum \beta_i^k}{p}$. 
		Then the $a_i$ from Lemma \ref{lemma: growth of coefficients of min poly 
			of t} are symmetric in $\beta_1,\dots,\beta_p$:
		\begin{align*}
		a_k^\nu&= \sum_{i_1<\dots<i_k} \beta_{i_1}\dots\beta_{i_n} \text{ for 
			$k=1,\dots, 
			p$}.
		\end{align*}
		The Newton identities give 
		\begin{align*}
		U_p(t^{-k})&=(-1)^{k-1}k\frac{a_k}{p} +  
		\sum_{i=1}^{k-1}(-1)^{k-1+i}a_{k-i}U_p(t^{-i}) ~~\text{   
			for $k\leq p$}\\
		U_p(t^{-k})&= 
		\sum_{i=k-p}^{k-1}(-1)^{k-1+i}a_{k-i}U_p(t^{-i}) 
		~~\text{   for $k>p$}.    
		\end{align*}
		The result follows from Lemma \ref{lemma: growth of coefficients of min 
			poly of t} by inducting on $k$. 
	\end{proof}
	
	\begin{proof} (Of Proposition \ref{proposition: type 2 Frobenius})
		First note that $a(j+np)-a(j+n)=n$. Then we have
		\begin{align*}
		U_p(p^{a(j+np)}t^{-(j+np)}) &\in p^n p^{a(j+n)} t^{-(j+n)} \mathcal{B} 
		\subset p^n \mathcal{A}. 
		\end{align*}
		The second part of the proposition is immediate from Corollary 
		\ref{corollary: basic growth of Up}.
	\end{proof}

	\section{Unit-root $F$-crystals}
	\label{section: Local unit-root $F$-isocrystals}	
	
	\subsection{Rank one $F$-crystals and $p$-adic characters}
	\label{subsection: local rank one crystals}
	Let $\overline{S}$ be either $\Spec(\mathbb{F}_q((t)))$ or a smooth, reduced,
	irreducible affine $\mathbb{F}_q$-scheme $\Spec(\overline{R})$. We let $S=\Spec(R)$
	be a flat $\mathcal{O}_L$-scheme whose special fiber is $\overline{S}$ and assume that $R$ is $p$-adically complete (if $\overline{S}=\Spec(\mathbb{F}_q((t)))$ then we may take
	$R=\mathcal{O}_\mathcal{E}$).
	Fix a $p$-Frobenius endomorphism $\nu$ on $R$ (as in \S \ref{subsection: Frobenius endomorphisms}). Then $\sigma=\nu^a$
	is a $q$-Frobenius endomorphism. 
	\begin{definition}
		A $\varphi$-module for $\sigma$ over $R$ is a locally
		free $R$-module $M$
		equipped with a $\sigma$-semilinear endomorphism $\varphi: M \to M$. 
		More precisely,
		we have $\varphi(cm)=\sigma(c)\varphi(m)$ for $c\in R$.  
	\end{definition}	
	\begin{definition} \label{varphi module definition}
		A unit-root $F$-crystal $M$ over $\overline{S}$ is
		a $\varphi$-module over $R$ such that $\sigma^* \varphi: R \otimes_{\sigma} M \to M$
		is an isomorphism. The rank of $M$ is defined to be the rank of the underlying
		module $M$. 
	\end{definition}
	\noindent The following theorem is a characteristic $p$ version
	of the Riemann-Hilbert correspondence.
	\begin{theorem}[Katz, see \S 4 in \cite{Katz-p-adic_properties}] \label{mod p 
			riemann-hurwitz}
		There is an equivalence of categories
		\begin{align*}
		\{\text{rank one unit-root $F$-crystals over $\overline{S}$}\} 
		&\longleftrightarrow \{\text{continuous characters} ~ \psi:\pi_1(\overline{S}) \to \mathcal{O}_L^\times\}, 
		\end{align*}
		where $\pi_1(\overline{S})$ is the \`etale fundamental group of $\overline{S}$. 
	\end{theorem}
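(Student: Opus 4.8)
The plan is to establish the equivalence by constructing functors in both directions and checking they are mutually inverse, following Katz's argument in \cite{Katz-p-adic_properties}. First I would recall the dictionary between finite \'etale covers and representations of $\pi_1(\overline{S})$: a continuous character $\psi:\pi_1(\overline{S})\to\mathcal{O}_L^\times$ with finite image factors through $\Gal(\overline{S}'/\overline{S})$ for some finite (not necessarily connected) \'etale cover $\overline{S}'$, and such characters are built from $\mu_{p-1}$-valued ones (tame part) together with $\Z_p$-valued ones (wild part, governed by Artin--Schreier--Witt theory). The key point is that continuity into $\mathcal{O}_L^\times$ means the character is a limit of characters with finite image of $p$-power order times a character of order prime to $p$.

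Next I would describe the functor from characters to $F$-crystals. Given $\psi$ with values in the finite group $\mu_n\times(1+\gothm)$ trivialized on a Galois cover $\overline{S}'\to\overline{S}$, lift $\overline{S}'$ to a finite \'etale $R$-scheme $S'=\Spec(R')$ with a compatible $\nu$ (possible since $R$ is $p$-adically complete and \'etale lifts are unique), and set $M=(R'e_0)^{\Gal}$ where $\Gal$ acts on $R'$ in the usual way and on $e_0$ through $\psi^{-1}$; the Frobenius $\varphi$ is $\sigma$ acting on $R'$ and trivially on $e_0$. One checks $M$ is locally free of rank one over $R$ and that $\sigma^*\varphi$ is an isomorphism because $\sigma$ on $R'$ is finite \'etale-compatible; the unit-root condition amounts to $\varphi(e_0)$ being a unit, which holds since the transition is governed by a coboundary with unit values. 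Conversely, from a rank-one unit-root $F$-crystal $M=Re_0$ with $\varphi(e_0)=\alpha e_0$, $\alpha\in R^\times$, I would solve the \'etale descent equation: the Dwork-style trick is that $\alpha$ can be written (after passing to a finite \'etale cover of $\overline{S}$) as $\sigma(\beta)/\beta$ for a unit $\beta$, using that $\coker(\sigma-1)$ on units is controlled by $\pi_1$; the cover on which $\alpha$ trivializes, together with the resulting $\Gal$-action, produces the character $\psi$.

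The main obstacle is the surjectivity/exactness input on the crystal side: showing every rank-one unit-root $F$-crystal trivializes over a finite \'etale cover, i.e. that the equation $\varphi(e) = e$ has a solution generating $M$ after base change. This is the heart of Katz's theorem and rests on (a) the Artin--Schreier--Witt description of $\pi_1^{(p)}(\overline{S})$ to handle the pro-$p$ part of $\alpha\bmod\gothm^k$ inductively, and (b) a tame Kummer-theory argument for the prime-to-$p$ part coming from $\alpha\bmod\gothm$, which lands in $\overline{R}^\times/(\overline{R}^\times)^{q-1}$. Once trivialization is known, full faithfulness is formal: a morphism of $F$-crystals becomes, after trivializing both, a $\sigma$-invariant element of $R'$, hence an element of $E$ (using $\mathcal{E}^{\nu=1}=E$ in the local-field case, or the analogous statement $R^{\sigma=1}=\mathcal{O}_E$ in general), matching a morphism of characters. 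I would also need to note that the construction is independent of the choices of $\overline{S}'$ and of the lift $S'$, which follows from the uniqueness of \'etale lifts and a standard cofinality argument over the system of all such covers. Finally, I would remark that the case $\overline{S}=\Spec(\mathbb{F}_q((t)))$ with $R=\mathcal{O}_{\mathcal{E}}$ is handled identically, since $\mathcal{O}_{\mathcal{E}}$ is Henselian along $(p)$ with separably closed-compatible \'etale theory, so the same descent applies.
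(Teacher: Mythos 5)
The paper does not actually prove this theorem; it cites it to Katz's \emph{$p$-adic properties of modular schemes and modular forms}, \S 4, and then merely \emph{describes} the forward functor (characters to $F$-crystals) in the special case of a character with finite image, using the contracted product $M_\psi=(R_1 \otimes_{\mathcal{O}_L} V)^{\Gal}$. Your description of that functor matches the paper's, and your overall outline (decompose $\mathcal{O}_L^\times$ into tame prime-to-$p$ and pro-$p$ wild parts, solve $\sigma(\beta)/\beta=\alpha$ by a Kummer/Lang isogeny mod $\gothm$ and an inductive Artin--Schreier lifting mod higher powers, then extract $\psi$ from the Galois action on $\beta$) is the standard route through Katz's proof.

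One genuine inaccuracy: you assert repeatedly that a rank-one unit-root $F$-crystal ``trivializes over a finite \'etale cover,'' and you build the inverse functor and the faithfulness argument on that. This is false in general. A continuous $\psi:\pi_1(\overline{S})\to 1+\gothm$ can have infinite image, in which case the corresponding $F$-crystal trivializes only over a \emph{pro}-finite \'etale cover, namely the inverse limit of the finite covers $\overline{S}'_k$ trivializing $\alpha\bmod\gothm^k$. Your inductive discussion of $\alpha\bmod\gothm^k$ shows you have the right mechanism in mind, but the statement as written --- and the reduction of full faithfulness to ``a $\sigma$-invariant element of $R'$'' for a single finite cover $R'$ --- needs to be reformulated as a limit over the tower $\{R'_k\}$, with $R^{\sigma=1}=\mathcal{O}_E$ applied at each finite level and compatibility checked in the limit. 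With that adjustment your sketch is consistent with Katz's argument and with the partial description the paper gives.
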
	

	 Let us describe this correspondence in a certain case. Let
	 $\overline{S}_1 \to \overline{S}$ be a finite \`etale cover and assume that
	 $\psi$ comes from a map $\psi_0:Gal(\overline{S}_1/\overline{S})\to \mathcal{O}_{E}^\times$.
	 This cover deforms into a finite \`etale map of affine formal schemes
	$S_1=\Spf(R_1) \to S$. Both $\nu$ and $\sigma$ extend to $R_1$ and commute with
	the action of $Gal(\overline{S}_1/\overline{S})$ (see e.g. \cite[\S 2.6]{Tsuzuki-finite_local_monodromy}). Let 
	$V_0=\mathcal{O}_{E}e_0$ be a rank one $\mathcal{O}_E$-module, on which $Gal(\overline{S}_1/\overline{S})$ acts on via $\psi_0$, and let $V=V_0 \otimes_{\mathcal{O}_E}\mathcal{O}_L$.  The unit-root $F$-crystal associated to is $\psi$ is $M_\psi=(R_1 \otimes_{\mathcal{O}_L} V)^{Gal(\overline{S}_1/\overline{S})}$ with $\varphi=\sigma\otimes_{\mathcal{O}_L} id$. 
	There is a Galois equivariant isomorphism 
	\[ (S_1 \otimes_{\mathcal{O}_{E}} V_0) \to (S_1 
	\otimes_{\mathcal{O}_L} V).\]
	In particular, the map $\varphi$ has an $a$-th root $\varphi_0=\nu \otimes_{\mathcal{O}_{E}} id$.
	
	Now make the additional assumption that $M_\psi$ is free as an $R$-module.
	Write $M_\psi=Re_0$. Then we have $\varphi(e_0)=\alpha e_0$ (resp. $\varphi_0(e_0)=\alpha_0e_0$), where $\alpha \in R^\times$. 
	We refer to $\alpha$ (resp. $\alpha_0$) as a \emph{Frobenius structure} 
	(resp. $p$\emph{-Frobenius structure}) of $M$. We have the relation $\alpha=\prod_{i=0}^{a-1} \alpha_0^{\nu^i}$.  
	If $e_1=be_0$ with $b \in R^\times$, and
	$\varphi(e_1)=\alpha'e_1$ (resp. $\varphi(e_1)=\alpha_0'e_1$), then we have $\alpha'=\frac{b^\sigma}{b}\alpha$ (resp. $\alpha_0'=\frac{b^\nu}{b}\alpha_0$).
	In particular, a Frobenius structure (resp. $p$-Frobenius structure)
	of $M$ is unique up to multiplication by
	an element of the form $\frac{b^\sigma}{b}$ (resp. $\frac{b^\nu}{b}$) with $b \in R^\times$.

	\subsection{Some local Frobenius structures}
	We now restrict ourselves to the case where $\overline{S}=\Spec(\mathbb{F}_q((t)))$,
	so that rank one unit-root $F$-crystals over $\overline{S}$ correspond to characters of
	$G_{\mathbb{F}_q((t))}$, the absolute Galois group of $\mathbb{F}_q((t))$.
	We let $R=\mathcal{O}_{\mathcal{E}}$. Since $\mathcal{O}_{\mathcal{E}}$ is a local ring, all locally
	free modules of rank one are isomorphic to $\mathcal{O}_{\mathcal{E}}$.
	
	\subsubsection{Unramified Artin-Schreier characters}
	\begin{proposition} \label{proposition: unramified character proposition}
		Let $\nu$ be any $p$-Frobenius endomorphism of $\mathcal{O}_\mathcal{E}$
		and let $\sigma=\nu^a$. 
		Let $\psi:G_{\mathbb{F}_q((t))} \to \mathcal{O}_L^\times$ and let $M_\psi$ be the
		corresponding unit-root $F$-crystal. Assume that $Im(\psi)\cong \Z/p\Z$
		and that $\psi$ is unramified. There exists a $p$-Frobenius structure
		$\alpha_0$ of $M_\psi$ with $\alpha_0 \in 1+\gothm$ (recall $\gothm$ is the
		maximal ideal of $\mathcal{O}_L$). Furthermore, if $c \in 1+\gothm\mathcal{O}_\mathcal{E}$ is another $p$-Frobenius structure
		of $M_\psi$, there exists $b\in 1+\gothm\mathcal{O}_\mathcal{E}$ with $\alpha_0=\frac{b^\nu}{b}c$.
	\end{proposition}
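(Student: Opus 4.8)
The plan is to exploit the explicit structure of unramified $\Z/p\Z$-characters of $\mathbb{F}_q((t))$ via Artin--Schreier theory, and then show that the corresponding $F$-crystal can be trivialized over $\mathcal{O}_{\mathcal E}$ after a suitable unit twist. Since $\psi$ is unramified with image $\Z/p\Z$, it factors through $\mathrm{Gal}(\mathbb{F}_{q^p}/\mathbb{F}_q)$, i.e.\ comes from $\psi_0:\mathrm{Gal}(\mathbb{F}_{q^p}/\mathbb{F}_q)\to \mathcal{O}_E^\times$, and the covering $\overline S_1 \to \overline S$ deforms to $R_1 = \mathcal{O}_{\mathcal E}\otimes_{\mathcal{O}_L}\mathcal{O}_{L'}$, where $L'$ is the unramified extension of $L$ of degree $p$. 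By the discussion following Theorem~\ref{mod p riemann-hurwitz}, we have $M_\psi = (R_1\otimes_{\mathcal{O}_L}V)^{\mathrm{Gal}}$, and a $p$-Frobenius structure $\alpha_0$ is determined by $\varphi_0 = \nu\otimes\mathrm{id}$ in any chosen $R$-basis $e_0$ of $M_\psi$.

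First I would produce \emph{some} $p$-Frobenius structure and then normalize it into $1+\gothm$. Concretely, pick a generator $\tau$ of $\mathrm{Gal}(\mathbb{F}_{q^p}/\mathbb{F}_q)$; then $\psi_0(\tau) = \zeta$ is a $p$-th root of unity in $\mathcal{O}_E^\times$, so $\zeta \in 1+\gothm$ (since $v_p(\zeta-1)>0$ for a $p$-th root of unity, $p\geq 3$). A generator $e_0$ of $M_\psi$ over $R = \mathcal{O}_{\mathcal E}$ can be written $e_0 = \sum_{j} g^{\nu^j}\otimes \tau^j e_0^{V}$-style averaging over $\mathrm{Gal}$, where $g$ is a unit in $R_1$ chosen by a Hensel/approximation argument so that $\varphi_0(e_0)=\alpha_0 e_0$ with $\alpha_0$ a unit; one then computes $v_p(\alpha_0 - 1)$ and checks it is positive by comparing with the mod-$\gothm$ reduction of $\varphi_0$, which is the identity on the trivial reduction of $M_\psi$ (the character is unramified, so the $F$-crystal is \'etale and its reduction mod $\gothm$ is the constant crystal of rank one, hence Frobenius reduces to the $q$-power map acting trivially on the coefficient line). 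That forces $\alpha_0 \equiv 1 \bmod \gothm$, i.e.\ $\alpha_0 \in 1+\gothm$. For the normalization to land in $\mathcal{O}_L$ rather than $\mathcal{O}_{\mathcal E}$, I would use that $\psi$ is defined over the \emph{closed point}: the crystal $M_\psi$ is pulled back from the residue field crystal over $\Spec(\mathbb{F}_q)$, so it admits a basis over $\mathcal{O}_L$ itself on which $\varphi_0 = \nu$, giving $\alpha_0 \in \mathcal{O}_L^\times \cap (1+\gothm\mathcal{O}_{\mathcal E}) = 1+\gothm$.

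For the uniqueness-up-to-coboundary statement: if $c\in 1+\gothm\mathcal{O}_{\mathcal E}$ is another $p$-Frobenius structure, then by the general discussion $c = \frac{b^\nu}{b}\alpha_0$ for some $b\in R^\times = \mathcal{O}_{\mathcal E}^\times$. I would show $b$ can be taken in $1+\gothm\mathcal{O}_{\mathcal E}$. Write $b = \omega\cdot(1+z)$ with $\omega$ a Teichm\"uller-type unit in $\mathcal{O}_{\mathcal E}^\times$ reducing to a unit of $\mathbb{F}_q((t))^\times$ and $z\in \gothm\mathcal{O}_{\mathcal E}$; then $\frac{b^\nu}{b} = \frac{\omega^\nu}{\omega}\cdot\frac{1+z^\nu}{1+z}$, and since both $c$ and $\alpha_0$ lie in $1+\gothm\mathcal{O}_{\mathcal E}$, reducing mod $\gothm$ gives $\frac{\bar\omega^\nu}{\bar\omega} = 1$ in $\mathbb{F}_q((t))^\times$, i.e.\ $\bar\omega$ is fixed by the $q$-power Frobenius on $\mathbb{F}_q((t))$; but the fixed field of $\nu$ on $\mathcal{O}_{\mathcal E}$ is $\mathcal{O}_E$ (cf.\ $\mathcal{E}^{\nu=1}=E$ noted in \S\ref{subsection: basic definitions}, intersected with $\mathcal{O}_{\mathcal E}$), so $\omega$ is a unit of $\mathcal{O}_E$ and $\frac{\omega^\nu}{\omega}=1$ outright. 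Hence $b/\omega \in 1+\gothm\mathcal{O}_{\mathcal E}$ serves as the desired coboundary witness, replacing $b$.

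The main obstacle I expect is the \emph{descent to $\mathcal{O}_L$} — ensuring the normalized $\alpha_0$ is a scalar in $1+\gothm$ and not merely an element of $1+\gothm\mathcal{O}_{\mathcal E}$. This hinges on the crystal being pulled back from $\Spec(\mathbb{F}_q)$, which is exactly the content of $\psi$ being unramified (so it factors through $\pi_1$ of the closed point). Making this rigorous requires carefully tracking the equivalence of Theorem~\ref{mod p riemann-hurwitz} under the pullback $\pi_1(\mathbb{F}_q((t)))\to \pi_1(\mathbb{F}_q) = \mathrm{Gal}(\overline{\mathbb{F}}_q/\mathbb{F}_q)$, and verifying that the deformed \'etale cover $R_1$ can be taken to be the constant unramified extension $\mathcal{O}_{L'}\otimes_{\mathcal{O}_L}\mathcal{O}_{\mathcal E}$ with $\nu$ acting only on the coefficients — after which the averaging construction of $e_0$ lands in $\mathcal{O}_L$ automatically. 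Everything else is a routine reduction-mod-$\gothm$ bookkeeping argument.
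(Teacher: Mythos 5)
Your proposal takes essentially the same route as the paper. For existence, you eventually land on the paper's exact argument: because $\psi$ is unramified it factors through $\pi_1(\Spec(\mathbb{F}_q))$, so $M_\psi$ is the base change of an $\mathcal{O}_L$-linear $\varphi$-module $M_{\psi_1}$, giving a $p$-Frobenius structure in $\mathcal{O}_L^\times$, and then $\alpha_0\equiv 1\bmod\gothm$ because $\psi$ reduces to the trivial character. The averaging/Hensel construction over $R_1=\mathcal{O}_{\mathcal E}\otimes_{\mathcal{O}_L}\mathcal{O}_{L'}$ in your first paragraph is correct but unnecessary: the direct pullback observation already produces the generator you want.

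There is one slip in the uniqueness argument that you should fix. Since $\nu$ is a $p$-Frobenius endomorphism, its reduction modulo $\gothm$ is the $p$-power map on $\mathbb{F}_q((t))$, not the $q$-power map. From $\alpha_0=\frac{b^\nu}{b}c$ with $\alpha_0,c\in 1+\gothm\mathcal{O}_\mathcal{E}$ you get $\bar b^{\,p}=\bar b$, hence $\bar b\in\mathbb{F}_p^\times$ (not merely $\mathbb{F}_q^\times$). This distinction is what the argument hinges on: the Teichm\"uller lift $[\bar b]$ of an $\mathbb{F}_p$-element lies in $\Z_p\subset\mathcal{O}_E=\mathcal{O}_\mathcal{E}^{\nu=1}$ and so is genuinely $\nu$-fixed, whereas the Teichm\"uller lift of a general $\mathbb{F}_q$-element lies in $\mathcal{O}_{L_0}$ and satisfies $[\bar b]^\nu=[\bar b]^p\neq[\bar b]$. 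Also, your factorization $b=\omega(1+z)$ should be read as: set $\omega=[\bar b]$ (the Teichm\"uller lift of the $\mathbb{F}_p$-element $\bar b$); then automatically $b/\omega\in 1+\gothm\mathcal{O}_\mathcal{E}$, and since $\omega^\nu=\omega$, the element $b/\omega$ is still a valid coboundary witness. Your phrasing suggests one can start from an arbitrary unit lift $\omega$ of $\bar b$ and then deduce $\omega\in\mathcal{O}_E$, which is not true; the correct order is: deduce $\bar b\in\mathbb{F}_p$, then choose $\omega$ to be its canonical ($\nu$-fixed) lift. With that correction your proof matches the paper's.
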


	\begin{proof}
		Since $\psi$ is unramified it factors through a representation $\psi_1:\pi_1(\Spec(\mathbb{F}_q)) \to \mathcal{O}_L^\times$.
		Note that $M_{\psi_1}$ is a $\varphi$-module over $\mathcal{O}_L$
		and we have $M_{\psi_1} \otimes_{\mathcal{O}_L} \mathcal{O}_\mathcal{E}=M_\psi$. 
		Thus, $M_\psi$ has a $p$-Frobenius structure $\alpha_0$ contained in $\mathcal{O}_L^\times$.
		Since $\psi$ reduces to the trivial character modulo $\gothm$, we may take
		$\alpha_0$ to be in $1 +\gothm$. Now, let $c\in 1+\gothm\mathcal{O}_\mathcal{E}$
		be another $p$-Frobenius structure. There exists $b\in \mathcal{O}_\mathcal{E}^\times$
		with $\alpha_0=\frac{b^\nu}{b}c$, and $b$ is unique up to multiplication by an element in $\mathcal{O}_E^\times$ (recall from \S \ref{subsection: basic definitions} 
		that $E=\mathcal{E}^{\nu=1}$). We have 
		\begin{align*}
			b^{\nu}\equiv b \mod \gothm\mathcal{O}_\mathcal{E},
		\end{align*}
		which implies $b \equiv x \mod  \gothm\mathcal{O}_\mathcal{E}$ for some $x \in \mathbb{F}_p$.
		It follows that $[x^{-1}]b \in 1+\gothm\mathcal{O}_\mathcal{E}$,
		where $[x^{-1}] \in \mathcal{O}_E$ is the Teichmuller lift of $x$. 
	\end{proof}

	\subsection{Wild Artin-Schreier characters}
	\label{subsection: frobenius structures for asw extensions}
	The following result is commonplace in the literature, but not exactly presented 
	in this form (see e.g. \cite[\S 4.1]{Wan-variationNP} for an overview).
	We let $E(x)$ denote the Artin-Hasse exponential and let $\gamma$
	be an element of $\Z_p[\zeta_p]$ with $E(\gamma)=\zeta_{p}$. Note that
	$v_p(\gamma)=\frac{1}{(p-1)}$. 
	
	\begin{proposition} \label{theorem: ASW frobenius structure}
		Let $\nu$ be the $p$-Frobenius endomorphism sending $t$ to $t^p$ and
		let $\sigma=\nu^{a}$. Let $\psi:G_{\mathbb{F}_q((t))} \to \mathcal{O}_L^\times$ and assume $Im(\psi)\cong \Z/p\Z$. Let 
		$K$ 
		be the fixed field of $\psi^{-1}(1)$ and let $d$ be the largest ramification 
		break (upper numbering) of $G_{K/\mathbb{F}_q((t))}$. Then there exists a $p$-Frobenius 
		structure $E_r$ of 
		$\psi$ such that $E_r \in \mathcal{O}_\mathcal{E}^{d(p-1)}\cap 
		\mathcal{E}^{\leq 0}$ and $E_r \equiv 1 \mod \gothm$. Furthermore,
		if $c\in 1+\gothm \mathcal{O}_\mathcal{E}$ is another $p$-Frobenius structure,
		there exists $b\in 1+\gothm \mathcal{O}_\mathcal{E}$ with $E_r=\frac{b^\nu}{b}c$.
	\end{proposition}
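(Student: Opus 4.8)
The plan is to realize the classical Dwork splitting function explicitly over $\mathcal{O}_{\mathcal{E}}$ and then read off its $p$-adic growth. First I would put $\psi$ in standard form: by Artin--Schreier theory $K=\mathbb{F}_q((t))[Y]/(Y^p-Y-\bar{f})$ for some $\bar{f}\in\mathbb{F}_q((t))$, determined up to adding an element of the form $\bar{h}^p-\bar{h}$. Since $t\mathbb{F}_q[[t]]\subseteq\{\bar{h}^p-\bar{h}:\bar{h}\in\mathbb{F}_q((t))\}$ and $a_{-pj}t^{-pj}\equiv a_{-pj}^{1/p}t^{-j}$ modulo such elements, we may assume $\bar{f}=\sum_{i=0}^{d}a_{-i}t^{-i}$ with $p\nmid i$ whenever $i\geq 1$ and $a_{-i}\neq 0$; as $\psi$ is wildly ramified we have $d\geq 1$, and for a $\Z/p\Z$-extension the unique upper-numbering ramification break equals this reduced pole order, so this $d$ is the one in the statement. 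Let $f=\sum_{i=0}^{d}[a_{-i}]t^{-i}\in\mathcal{E}^{\leq 0}$ be the Teichm\"uller lift, so that $R_1=\mathcal{O}_{\mathcal{E}}[Y]/(Y^p-Y-f)$ is a finite \'etale $\mathcal{O}_{\mathcal{E}}$-algebra lifting the cover.

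Next I would invoke the standard construction of the Frobenius structure. Extending $\nu$ to $R_1$ by solving $\nu(Y)^p-\nu(Y)=f^\nu$ with $\nu(Y)\equiv Y+f\bmod\,\gothm R_1$ (the correction terms being forced by $[x]^p=[x^p]$ together with convergence of an Artin--Hasse-type series), identifying $M_\psi$ with the free rank-one $\mathcal{O}_{\mathcal{E}}$-module generated by the Dwork invariant element, and computing $\varphi_0=\nu\otimes\mathrm{id}$ on it, one obtains --- as in \cite[Ch.~5]{Dwork-book}, \cite{Robba-lower_bounds_NP}, or \cite[\S 4.1]{Wan-variationNP} --- that
\[ E_r \;:=\; \prod_{i=0}^{d} E\!\left(\gamma [a_{-i}] t^{-i}\right) \]
is a $p$-Frobenius structure of $M_\psi$ (the $i=0$ factor being a constant in $\mathcal{O}_L^\times$). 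For general $q=p^a$ one checks in addition that the associated $q$-Frobenius structure $\prod_{j=0}^{a-1}\nu^j(E_r)=\prod_{i,j}E(\gamma[a_{-i}^{p^j}]t^{-ip^j})$ is Dwork's $q$-Frobenius structure for the exponential sum of $\bar{f}$; this is the one point requiring care, since Teichm\"uller lifts of $\mathbb{F}_q$-elements need not be $\nu$-fixed.

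The three asserted properties are then a direct computation. Writing $E(z)=\sum_k c_k z^k$ with $c_k\in\Z_p$ and using $v_p(\gamma)=\frac{1}{p-1}$, the coefficient of $t^{-ik}$ in $E(\gamma[a_{-i}]t^{-i})$ has $p$-adic valuation $v_p(c_k)+\frac{k}{p-1}\geq\frac{ik}{i(p-1)}$, so $E(\gamma[a_{-i}]t^{-i})\in\mathcal{O}_{\mathcal{E}}^{i(p-1)}\subseteq\mathcal{O}_{\mathcal{E}}^{d(p-1)}$ for all $0\le i\le d$. As $\mathcal{O}_{\mathcal{E}}^{d(p-1)}$ is a ring and each factor involves only nonpositive powers of $t$, we get $E_r\in\mathcal{O}_{\mathcal{E}}^{d(p-1)}\cap\mathcal{E}^{\leq 0}$; and modulo $\gothm$ every factor is $\equiv 1$ since $\gamma\in\gothm$, so $E_r\equiv 1\bmod\gothm$. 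For uniqueness I would argue exactly as in Proposition \ref{proposition: unramified character proposition}: if $c\in 1+\gothm\mathcal{O}_{\mathcal{E}}$ is another $p$-Frobenius structure then $E_r=\frac{b^\nu}{b}c$ for some $b\in\mathcal{O}_{\mathcal{E}}^\times$, unique up to $\mathcal{O}_E^\times$; from $b^\nu\equiv b\bmod\gothm\mathcal{O}_{\mathcal{E}}$ the reduction $\bar{b}\in\mathbb{F}_q((t))^\times$ satisfies $\bar{b}^p=\bar{b}$, hence $\bar{b}\in\mathbb{F}_p^\times$, and replacing $b$ by $[\bar{b}^{-1}]b$ (which does not change the quotient, as $\nu$ fixes the Teichm\"uller lift of $\bar{b}^{-1}$) puts $b\in 1+\gothm\mathcal{O}_{\mathcal{E}}$.

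I expect the main obstacle to be the middle step: verifying that the explicit Artin--Hasse product really is a $p$-Frobenius structure in the precise sense of \S \ref{subsection: local rank one crystals}, i.e.\ correctly tracking the identification $M_\psi\cong\mathcal{O}_{\mathcal{E}}e_0$ and the descent $\varphi_0^a=\varphi$; the standard-form reduction and the valuation bookkeeping are routine by comparison.
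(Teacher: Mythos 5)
Your construction of $E_r$ as the Artin--Hasse product $\prod_{i=0}^{d}E(\gamma[a_{-i}]t^{-i})$ is exactly the paper's, as is the normalization of $\bar f$ to have a prime-to-$p$ pole of order $d$, the valuation estimate placing $E_r$ in $\mathcal{O}_{\mathcal{E}}^{d(p-1)}\cap\mathcal{E}^{\le 0}$, and the uniqueness argument by reduction to the unramified case (Proposition \ref{proposition: unramified character proposition}). Where the two arguments genuinely differ is precisely at the step you single out as the main obstacle. You propose to work entirely over $\mathcal{O}_{\mathcal{E}}$, extend $\nu$ to $R_1=\mathcal{O}_{\mathcal{E}}[Y]/(Y^p-Y-f)$, locate a Dwork invariant element, and read off $\varphi_0$; this can be pushed through (it is Dwork's and Robba's original route) but requires unwinding the identification $M_\psi\cong(R_1\otimes V)^{\mathrm{Gal}}\cong\mathcal{O}_{\mathcal{E}}$ and tracking how $\nu\otimes\mathrm{id}$ acts on a generator. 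The paper instead notices that your normalized $\bar f\in\mathbb{F}_q[t^{-1}]$ makes $y^p-y=\bar f$ a finite \'etale cover of $\mathbb{P}^1-\{0\}=\Spec(\mathbb{F}_q[t^{-1}])$, so $\psi$ extends to a character $\psi^{\mathrm{ext}}$ of $\pi_1(\mathbb{P}^1-\{0\})$ whose $F$-crystal lives over the Tate algebra $\mathcal{O}_L\langle t^{-1}\rangle$. Over that ring one has a pointwise criterion: $a$ is a $p$-Frobenius structure for $M_{\psi^{\mathrm{ext}}}$ iff $\prod_{i=0}^{ak-1}a([x])^{\nu^i}=\psi^{\mathrm{ext}}(\mathrm{Frob}_x)$ for every closed point $x$ of $\mathbb{G}_m$, and Chebotarev density makes this criterion decisive. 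The classical Artin--Hasse identity $\prod_{i=0}^{k-1}E(\gamma z^{p^i})^{\text{(appropriate)}}=\zeta_p^{\mathrm{Tr}(z)}$ then verifies the criterion for $E_r$ with no module bookkeeping, and tensoring up to $\mathcal{O}_{\mathcal{E}}$ yields the local statement. This global-to-local move is what dissolves your ``main obstacle''; it also neatly handles the $q=p^a$ subtlety you flag (Teichm\"uller lifts not being $\nu$-fixed), since the pointwise criterion is stated directly in terms of the full Frobenius orbit. So: same $E_r$, same estimates, but the paper verifies the Frobenius-structure property by a Chebotarev argument over $\mathbb{G}_m$ rather than by a local extension of $\nu$ to the Artin--Schreier cover.
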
 
	
	\begin{remark}
		Since $G_{K/F}\cong \Z/p\Z$, there is only one ramification break. In this case,
		the lower numbering agrees with the upper numbering (see 
		the example at the end of Chapter IV \S 3 in \cite{Serre-local_fields}).
	\end{remark}
	
	\begin{proof}
		The extension $K/\mathbb{F}_q((t))$ is given by an equation $y^p-y=r$, where
		$r \in \mathbb{F}_q((t))$. This $r$ is unique up to addition by an element
		of $(\textbf{Fr}-1)\mathbb{F}_q((t))$ (here $\textbf{Fr}$ is the $p$-th power map).
		In particular, we may take $r$ to be of the form
		\begin{align*}
		r &= \sum_{j=0}^{d} r_{j}t^{-j}.
		\end{align*}
		The equation $y^p-y=r$ also defines a finite \`etale $\mathbb{F}_q[t^{-1}]$-algebra
		$B$ that fits into a commutative diagram:
		\begin{equation*} 
		\begin{tikzcd}
		\Spec(K) \arrow[d]\arrow[r] & \Spec(B)\arrow[d] \\
		\Spec(\mathbb{F}_q((t)))\arrow[r] & \mathbb{P}^1-\{0\}=\Spec(\mathbb{F}_q[t^{-1}])
		.
		\end{tikzcd}
		\end{equation*}
		In particular, $\psi$ extends to a representation $\psi^{ext}:Gal(B/\mathbb{F}_q[t^{-1}]) \to \mathcal{O}_L^\times$. Let $\mathcal{O}_L\ang{t^{-1}}\subset \mathcal{O}_\mathcal{E}$ be the Tate algebra
		in $t^{-1}$ with coefficients in $\mathcal{O}_L$. Note that $\nu$ restricts to a
		$p$-Frobenius endomorphism of $\mathcal{O}_L\ang{t^{-1}}$. All projective modules
		over $\mathcal{O}_L\ang{t^{-1}}$ are free, so that $M_{\psi^{ext}}$ is isomorphic to $\mathcal{O}_L\ang{t^{-1}}$ as a $\mathcal{O}_L\ang{t^{-1}}$-module. We see that 
		$M_{\psi}=M_{\psi^{ext}}\otimes_{\mathcal{O}_{L}\ang{t^{-1}}}\mathcal{O}_{\mathcal{E}}$.
		In particular, any $p$-Frobenius structure of $M_{\psi^{ext}}$ is a $p$-Frobenius
		structure of $M_{\psi}$.
		
		A series $a\in \mathcal{O}_L\ang{t^{-1}}$ is a $p$-Frobenius structure for 
		$M_{\psi^{ext}}$ if and only if for every $k\geq 1$ and $x \in \mathbb{P}^1(\mathbb{F}_{q^k})- \{0\}$
		we have
		\begin{align*}
		\prod_{i=0}^{ak-1} a([x])^{\nu^{i}} &= \psi^{ext}(Frob_x) \\
		&=	\zeta_p^{Tr_{\mathbb{F}_{q^k}/\mathbb{F}_p}(r(x))}, 
		\end{align*}
		where $[x]$ is the Teichmuller lift of $x$. This fact is essentially Chebotarev's density theorem. We see that
		\begin{align*}
		E_r &= \prod_{j=0}^{d} E([r_j] t^{-j} \gamma), 
		\end{align*}
		is a $p$-Frobenius structure of $M_{\psi^{ext}}$. Since $E(x) \in \Z_p[[x]]$, it is clear that $E_r \in 
		\Z_p[[\pi_dt^{-1}]] \subset \mathcal{O}_\mathcal{E}^{d(p-1)}\cap 
		\mathcal{E}^{\leq 0}$. The last part of the Proposition is identical to the
		proof of Proposition \ref{proposition: unramified character proposition}.
	\end{proof}

	\subsection{The $F$-crystal associated to $\rho$}
	We now study the Frobenius structure of the $F$-crystal associated to $\rho$. 
	We continue with the setup from \S \ref{section: global bounds}.
	\subsubsection{The global Frobenius structure}
	Let $\mathcal{L}$ be a rank one $\mathcal{O}_L$-module on which $\pi_1(V)$ acts 
	through 
	$\rho$. Let $C\to X$ be the $\Z/p\Z$-cover that trivializes $\rho$. 
	We 
	have $C \times_X V=\Spec(\overline{R})$. Note that $\overline{R}$ is isomorphic to 
	$\overline{B}[x]/\overline{f}(x)$ where $\overline{f}(x)=x^p-x+b$ for some $b \in \overline{B}$. Let
	$f(x)\in B^\dagger[x]$ be a lift of $\overline{f}$. We define 
	$\widehat{R}=\widehat{B}[x]/f(x)$ and $R^\dagger = B^\dagger[x]/f(x)$. The $F$-crystal
	corresponding to $\rho$ is the $\widehat{B}$-module $M=(\widehat{R} \otimes 
	\mathcal{L})^{Gal(C/X)}$, 
	where the Frobenius structure comes from the extension of $\nu:\widehat{B} \to 
	\widehat{B}$ to an endomorphism of $\widehat{R}$. 
	\begin{proposition}\label{proposition: the global frobenius descends to wc ring}
		The subring $R^\dagger$ of $\widehat{R}$ is preserved by $Gal(C/X)$ and $\nu$.
	\end{proposition}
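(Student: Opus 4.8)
The plan is to verify the two assertions --- stability under $Gal(C/X)$ and stability under $\nu$ --- separately, using the Cartesian description \eqref{overconvergent expansion diagram} of $B^\dagger$ as the glueing of $\widehat{B}$ with the overconvergent local rings $\mathcal{O}_{\mathcal{E}_R}^\dagger$ along the local expansions. Since $R^\dagger = B^\dagger[x]/f(x)$ is a finite free $B^\dagger$-module with basis $1, x, \dots, x^{p-1}$, and likewise $\widehat{R}$ is finite free over $\widehat{B}$ on the same basis, an element of $\widehat{R}$ lies in $R^\dagger$ if and only if each of its coordinates in this basis lies in $B^\dagger$; equivalently (by \eqref{overconvergent expansion diagram}) if and only if each coordinate is overconvergent in every tube $]R[$ for $R \in W$. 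So both claims reduce to checking that the relevant operators preserve overconvergence in each local tube.

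First I would treat $Gal(C/X)$. An element $\tau$ of the Galois group acts $B^\dagger$-linearly on $R^\dagger$ (it fixes $\overline{B}$, hence fixes $B^\dagger$, and permutes the roots of $f$; concretely $\tau(x) = x + c_\tau$ for the image $c_\tau \in \mathbb{F}_p \hookrightarrow \mathcal{O}_L$ of $\tau$ under the identification $Gal(C/X) \cong \Z/p\Z$ coming from the equation $x^p - x + b$, so we may lift $\tau$ to the $B^\dagger$-algebra automorphism $x \mapsto x + [c_\tau]$ of $R^\dagger$). Since this is a $B^\dagger$-algebra map and $B^\dagger$ is visibly stable, $R^\dagger$ is stable under $\tau$; one should note that $[c_\tau] \in \Z_p$ so this does land in $R^\dagger$ and not just in $\widehat R$. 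That $M = (\widehat R \otimes \mathcal L)^{Gal(C/X)}$ interacts well with $R^\dagger$ is then formal, though for the statement as written we only need $R^\dagger$ itself to be $Gal(C/X)$-stable.

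The substantive point is stability under $\nu$. The endomorphism $\nu$ of $\widehat B$ extends to $\widehat R$ since $\widehat B \to \widehat R$ is finite étale and $\widehat R$ is $p$-adically complete (this is exactly the extension used to define the Frobenius structure of $M$). To see that this extension carries $R^\dagger$ into itself, I would argue locally: for each $R \in W$ the completion of $\widehat R$ at the corresponding place is a finite étale extension of $\mathcal{E}_R$ (here one must check that $R$ stays a single place or splits into étale pieces in $\widehat R$ --- true since $\rho$, being of order $p$ and tamely ramified nowhere, is at worst wildly ramified at the $\tau_i$ and unramified at the other points of $W$), and $\nu_R$ extends to a $p$-Frobenius endomorphism of this extension; the key fact is that a $p$-Frobenius endomorphism of a finite étale extension of $\mathcal{E}_R$ preserves the overconvergent subring $\mathcal{O}_{\mathcal{E}_R^\dagger}$-algebra inside it --- this is a standard property of Frobenius lifts on dagger algebras (Monsky--Washnitzer), following because overconvergence is detected by a growth condition on the coefficients that is preserved by any continuous ring endomorphism extending $\nu_R$ on a finite étale extension. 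Combining the local statements at all $R \in W$ with the Cartesian square \eqref{overconvergent expansion diagram} (applied to $R^\dagger$ in place of $B^\dagger$, which is legitimate since $R^\dagger$ is finite étale over $B^\dagger$ and the same glueing description holds) yields that $\nu(R^\dagger) \subset R^\dagger$.

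The main obstacle I expect is the local analysis at the ramified points $\tau_i$: one needs that the completion of $\widehat R$ there is genuinely a finite étale (indeed a $\Z/p\Z$-Galois, degree $p$) extension of $\mathcal{E}_{\tau_i}$ defined by the same Artin--Schreier equation $x^p - x + b = 0$, that $\nu$ extends to it compatibly with the local $\nu_{\tau_i}$ (which by the construction of $\eta$ is of Type $1$, $u \mapsto u^p$, at each $\tau_i$), and that this extended Frobenius preserves overconvergence. The ``obvious'' subtlety is that overconvergence of the coordinates of $\nu(x)$ in the basis $1, x, \dots, x^{p-1}$ is not immediate from overconvergence of $x$ --- one must pass through the finite étale structure and invoke that $B^\dagger \to R^\dagger$ is finite étale (established in \S\ref{subsection: expansion around local parameters} for $A^\dagger \to B^\dagger$, and the same argument or a base change gives it here) so that $R^\dagger$ is a direct summand, as a $B^\dagger$-module, of a finite free one, forcing its Frobenius image to have overconvergent coordinates. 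Everything else is bookkeeping with the commutative diagrams already set up.
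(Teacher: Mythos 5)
Your treatment of the Frobenius endomorphism $\nu$ is essentially the paper's argument: pass to the local completions at each $Q\in W$, invoke preservation of the overconvergent subring by the (extended) $p$-Frobenius, and then use the Cartesian square \eqref{overconvergent expansion diagram} to glue. That part is fine.

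The Galois part, however, has a genuine gap. You claim the Galois action on $R^\dagger=B^\dagger[x]/f(x)$ is the $B^\dagger$-algebra map $x\mapsto x+[c_\tau]$, with $[c_\tau]$ the Teichm\"uller lift. But $x\mapsto x+[c_\tau]$ is not a well-defined endomorphism of $R^\dagger$: writing $\overline f(x)=x^p-x+b$ and $f$ for an arbitrary lift, one has
\[
f(x+[c])-f(x)\equiv \sum_{i=1}^{p-1}\binom{p}{i}[c]^{p-i}x^i \pmod{\text{higher-order terms}},
\]
which is a nonzero element of $pB^\dagger[x]$ of degree $<p$; hence $f(x+[c])\ne 0$ in $R^\dagger$ and $x+[c]$ is not a root of $f$. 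The true Galois action on $\widehat R$ is the unique lift of the action on $\overline R$ produced by \'etale lifting over $\widehat B$, and it sends $x$ to a root of $f$ congruent to $x+c$ mod $\gothm$ but not literally equal to $x+[c]$. So the congruence $\tau(x)\equiv x+c_\tau$ that you wrote correctly does not upgrade to the Teichm\"uller formula, and your deduction ``so $R^\dagger$ is stable'' is unsupported. The paper avoids this by treating $g\in\mathrm{Gal}(C/X)$ and $g=\nu$ uniformly with exactly the local argument you used for $\nu$: expand $z^g=\sum b_iu^i$ with $b_i\in\widehat B$, check each $b_i$ lies in $\mathcal O_{\mathcal E_Q^\dagger}$ by passing to the $g$-stable extension $\mathcal O_{\mathcal E_P^\dagger}$ at a point $P$ above $Q$, and conclude via the Cartesian square. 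If you want a purely algebraic alternative for the Galois half, note instead that $B^\dagger\to R^\dagger$ is finite \'etale, so the $\mathrm{Gal}(\overline R/\overline B)$-action lifts uniquely to a $B^\dagger$-algebra action on $R^\dagger$, and this lift must agree with the restriction of the Galois action on $\widehat R$; but the explicit Teichm\"uller formula should be dropped.
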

	\begin{proof}
		Let $g$ be either $\nu$ or an element of $Gal(C/X)$. Let $z \in R^\dagger$ 
		and let
		$u$ be the image of $x$ in $R^\dagger$. Then $z= \sum_{i=0}^{p-1} a_i u^i$, where
		$a_i \in B^\dagger$. Similarly, we write
		$z^g=\sum_{i=0}^{p-1} b_i u^i$, where $b_i \in \widehat{B}$ (here $z^g$
		is the image of $z$ by the action of $g$). Consider a point
		$Q\in W$ and let $P \in C$ be a point in $C \times_X \{Q\}$. This 
		gives rise to an extension $\mathcal{O}_{\mathcal{E}_P^\dagger}$ of 
		$\mathcal{O}_{\mathcal{E}_{Q}^\dagger}$, that is preserved by $g$. Thus 
		$b_i 
		\in \mathcal{O}_{\mathcal{E}_{Q}^\dagger}$, and we see that $b_i \in 
		B^\dagger$ from the Cartesian diagram \eqref{overconvergent expansion 
			diagram}. This proves the 
		proposition.
	\end{proof}
	\begin{corollary} \label{corollary: Frobenius descends to something OC}
		Let $M^\dagger = (R^\dagger \otimes 
		\mathcal{L})^{Gal(C/X)}$. The map $M^\dagger\otimes_{B^\dagger} \widehat{B}\to M$ 
		is a $\nu$-equivariant isomorphism.   
	\end{corollary}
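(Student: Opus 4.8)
The plan is to deduce the corollary directly from Proposition \ref{proposition: the global frobenius descends to wc ring} together with the fact that $\widehat{R} = \widehat{B}[x]/f(x)$ is obtained from $R^\dagger = B^\dagger[x]/f(x)$ by base change along $B^\dagger \to \widehat{B}$, and that taking $\G(C/X)$-invariants commutes with this flat base change. First I would observe that $R^\dagger \otimes_{B^\dagger} \widehat{B} \cong \widehat{R}$ as $\widehat{B}$-algebras, simply because tensoring $B^\dagger[x]/f(x)$ with $\widehat{B}$ over $B^\dagger$ and using that $f(x) \in B^\dagger[x] \subset \widehat{B}[x]$ yields $\widehat{B}[x]/f(x)$; this isomorphism is $\G(C/X)$-equivariant and $\nu$-equivariant since, by Proposition \ref{proposition: the global frobenius descends to wc ring}, both $\G(C/X)$ and $\nu$ preserve $R^\dagger$ and act compatibly with their actions on $\widehat{R}$. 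Tensoring with the rank one module $\mathcal{L}$ and using that $\mathcal{L}$ is finite free over $\mathcal{O}_L$ (so the tensor and base change operations all commute), we get a $\G(C/X)$- and $\nu$-equivariant isomorphism $(R^\dagger \otimes \mathcal{L}) \otimes_{B^\dagger} \widehat{B} \cong \widehat{R} \otimes \mathcal{L}$.

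Next I would take $\G(C/X)$-invariants. The key point is that the inclusion $B^\dagger \hookrightarrow \widehat{B}$ is flat — indeed it is the base change to $\Q_p$-free-integral-models of a weak completion, and more concretely $B^\dagger$ and $\widehat{B}$ are both finite étale over $A^\dagger$ and $\widehat{A}$ respectively, with $A^\dagger \to \widehat{A}$ flat. Since $\G(C/X)$ is a finite group whose order $p$ is invertible after tensoring with $\Q_p$ (and here we may simply argue with the averaging idempotent $e = \frac{1}{p}\sum_{g \in \G(C/X)} g$ after inverting $p$, or use that $M^\dagger$ and $M$ are direct summands cut out by this idempotent), the functor $N \mapsto N^{\G(C/X)}$ is exact and commutes with flat base change. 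Applying this to $N = R^\dagger \otimes \mathcal{L}$ over $B^\dagger$, we obtain
\begin{align*}
M^\dagger \otimes_{B^\dagger} \widehat{B} &= (R^\dagger \otimes \mathcal{L})^{\G(C/X)} \otimes_{B^\dagger} \widehat{B} \\
&\cong \big( (R^\dagger \otimes \mathcal{L}) \otimes_{B^\dagger} \widehat{B} \big)^{\G(C/X)} \\
&\cong (\widehat{R} \otimes \mathcal{L})^{\G(C/X)} = M,
\end{align*}
and this chain of isomorphisms is $\nu$-equivariant because each step is: the base-change isomorphism respects $\nu$ by Proposition \ref{proposition: the global frobenius descends to wc ring}, and $\nu$ commutes with the idempotent $e$ since $\nu$ commutes with the $\G(C/X)$-action.

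The main obstacle — really the only subtlety — is justifying that $\G(C/X)$-invariants commute with the base change $- \otimes_{B^\dagger} \widehat{B}$; once one works with the idempotent $e = \frac{1}{p}\sum_g g$ this is immediate since $M^\dagger \otimes \Q_p = e \cdot (R^\dagger \otimes \mathcal{L} \otimes \Q_p)$ is a direct summand and base change preserves direct summands. One should also note that $e$ makes sense integrally here only after observing that $M^\dagger$ is already the honest fixed submodule of $R^\dagger \otimes \mathcal{L}$ and that this fixed submodule is a direct summand as a $B^\dagger$-module because $R^\dagger$ is finite étale (hence projective) over $B^\dagger$ of rank $p$; this projectivity is exactly what lets the invariants be computed Zariski-locally and hence commute with flat base change without inverting $p$. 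Everything else is a formal diagram chase tracking the $\nu$-action through the isomorphisms.
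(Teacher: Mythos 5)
Your overall strategy matches what the paper (implicitly) does: identify $R^\dagger \otimes_{B^\dagger} \widehat{B}$ with $\widehat{R}$, note that $\nu$- and $Gal(C/X)$-equivariance of this identification is exactly Proposition \ref{proposition: the global frobenius descends to wc ring}, and then commute $Gal(C/X)$-invariants past the base change $- \otimes_{B^\dagger} \widehat{B}$ using flatness. The place where your justification goes wrong is the key commutation step. Since $|Gal(C/X)| = p$ and $B^\dagger$ is an $\mathcal{O}_L$-algebra in which $p$ is \emph{not} a unit, the averaging idempotent $e = \tfrac{1}{p}\sum_{g} g$ does not exist in $B^\dagger[Gal(C/X)]$, so the claim that $M^\dagger$ and $M$ are direct summands cut out by $e$ is unavailable integrally, and so is the assertion that $(-)^{Gal(C/X)}$ is exact. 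Your fallback paragraph gestures at projectivity of $R^\dagger$ over $B^\dagger$ and ``computing invariants Zariski-locally,'' but projectivity of $R^\dagger$ does not by itself produce a $Gal(C/X)$-equivariant splitting of $M^\dagger \hookrightarrow R^\dagger \otimes \mathcal{L}$ without inverting $p$, so this does not repair the argument as stated.

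The correct point is simpler and requires none of this. For any $B^\dagger[Gal(C/X)]$-module $N$, the invariants $N^{Gal(C/X)}$ are the kernel of the map $N \to N^{|Gal(C/X)|}$ sending $n \mapsto (gn - n)_{g}$, i.e.\ a finite limit, and kernels of maps of $B^\dagger$-modules commute with flat base change. That $B^\dagger \to \widehat{B}$ is flat is exactly what the paper uses elsewhere via the statement that $B^\dagger$ is a Noetherian Zariski ring whose $p$-adic completion is $\widehat{B}$ (your alternative reduction to the flatness of $A^\dagger \to \widehat{A}$ via the finite \'etale extensions is also valid). With that in hand, the chain
\[
M^\dagger \otimes_{B^\dagger} \widehat{B} \;=\; (R^\dagger \otimes \mathcal{L})^{Gal(C/X)} \otimes_{B^\dagger} \widehat{B} \;\cong\; \big((R^\dagger \otimes \mathcal{L}) \otimes_{B^\dagger} \widehat{B}\big)^{Gal(C/X)} \;\cong\; (\widehat{R} \otimes \mathcal{L})^{Gal(C/X)} \;=\; M
\]
is an isomorphism, and $\nu$-equivariance follows because each map in the chain is $\nu$-equivariant by Proposition \ref{proposition: the global frobenius descends to wc ring}. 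So: right skeleton, wrong justification of the commutation step; replace the idempotent argument with ``invariants are a kernel, kernels commute with flat base change.''
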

	\begin{lemma} \label{lemma: the $F$-crystal is free}
		The module $M^\dagger$ (resp. $M$) is a free $B^\dagger$ (resp. $\widehat{B}$-module). 
		Furthermore, $M$ has a $p$-Frobenius structure $\alpha_0$ contained in 
		$1 + \gothm B^\dagger$. 
	\end{lemma}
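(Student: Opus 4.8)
The plan is to establish freeness first, then use it together with the local analysis from \S\ref{subsection: local rank one crystals} to produce the $p$-Frobenius structure. For freeness of $M^\dagger$ over $B^\dagger$ (and hence of $M$ over $\widehat B$ by Corollary \ref{corollary: Frobenius descends to something OC}), the key observation is that $M^\dagger$ is a rank one projective module over $B^\dagger$: it is a direct summand of $R^\dagger \otimes \mathcal{L}$ as a $B^\dagger$-module via the averaging idempotent $\frac{1}{|Gal(C/X)|}\sum_{g} g$ (note $|Gal(C/X)|=p$ is invertible in $B^\dagger$ since $B^\dagger$ is a $\Q_p$-algebra after tensoring, but in fact we work over $\mathcal{O}_L[\zeta_p]$ where $p$ is still not a unit --- so instead one uses that $R^\dagger$ is finite étale over $B^\dagger$ and descent along the $\Z/p\Z$-torsor gives that $M^\dagger$ is a line bundle on $\Spec(B^\dagger)$). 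To conclude it is actually free, I would invoke that $B^\dagger$, being the weak completion of the coordinate ring of an affine curve, has trivial Picard group --- or, more concretely, that the order of $\rho$ is $p$, so $M^\dagger{}^{\otimes p}$ is the $F$-crystal attached to the trivial character, which is free, and a line bundle whose $p$-th tensor power is trivial on a curve of the relevant type is itself trivial after the allowed increase of $q$ (adjoining roots). The cleanest route is probably to note $\rho$ corresponds to an Artin--Schreier class $\overline f \in \overline B/(\mathbf{Fr}-1)\overline B$ and build the Frobenius structure globally by the same Artin--Hasse recipe as in Proposition \ref{theorem: ASW frobenius structure}, which simultaneously proves freeness and the congruence.

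Concretely, following Proposition \ref{theorem: ASW frobenius structure}: choose a lift $f \in B^\dagger$ of $\overline f$ (so $\overline f(x) = x^p - x + b$ with $b$ the relevant element), and set $\alpha_0 = E(\gamma \cdot \tilde b)$ where $\tilde b \in B^\dagger$ lifts $b$ and $E$ is the Artin--Hasse exponential, with $\gamma$ as in \S\ref{subsection: frobenius structures for asw extensions} satisfying $E(\gamma) = \zeta_p$. Since $E(x) \in \Z_p[[x]]$ and $v_p(\gamma) = \frac{1}{p-1} > 0$, the product $E(\gamma \tilde b)$ converges in $B^\dagger$ (overconvergence is preserved because $\tilde b \in B^\dagger$ and $E$ has bounded-denominator coefficients), and evaluating at Teichmüller points recovers $\prod_{i=0}^{a-1}\alpha_0^{\nu^i}([x]) = \zeta_p^{\mathrm{Tr}(b(x))} = \rho(Frob_x)$ by the same Chebotarev argument. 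This exhibits $\alpha_0$ as a $p$-Frobenius structure; since $E(0)=1$ and $\gamma \tilde b \in \gothm B^\dagger$, we get $\alpha_0 \in 1 + \gothm B^\dagger$ immediately. Freeness of $M^\dagger$ then follows because $M^\dagger = B^\dagger e_0$ with $\varphi_0(e_0) = \alpha_0 e_0$: the existence of a $p$-Frobenius structure $\alpha_0 \in (B^\dagger)^\times$ is equivalent to triviality of the line bundle.

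The main obstacle I anticipate is the passage from "the local Artin--Hasse construction of Proposition \ref{theorem: ASW frobenius structure}" to a \emph{global} section over all of $B^\dagger$ --- in the local case the relevant ring $\mathcal{O}_L\langle t^{-1}\rangle$ had only one "pole" to worry about, whereas here $\overline f$ may have poles at several of the $\tau_i$, and one must check that the single element $\tilde b$ can be chosen in $B^\dagger$ (not just in each $\mathcal{O}_{\mathcal{E}_{\tau_i}^\dagger}$ separately) and that $E(\gamma\tilde b)$ overconverges in every tube $]R[$ simultaneously. This is handled by the Cartesian diagram \eqref{overconvergent expansion diagram}: overconvergence can be checked one pole at a time, and at each $\tau_i$ the local Frobenius endomorphism is of type $u \mapsto u^p$ by the construction of $\eta$, so Proposition \ref{theorem: ASW frobenius structure} applies verbatim to give local overconvergence, which glues. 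A secondary technical point is that the Artin--Hasse product is genuinely a $p$-Frobenius structure for $\nu$ rather than merely for $\sigma$; this is why we insisted the local Frobenius at each $\tau_i$ is $u \mapsto u^p$, and the verification is the identity $\prod_{i=0}^{a-1}\alpha_0^{\nu^i} = \alpha$ recorded in \S\ref{subsection: local rank one crystals}.
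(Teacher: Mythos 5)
Your proposal diverges significantly from the paper's argument, and the main route you advocate --- globalizing the Artin--Hasse splitting function --- has a gap that I don't see how to repair. The Artin--Hasse recipe of Proposition \ref{theorem: ASW frobenius structure} is tied to the particular $p$-Frobenius endomorphism $\nu(t)=t^p$: the verification that $\prod_{i=0}^{ak-1}a([x])^{\nu^i}$ collapses to the additive character $\zeta_p^{\mathrm{Tr}(r(x))}$ uses crucially that $\nu^i(t^{-j})=t^{-jp^i}$, so that Dwork's telescoping applies monomial by monomial. On $\mathcal{B}^\dagger$ the global Frobenius $\nu$ is only of this form near the points $Q$ with $\eta(Q)\in\{0,\infty\}$; near the points above $1$ it is $u\mapsto\sqrt[p-1]{(u^{p-1}-1)^p+1}$, and in any case $\overline B$ has no global coordinate in which to expand $b$ as a finite sum of monomials with Teichm\"uller coefficients. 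Your formula $\alpha_0=E(\gamma\tilde b)$ is also incorrect even in the local case: the Artin--Hasse exponential is not additive, so $E\bigl(\gamma\sum_j[r_j]t^{-j}\bigr)$ differs from the actual splitting function $\prod_j E([r_j]\gamma t^{-j})$ used in Proposition \ref{theorem: ASW frobenius structure}. Thus the ``cleanest route'' you describe does not produce a $p$-Frobenius structure for the global $\nu$.

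Your first route (descent along the $G$-torsor, then triviality of the resulting line bundle) is also incomplete. You correctly observe that the averaging idempotent is unavailable because $p$ is not a unit in $\mathcal{O}_L$, and that finite \'etale descent still exhibits $M^\dagger$ as a line bundle; but the triviality of this line bundle requires an input about $\mathrm{Pic}(B^\dagger)$ that you neither supply nor have grounds to expect for a general affine curve. The claim that a line bundle whose $p$-th power is trivial must itself be trivial after increasing $q$ is precisely the assertion that $\mathrm{Pic}(B^\dagger)$ has no $p$-torsion, which is again unproved.

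The paper avoids both difficulties with a hands-on $p$-adic argument. It shows $\Ext^1_{\widehat B[G]}(\widehat B,\overline R)=0$ by localizing at each maximal ideal of $\widehat B$ and applying Eckmann--Shapiro; this forces surjectivity of $(\widehat R/\gothm^{k+1}\otimes\mathcal L)^G\to(\widehat R/\gothm^{k}\otimes\mathcal L)^G$ for all $k$, hence, passing to the limit, an element $z\in\widehat R^{\rho^{-1}}$ with $z\equiv 1\bmod\gothm\widehat R$. A successive-approximation argument then shows $\widehat R^{\rho^{-1}}=\widehat B z$, so $M$ is free, and $\alpha_0=z^\nu/z\equiv 1\bmod\gothm$ is automatic. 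Freeness of $M^\dagger$ (hence $\alpha_0\in B^\dagger$) follows because $B^\dagger$ is a Zariski ring. No splitting function and no Picard computation are needed.
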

	\begin{proof}
		Let $G=Gal(C/X)$. We claim that $M=\Ext_{\widehat{B}[G]}^1(\widehat{B}, \overline{R})$ is trivial, where
		$G$ acts trivially on $\widehat{B}$. We will view $\Ext_{\widehat{B}[G]}^1(\widehat{B}, \overline{R})$ as
		a $\widehat{B}$-module and show that its support is empty.
		Let $x$ be a maximal ideal of $\widehat{B}$ and let $\widehat{B}_x$
		be the localization at $x$. If $\gothm \not\subset x$, then we see that
		\begin{align*}
			\widehat{B}_x \otimes_{\widehat{B}} M &= \Ext_{\widehat{B}_x[G]}^1(\widehat{B}_x,\widehat{B}_x \otimes_{\widehat{B}} \overline{R})=\{0\}.
		\end{align*}
		Now assume that $\gothm \subset x$. Since $\overline{R}$ is an \`etale $G$-cover of $\overline{B}$,
		we see that we see that $\overline{R} \otimes_{\widehat{B}} \widehat{B}_x$
		is a free $\overline{B}_x[G]$-module of rank one. The Eckmann-Shapiro Lemma gives
		\begin{align*}
			\widehat{B}_x \otimes_{\widehat{B}} \Ext_{\widehat{B}[G]}^1(\widehat{B},\overline{R}) &\cong 
			\Ext_{\widehat{B}_x[G]}^1(\widehat{B}_x,\overline{B}_x[G]) \\
			&\cong \Ext^1_{\widehat{B}_x}(\widehat{B}_x,\overline{B}_x), 
		\end{align*}
		which is trivial since $\widehat{B}_x$ is a free $\widehat{B}_x$-module.
		
		Using the vanishing of $M$, we can prove
		inductively that 
		\[(\widehat{R}/\gothm^{k+1} \otimes_{\mathcal{O}_L} \mathcal{L})^G 
		\to (\widehat{R}/\gothm^{k} \otimes_{\mathcal{O}_L} \mathcal{L})^G\]
		is surjective. Passing to the limit, we see that \[(\widehat{R} \otimes_{\mathcal{O}_L} \mathcal{L})^G 
		\to (\overline{R})^G=\overline{B}\] is surjective. This means
		there exists $z \in \widehat{R}$ with $z \equiv 1 \mod \gothm \widehat{R}$
		contained in $\widehat{R}^{\rho^{-1}}$, the $\rho^{-1}$-isotypical subsace of $\widehat{R}$. 
		Now let $z_0 \in \widehat{R}^{\rho^{-1}}$. Then for some $k\geq 0$ we have 
		$z_0 \equiv \pi_\circ^k a_k \mod \pi_\circ^{k+1}$, where $a_k \in \widehat{B}$. 
		Then $\pi_\circ^k a_kz-z_0$ is in the $\widehat{R}^{\rho^{-1}}$ and is divisible by $\pi_\circ^{k+1}$.
		By continuing this approximation, we see $\widehat{R}^{\rho^{-1}}=\widehat{B}z$,
		which proves the result for $M$.
		To prove $M^\dagger$ is free, we remark that $B^\dagger$
		is a Zariski ring, so that the functor $-\otimes \widehat{B}$ is fully faithful.
		The lemma follows from Corollary \ref{corollary: Frobenius descends to something OC}.
		
	\end{proof}

	\subsubsection{The local Frobenius structures and $U_p$-computations}
	\label{subsubsection: the local frob and Up}
	We fix $\alpha_0$ as in Lemma \ref{lemma: the $F$-crystal is free} and
	$\alpha=\prod_{i=0}^{a-1} \alpha_0^{\nu^i}$. Let $Q \in W$ with $Q=P_{*,i}$. There is a map
	$\overline{B} \to \mathbb{F}_q((u_Q))$, where we expand each function on $V$
	in terms of the parameter $u_Q$. This gives a point  
	$\Spec(\mathbb{F}_q((u_{Q}))) 
	\to V$. By pulling back $\rho$ along this point we obtain
	a local representation $\rho_{Q}: 
	G_{\mathbb{F}_q((u_Q))} \to 
	\mathcal{O}_L^\times$, where $G_{\mathbb{F}_q((u_Q))}$ is the absolute Galois group of 
	$\mathbb{F}_q((u_Q))$. There are three cases we need to consider.
	The first case is when $*=1$. In this case $\rho_Q$ is unramified. This is because $\rho$ is only
	ramified at the points $\tau_1,\dots,\tau_{\mathbf{m}}$ and by Lemma \ref{lemma: map to p1} 
	we have $\eta(\tau_i)\in \{0,\infty\}$. The second case is when $*\in \{0,\infty\}$
	and $\rho_Q$ is unramified. The last case is when $* \in \{0,\infty\}$ and
	$\rho_Q$ is ramified.

	\begin{enumerate}[label=\Roman*.]
		\item If $*=1$, then $\nu_Q$ sends $u_Q \mapsto \sqrt[p-1]{(u_Q^{p-1}+1)^p-1}$ (see
		the end of \S \ref{subsection Global Frobenius and Up}). 
		Since $\rho_Q$ is unramified, we know from Proposition \ref{proposition: unramified character proposition} that 
		there exists $b_Q \in 1 + \gothm\mathcal{O}_{\mathcal{E}_Q}^\dagger$ with
		$c_Q=\frac{b_Q^\nu}{b_Q} \alpha_0 \in 1+\gothm$. 
		We define 
		$\mathcal{M}_Q$ to 
		be 
		a copy of $\mathcal{A}$ inside of $\mathcal{E}_Q^\dagger$ (see \eqref{definition of A} for
		the definition of $\mathcal{A}$). 
		From Proposition \ref{proposition: type 2 Frobenius} we see that:
		\begin{align} 
		\begin{split}\label{UP computation type 1}
		U_p \circ c_Q (p^{a(j+pn)}u_Q^{-(j+pn)}) & 
		\in 
		p^{n}\mathcal{M}_Q, \\
		U_p \circ c_Q (\mathcal{M}_Q) \subset \mathcal{M}_Q. 
		\end{split}
		\end{align}
		\label{Type A Frobenius blurb}
		
		\item Next, consider the case where $*$ is $0$ or $\infty$ and $\rho_{Q}$ 
		is 
		unramified. Then $\nu_Q$ sends $u_Q \mapsto u_Q^p$. As 
		in \ref{Type A Frobenius blurb}, there exists $b_Q \in 
		1+\gothm \mathcal{O}_{\mathcal{E}_Q}^\dagger$ such that 
		$c_Q=\frac{b_Q^\nu}{b_Q} \alpha_0 \in 1+\gothm$. 
		We define $\mathcal{M}_Q$ to be a copy of 
		$\mathcal{O}_{\mathcal{E}}^{\frac{p-1}{p}}$ contained in 
		$\mathcal{E}_Q^\dagger$. By Proposition \ref{type 1 operator property} 
		we have:
		\begin{align}
		\begin{split}\label{UP computation type 2}
		U_p \circ c_Q (\pi^{pn} u_Q^{-n}) &\in p^n\mathcal{M}_Q, \\
		U_p \circ c_Q (\mathcal{M}_Q) &\subset \mathcal{M}_Q.
		\end{split}
		\end{align}
		\item Finally, we consider the case where $*=0$ and $\rho_{Q}$ is ramified. 
		In 
		this case, $Q =\tau_i$ and again $\nu_Q$ sends $u_Q \mapsto u_Q^p$. From Proposition \ref{theorem: 
			ASW frobenius structure}, there exists $b_Q\in 
		1+\gothm \mathcal{O}_{\mathcal{E}_Q}^\dagger$ such that 
		$c_Q=\frac{b_Q^\nu}{b_Q} \alpha_0  \in \mathcal{O}_\mathcal{E}^{d_i(p-1)}$.
		Define $\mathcal{M}_Q$ to be a copy of 
		$\mathcal{O}_\mathcal{E}^{\frac{d_i(p-1)}{p}}$ contained in 
		$\mathcal{E}_Q^\dagger$. 
		By Proposition \ref{type 1 operator property} 
		we have:
		\begin{align}
		\begin{split}\label{UP computation type 3}
		U_p \circ c_Q (\pi_{d_i}^{pn} u_Q^{-n}) &\in 
		\pi_{d_i}^{(p-1)n}\mathcal{M}_Q, \\
		U_p \circ c_Q (\mathcal{M}_Q) &\subset \mathcal{M}_Q.
		\end{split}
		\end{align}
	\end{enumerate}
	
	\subsubsection{Global to semi-local} \label{paragraph: global to semi-local setup}
	We define the following spaces:
	\begin{align*}
	\begin{split} 
	\mathcal{R} &= \bigoplus_{Q \in W} 
	\mathcal{E}_Q,  ~~~~~~
	\mathcal{R}^\dagger = \bigoplus_{Q \in W} 
	\mathcal{E}_Q^\dagger,\\
	\mathcal{S} &= \Bigg(\bigoplus_{\stackrel{*=0,\infty}{i=1}}^{r_*} 
	\mathcal{E}_{P_{*,i}}^{\leq 
		-1} \Bigg) \oplus \Bigg( \bigoplus_{i=1}^{r_1} \mathcal{E}_{P_{*,i}}^{\leq - p} 
	\Bigg ),\\
	\mathcal{S}^{\dagger}&=\mathcal{R}^\dagger \cap \mathcal{S}, \\
	\mathcal{W}&= \bigoplus_{Q\in W} \mathcal{M}_Q \subset \mathcal{R}^\dagger. 
	\end{split}
	\end{align*}
	We define $\mathcal{O}_{\mathcal{R}}$ to be $\bigoplus_{Q \in W} 
	\mathcal{O}_{\mathcal{E}_Q}$ and we define $\mathcal{O}_{\mathcal{R}^\dagger}$ (resp. $\mathcal{O}_{\mathcal{S}}$ and $\mathcal{O}_{\mathcal{S}^\dagger}$) 
	to be $\mathcal{R}^\dagger \cap \mathcal{O}_{\mathcal{R}}$ (resp. $\mathcal{S} \cap \mathcal{O}_{\mathcal{R}}$ and $\mathcal{S}^\dagger \cap \mathcal{O}_{\mathcal{R}}$).
	There is natural projection maps $pr:\mathcal{R} \to \mathcal{S}$, which
	is the direct sum of the projection maps described in \S \ref{subsection: basic definitions}.
	By the definition of each summand of $\mathcal{W}$ we see that 
	\begin{align}\label{equation: projecting onto tails ends up in W}
		\ker(pr)\cap \mathcal{O}_\mathcal{R} &\subset \mathcal{W}.
	\end{align}
	We
	may view $\widehat{\mathcal{B}}$ (resp. $\mathcal{B}^\dagger$) as a subspace of $\mathcal{R}$ (resp. $\mathcal{R}^\dagger)$ from \eqref{overconvergent expansion diagram}. Let $\vec{c}$ (resp. 
	$\vec{b}$) denote the element of $\mathcal{R}$ whose $Q$-coordinate is
	$c_Q$ (resp. $b_Q$). This gives an operator $U_p \circ \vec{c}: \mathcal{R}^\dagger \to \mathcal{R}^\dagger$ and we have
	\begin{align} \label{equation: W is preserved}
		U_p \circ \vec{c} (\mathcal{W}) \subset \mathcal{W}.
	\end{align}
	Note that
	\begin{align}\label{local change of Frobenius eq}
	\frac{\vec{b}^\nu}{ \vec{b}} \alpha_0 &= \vec{c},
	\end{align}
	where multiplication is done coordinate-wise. We remark that
	\begin{align} \label{local change of frob is eq 1 mod p}
	\vec{b} &\equiv 1\mod \gothm.
	\end{align}

	\section{Normed vector spaces and Newton polygons}
	\label{section: newton polygons and functional analysis}
	In this section we study Newton polygons of operators 
	on normed vector spaces over $L$. We refer the reader to 
	\cite{Serre-p-adic_banach} or \cite{Monsky-forma_cohomology3}
	for many standard facts on Fredholm determinants in the $p$-adic setting.
	In an effort to make this article self-contained, we provide specific references when
	necessary.
	\subsection{Normed vector spaces and Banach spaces}
	Let $V$ be a vector space over $L$ with a norm $|\cdot|$
	compatible with the $p$-adic norm $|\cdot |_p$ on $L$. We
	will make the assumption that for every $x \in V$, the norm $|x|$
	lies in $|L|_p$, the norm group of $L$. We say that $V$
	is a \emph{Banach} space if it is also complete. Let $V_0\subset V$ 
	denote the subset consisting of $x \in V$ satisfying $|x|\leq 
	1$
	and let $\overline{V}=V_0/\gothm V_0$. If $W$ is a subspace 
	of $V$, we will
	automatically give $W$ the subspace norm. 
	\begin{definition}
		Let $I$ be a set. We let $\mathbf{s}(I)$ denote the set of families
		$x=(x_i)_{i\in I}$, with $x_i \in L$, such that $|x|=\sup\limits_{i\in I}|x_i|_p < \infty$. 
		Then $\mathbf{s}(I)$ is a Banach space with the norm $|\cdot|$. We let
		$\mathbf{c}(I) \subset \mathbf{s}(I)$ denote the subspace of families $x$
		with $\lim\limits_{i\in I} x_i = 0$.  It is convenient
		to represent $x$ as a sum
		\begin{align*}
			x= \sum_{i \in I} x_i e_i,
		\end{align*}
		where $e_i \in \mathbf{s}(I)$ is the family with $1$ in the $i$-coordinate
		and $0$ in the other coordinates. 
	\end{definition}
	
	\begin{definition}
		An \emph{integral basis} of $V$ is a subset $B(I)=\{e_{i}\}_{i \in I} \subset V$
		such that every $x \in V$
		can be written uniquely as 
		\begin{align}  \label{orthonormal basis definition}
		x &= \sum_{i \in I} x_ie_i,
		\end{align}
		with $|x|=\sup\limits_{i\in I} |x_i|_p$.
		In particular, we may regard $V$ as a subspace of $\mathbf{s}(I)$. 
	\end{definition}
	\begin{definition}
		An \emph{orthonormal basis} of $V$ is an integral basis $B(I)=\{e_{i}\}_{i \in I} \subset V$ with the additional
		assumption that for each $x = \sum_{i \in I} x_ie_i \in V$
		we have $\lim\limits_{i\in I} x_i = 0$. In particular, we may regard
		$V\subseteq \mathbf{c}(I)$, and this inclusion is equality if $V$ is a Banach space. By \cite[Proposition I]{Serre-p-adic_banach},
		every Banach space over $L$ has an orthonormal basis, and thus every Banach space 
		is of the form $\mathbf{c}(I)$. 
	\end{definition}
	
	\begin{example} \label{example: banach space stuff}
		Consider the Banach space $V=\mathcal{O}_L[[t]]\otimes \Q_p$. The set $\{t^n\}_{n \in \Z_{\geq 0}}$
		is an integral basis of $V$ and gives an isomorphism $V \cong \mathbf{s}(\Z_{\geq 0})$.
		From \cite[Lemme I]{Serre-p-adic_banach} any orthonormal basis
		of $V$ reduces to an $\mathbb{F}_q$-basis of $\overline{V}=\mathbb{F}_q[[t]]$,
		and thus must be uncountable.
		The Tate algebra $L\ang{t} \subset V$ is a Banach space,
		which we identify with $\mathbf{c}(\Z_{\geq 0})$. The
		ring of overconvergent functions $L\ang{t}^\dagger$ has $\{t^n\}_{n \in \Z_{\geq 0}}$
		as an orthonormal basis, but is not a
		Banach space as it is not $p$-adically complete.  
	\end{example}
	
	\begin{definition} \label{defintiion: expanding the radius}
		Let $b=(b_i)_{i \in I} \in \mathbf{c}(I)$. We define $\mathbf{s}(I,b) \subset \mathbf{s}(I)$ 
		to be the subspace consisting of families of the form $(x_ib_i)_{i \in I}$
		where $\sup\limits_{i \in I} | x_i|_p<\infty$. Similarly, we define 
		$\mathbf{c}(I,b) \subset \mathbf{s}(I,b)$ to be the subspace consisting of families 
		$(x_ib_i)_{i \in I}$ where $\lim\limits_{i \in I} x_i = 0$. 
		Note that $B(I,b)=\{b_ie_i\}_{i \in I}$ is an integral (resp. orthonormal) basis of
		$\mathbf{s}(I,b)$ (resp. $\mathbf{c}(I,b)$). 
	\end{definition}
	
	\begin{definition} \label{definition: partial ordering}
		We define a partial ordering on $\mathbf{c}(I)$ as follows: for $x=(x_i)_{i \in I},y=(y_i)_{i \in I} \in \mathbf{c}(I)$ we have $x > y$ if $\lim\limits_{i \in I} v_p(y_i)-v_p(x_i)=\infty$.
		Note that if $x>y$ we have $\mathbf{c}(I,y) \subset \mathbf{c}(I,x)$ and $\mathbf{s}(I,y) \subset \mathbf{s}(I,x)$.
	\end{definition}
	
	\begin{example}
		Continuing with Example \ref{example: banach space stuff}, let $m$
		be a rational number and assume
		$\pi_m \in \mathcal{O}_L$.  Consider $y=(\pi_m^n)_{n \in \Z_{\geq 0}} \in \mathbf{c}(\Z_{\geq 0})$.
		Then $\mathbf{c}(\Z_{\geq 0}, y)$ (resp. $\mathbf{s}(\Z_{\geq 0}, y)$)
		corresponds to $L\ang{\pi_m t}$ (resp. $L[[\pi_mt]]$), which are 
		analytic functions on the disc $v_p(x)\geq -\frac{1}{m(p-1)}$ (resp.
		bounded analytic functions on the disc $v_p(x)>- \frac{1}{m(p-1)}$). 
	\end{example}
	\subsubsection{Restriction of scalars to $E$}
	Let $I$ be a set and assume that $V \subset \mathbf{s}(I)$ has $B(I)$
	as an integral basis. We may regard $V$ as a vector space over $E$. Let
	$\zeta_1=1, \zeta_2,\dots,\zeta_a \in \mathcal{O}_L$ be elements
	that reduce to a basis of $\mathbb{F}_q$ over $\mathbb{F}_p$ modulo $\pi_\circ$. Let 
	$I_E=I \times \{1,\dots, a\}$. We define 
	\begin{align*}
	B(I_E) &= \{ \zeta_j e_i\}_{(i,j)\in I_E}.
	\end{align*}
	Note that $B(I_E)$ is an integral basis of $V$ over $E$. 
	We let $\mathbf{s}(I_E)$ (resp. $\mathbf{c}(I_E)$) denote the 
	set of families
	$x=(x_{(i,j)})_{(i,j)\in I_E}$, with $x_{(i,j)} \in E$, such that $|x|=\sup\limits_{(i,j)\in I_E}|x_{(i,j)}| < \infty$
	(resp. $\lim\limits_{(i,j)\in I_E} x_{(i,j)} = 0$). 
	For $x \in \mathbf{c}(I_E)$, we define 
	$\mathbf{c}(I_E,x)$ and $\mathbf{s}(I_E,x)$ analogous to Definition \ref{defintiion: expanding the radius} and we define the partial ordering on $\mathbf{c}(I_E)$ analogous to Definition \ref{definition: partial ordering}. Note that $\mathbf{s}(I_E)$ (resp. $\mathbf{c}(I_E)$) can be naturally identified 
	with $\mathbf{s}(I)$ (resp. $\mathbf{c}(I_E)$) as normed vector spaces over $E$ and
	this identification respects the two partial orderings. It will be convenient to start with $x \in \mathbf{c}(I)$ and then consider
	$\mathbf{c}(I_E,x)$ using this identification.
	
	\subsection{Generalities about Newton polygons}
	Let $\alpha \in \mathbb{R}_{\geq 0} \cup \infty$. 
	A \emph{polygonal segment} $P$ of length $\alpha$ is
	a graph of points $(x,f(x))$ where $x \in [0,\alpha]$
	and $f:[0,\alpha] \to \mathbb{R}$ is a continuous piecewise
	linear function. We say that a polygonal segment $P$ is a \emph{Newton polygon}
	if $\alpha \in \Z_{\geq 0} \cup \infty$
	and $f$ satisfies the following properties:
	\begin{enumerate}
		\item $f(0)=0$
		\item For any integer $i\in [0,\alpha)$,
		the function $f(x)$ is linear on the domain $x\in [i,i+1]$
		with slope $m_i\geq 0$.
		\item The $m_i$ are nondecreasing, i.e., $m_{i+1}\geq m_i$ for $0\leq i < 
		\alpha$.
		\item If $\alpha=\infty$, then the $m_i$ are unbounded, i.e. 
		$\lim\limits_{i\to\infty} 
		m_i=\infty$.
	\end{enumerate}
	We will refer to the multiset $\{m_i\}_{i \in I}$ as the \emph{slope-set} of $P$
	and its elements as the slopes of $P$.
	Note that the slope-set of a Newton polygon $P$ determine $P$ entirely. In 
	particular,
	let $I$ be a countable set and let $N=\{n_i\}$
	be a multiset of nonnegative numbers indexed by $I$. Assume that
	for any $r>0$, the subset 
	\begin{align*}
	N_r &= \{ n \in N ~|~n<r\}
	\end{align*}
	is finite. Then there exists a unique Newton polygon
	$P_N$ whose slope-set is $N$.
	
	Let us now describe some operations on the set of Newton polygons.
	If $P$ and $P'$ are Newton polygons whose slope-sets
	are $N$ and $N'$, we define the
	concatenation of $P$ and $P'$ to be the Newton polygon
	\[ P \sqcup P' = P_{N \sqcup N'}. \]
	That is, $P \sqcup P'$ is the Newton polygon whose slope-set
	is the disjoint union of $N$ and $N'$. Next, let $r>0$.
	We define the $r$-truncation $P_{<r}$ of $P$ to be the Newton polygon
	whose slope-set is $N_r$. Note that $P_{<r}$ necessarily
	has finite length. We may also scale Newton polygons.
	Let $c=\frac{a}{b}$, where $a$ and $b$ are coprime natural numbers. We define
	\[cP = \{ (cx,cy) \in \mathbb{R}^2 ~|~ (x,y) \in P\}.\]
	Note that $cP$ is only a polygonal segment in general.
	It is a Newton polygon if and only if
	the multiplicity of every slope of $P$ is a multiple of $b$.
	
	We may compare two polygonal segments as follows.
	For $i=1,2$, let $P_i$ be a polygonal segments of length $\alpha_i$
	determined by the function $f_i:[0,\alpha_i] \to \mathbb{R}$. 
	Then we write 
	\[ P_1 \succeq P_2 \]
	if and only if $f_1(x) \geq f_2(x)$ for all $0 \leq x \leq 
	\min(\alpha_1,\alpha_2)$. 
	When $P_1 \succeq P_2$, we say that $P_1$ lies above $P_2$ (indeed,
	on the $xy$-plane $P_1$ does lie above $P_2$ where both are defined).
	If $P_2$ is a Newton polygon with slope-set $N$, we will occasionally write 
	$P_1\succeq N$ instead of $P_1 \succeq P_2$.

	Finally, we introduce Newton polygons associated to power series. 
	Let $Q(s)=\sum\limits_{n=0}^\infty 
	a_n 
	s^n \in \mathcal{O}_L[[s]]^\times$ with $\lim v_p(a_n)=\infty$. Let $\alpha=\deg(Q)$
	if $Q$ is a polynomial and $\infty$ otherwise. Then
	for $*$ equal to $p$ or $q$, we define $NP_*(Q(s))$ to be the length $\alpha$ 
	Newton polygon that is the lower convex hull of the points $(n,v_*(a_n))$.
	We have the following relations:
	\begin{align} \label{Newton polygon relations}
	\begin{split}
	NP_*(Q(s^a)) &= \{ (ax,y) ~|~(x,y) \in NP_*(Q(s))\} \\
	NP_p(Q(s)) &= \{(x,ay)~|~(x,y) \in NP_q(Q(s))\}.
	\end{split}
	\end{align}

	\subsection{Characteristic series and Fredholm determinants}
	\subsubsection{Nuclear operators}
	Let $V$ be a vector space over $L$ and let $u:V \to V$ (resp. $v: V \to V$) be an
	$L$-linear (resp. $E$-linear) operator. The definition of
	a nuclear operator is slightly cumbersome and since we won't use
	the definition, we content ourselves with an informal definition (see \cite[\S 1]{Monsky-forma_cohomology3}):
	\begin{definition}
		An operator is \emph{nuclear} if for any $r\in \R$, there
		are only finitely many generalized eigenvalues (counting multiplicity) with
		$p$-adic valuation less than $r$.
	\end{definition}
 	Assume that $u$ (resp. $v$) are nuclear operators. Monsky 
 	associates an entire power series $P(u,s)$  (resp. $P(v,s)$) contained in $1 + K[[s]]$
 	to $u$ (resp. $v$). The zeros of this power series are the reciprocal generalized eigenvalues
 	of $u$ (resp. $v$) counted with multiplicity. In particular, $P(u,s)$ and $P(v,s)$ act
 	as a ``characteristic series'' of $u$ and $v$. Note that
 	if $V$ is finite, then $u$ (resp. $v$) is nuclear and $P(u,s)=\det(1-su)$ (resp. $P(v,s)=\det(1-sv)$).

	\subsubsection{Fredholm determinants}
	Let $V$ be a vector space over $L$ with norm $|\cdot|$ and
	let $B(I)=\{e_{i}\}_{i \in I}$
	be an integral basis of $V$. We assume that $I$ is countable.
	Let $u:V \to V$ (resp. $v: V \to V$) be an
	$L$-linear (resp. $E$-linear) operator. Let $(n_{i,j})$
	be the matrix of $u$ with respect to 
	$B(I)$. The \emph{Fredholm determinant} of $u$ with respect to $B(I)$ is defined to be
	\begin{align} \label{Fredholm definition}
	\begin{split}
	\det(1-su| B(I)) &= \sum_{n=0}^\infty c_ns^n \\
	c_n &= (-1)^n \sum_{\stackrel{S \subset I}{|S|=n}} 
	\sum_{\sigma \in \Sym(S)} \text{sgn}(\sigma) \prod_{i \in S} n_{i,\sigma(i)}.
	\end{split}
	\end{align}
	We define the Fredholm determinant $\det(1-sv|B_E(I))$ in
	an analogous manner using the matrix of $v$ with respect to $B_E(I)$.
	
	\begin{remark}
		In general, the Fredholm determinant is not defined, and if it is
		defined it may depend on the basis. However, when we restrict to certain types of operators,
		the Fredholm determinant is defined and independent of basis, and thus is 
		an intrinsic invariant of the operator.
	\end{remark}

	For the remainder of this subsection, we restrict our attention to 
	the $L$-linear operator $u$. We remark that all analogous results hold true for 
	$v$. 

	\begin{definition}
		For $i \in I$, we define $\textbf{row}_i(u,B(I))=\inf\limits_{j \in I} v_p(n_{i,j})$
		and $\textbf{col}_i(u,B(I))=\inf\limits_{j \in I} v_p(n_{j,i})$. That is,
		$\mathbf{row}_i(u,B(I))$ (resp. $\mathbf{col}_i(u,B(I))$) is the smallest $p$-adic
		valuation that occurs in the $i$-th row (resp. column) of the matrix of $u$.
		Note that $\textbf{col}_i(u,B(I))=\log_p|u(e_i)|$.
	\end{definition}

	\begin{definition} 
		Assume that $V=\mathbf{c}(I)$. We say that $u$ is \emph{completely continuous} if it
		is the $p$-adic limit of $L$-linear operators 
		whose image is finite dimensional. Equivalently, $u$
		is completely continuous if $\lim\limits_{i \in I} \mathbf{row}_i(u,B(I)) = \infty$
		(see \cite[Theorem 6.2]{Monsky-padic_notes}).
	\end{definition}
	
	\begin{definition}
		Assume that $V=\mathbf{s}(I)$. We say that $u$ is \emph{tight}
		if $\lim\limits_{i \in I} \mathbf{col}_i(u,B(I)) = \infty$.
	\end{definition}
	
	\begin{proposition} \label{proposition: completely continuous  operators have good Fredholm determinant}
		Let $V$ be a Banach space over $L$ with a countable orthonormal basis $B(I)$ and let $u$ be a completely continuous operator
		on $V$. The Fredholm determinant $\det(1-su| B(I))$ does not depend on
		the choice of orthonormal basis and $\det(1-su|B(I))=P(u,s)$. 
	\end{proposition}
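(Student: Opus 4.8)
The plan is to reduce the statement to the single identity $\det(1-su\,|\,B(I)) = P(u,s)$ for one fixed countable orthonormal basis $B(I)$; once that is known, independence of the choice of orthonormal basis is automatic, because $P(u,s)$ was defined intrinsically, with no reference to a basis. (Here $u$, being completely continuous, is in particular nuclear, so $P(u,s)$ is defined.) I would prove the identity in three steps: first check that the Fredholm determinant is a well-defined entire series that depends continuously on $u$; then verify the identity by hand for finite-rank operators; and finally pass to the limit, using that every completely continuous operator is a norm-limit of finite-rank truncations. The only genuinely delicate point, I expect, will be this continuity, which is exactly the content of the Serre--Monsky estimates; the remaining two steps are bookkeeping.

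\textbf{Step 1 (well-definedness and continuity).} Since $I$ is countable and $u$ is completely continuous, I would first enumerate $I$ as $i_1,i_2,\dots$ so that the row-valuations satisfy $\textbf{row}_{i_1}(u,B(I))\le\textbf{row}_{i_2}(u,B(I))\le\cdots$, a nondecreasing sequence tending to $\infty$. For each $n$-element subset $S\subseteq I$, the corresponding term $\sum_{\sigma\in\Sym(S)}\mathrm{sgn}(\sigma)\prod_{i\in S}n_{i,\sigma(i)}=\det\big((n_{i,j})_{i,j\in S}\big)$ of \eqref{Fredholm definition} has $p$-adic valuation at least $\sum_{i\in S}\textbf{row}_i(u,B(I))\ge\textbf{row}_{i_1}(u,B(I))+\cdots+\textbf{row}_{i_n}(u,B(I))$, so below any fixed valuation only finitely many $S$ contribute; hence each $c_n$ lies in $L$, $v_p(c_n)\to\infty$, and $\det(1-su\,|\,B(I))\in 1+sL[[s]]$ is entire. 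Applying the same multilinear estimate to a difference of two operators would give the continuity I need: if $u_m\to u$ in operator norm with $\textbf{row}_i(u_m,B(I))\ge\textbf{row}_i(u,B(I))$ for all $i$ and $m$, then the coefficients of $\det(1-su_m\,|\,B(I))$ converge to those of $\det(1-su\,|\,B(I))$. This is Serre's bound \cite{Serre-p-adic_banach}.

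\textbf{Step 2 (finite-rank case).} Suppose $u(V)$ is finite-dimensional. Then $u(V)$ is $u$-stable, and the nonzero generalized eigenvalues of $u$ coincide with those of the restriction $u|_{u(V)}$, so by the stated properties of $P$ we get $P(u,s)=\det(1-s\,u|_{u(V)})$. On the other hand, from the rewriting $(-1)^n c_n=\sum_{|S|=n}\det\big((n_{i,j})_{i,j\in S}\big)=\mathrm{Tr}(\Lambda^n u)$ and the fact that $\Lambda^n u$ factors through the finite-dimensional space $\Lambda^n\big(u(V)\big)$, evaluating the trace in a basis adapted to $u(V)$ gives $\mathrm{Tr}(\Lambda^n u)=\mathrm{Tr}\big(\Lambda^n(u|_{u(V)})\big)$. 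Hence $\det(1-su\,|\,B(I))=\det(1-s\,u|_{u(V)})=P(u,s)$ whenever $u$ has finite rank, and this common value manifestly does not depend on $B(I)$.

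\textbf{Step 3 (approximation).} For general completely continuous $u$, I would let $\pi_m$ be the projection of $V=\mathbf{c}(I)$ onto $\mathrm{span}(e_{i_1},\dots,e_{i_m})$ and put $u_m=\pi_m\circ u$. Each $u_m$ has finite rank, satisfies $\textbf{row}_i(u_m,B(I))\ge\textbf{row}_i(u,B(I))$ for all $i$ and $m$, and $\|u-u_m\|\le\sup_{k>m}p^{-\textbf{row}_{i_k}(u,B(I))}\to 0$, so $u_m\to u$. By Step 1, $\det(1-su_m\,|\,B(I))\to\det(1-su\,|\,B(I))$ coefficientwise; by the corresponding continuity of Monsky's characteristic series on the space of completely continuous operators, $P(u_m,s)\to P(u,s)$ coefficientwise \cite{Monsky-forma_cohomology3}; and by Step 2, $\det(1-su_m\,|\,B(I))=P(u_m,s)$ for every $m$. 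Passing to the limit yields $\det(1-su\,|\,B(I))=P(u,s)$, and then basis-independence as explained at the start. As flagged, the hard part is Step 1 together with the continuity of $P(\cdot,s)$ used here: both are the quantitative inputs supplied by the Serre--Monsky theory of completely continuous (and nuclear) operators, so in the write-up these would be careful citations rather than fresh arguments.
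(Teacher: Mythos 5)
Your proposal is correct. The paper's own proof is simply a citation to Serre and Monsky (Theorem 1.3 of Monsky's formal cohomology paper, or Chapter 6 of his $p$-adic notes); your three-step argument---entireness of the Fredholm series via row-valuation estimates, direct verification for finite-rank operators, and passage to the limit by finite-rank truncations---reconstructs exactly the standard Serre argument contained in those references.
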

	\begin{proof}
		This is due to Serre. See \cite[Theorem 1.3]{Monsky-forma_cohomology3}
		or \cite[Chapter 6]{Monsky-padic_notes}.
	\end{proof}
	
	If $V$ is not a Banach space, we may still compute $P(u,s)$
	using Fredholm determinants if $V$ is the union of Banach spaces
	on which $u$ restricts to a completely continuous  operator.
	\begin{lemma} \label{lemma: expanding radius means completely continuous  operator}
		Let $x, y \in \mathbf{c}(I)$ with $x>y$. If $u(\mathbf{c}(I,x)) \subset \mathbf{s}(I,y)$,
		then $u$ restricts to a completely continuous  operator on $\mathbf{c}(I,x)$. 
	\end{lemma}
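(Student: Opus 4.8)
The plan is to present $u|_{\mathbf{c}(I,x)}$ as the composite of a bounded operator with the natural inclusion $\iota\colon\mathbf{s}(I,y)\hookrightarrow\mathbf{c}(I,x)$, and to observe that the hypothesis $x>y$ forces $\iota$ itself to be completely continuous. First I would unwind $x>y$. Put $\delta_i=v_p(y_i)-v_p(x_i)$, so $\lim_{i\in I}\delta_i=\infty$; equivalently $\{i\in I:\delta_i<r\}$ is finite for every $r$, and in particular only finitely many $\delta_i$ are negative. Since also $x,y\in\mathbf{c}(I)$, an element $\sum_i w_iy_ie_i$ of $\mathbf{s}(I,y)$ (with $\inf_i v_p(w_i)>-\infty$) rewrites as $\sum_i(w_iy_i/x_i)\,x_ie_i$ with $v_p(w_iy_i/x_i)=v_p(w_i)+\delta_i\to\infty$, so $\mathbf{s}(I,y)\subset\mathbf{c}(I,x)$ and $u$ genuinely restricts to a self-map of $\mathbf{c}(I,x)$.

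I would then show $\iota$ is completely continuous. For a real number $r$ let $P_r\colon\mathbf{s}(I,y)\to\mathbf{c}(I,x)$ be $\iota$ followed by projection onto the span of $\{x_ie_i:\delta_i\leq r\}$; this is finite rank because $\{i:\delta_i\leq r\}$ is finite. For $w=\sum_i w_iy_ie_i$ with $|w|_{\mathbf{s}(I,y)}\leq 1$ one gets $|(\iota-P_r)(w)|_{\mathbf{c}(I,x)}=\sup_{\delta_i>r}|w_i|_p\,p^{-\delta_i}\leq p^{-r}$, so $\|\iota-P_r\|\to 0$ as $r\to\infty$, i.e.\ $\iota$ is a uniform limit of finite-rank operators. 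It remains to know that $u\colon\mathbf{c}(I,x)\to\mathbf{s}(I,y)$ is bounded: both spaces are Banach and include continuously into the ambient $\mathbf{c}(I)$ (using $x,y\in\mathbf{c}(I)$), and $u$ is continuous on $\mathbf{c}(I)$, so if $z_n\to z$ in $\mathbf{c}(I,x)$ and $uz_n\to w$ in $\mathbf{s}(I,y)$ then $uz_n$ converges both to $uz$ and to $w$ in $\mathbf{c}(I)$, whence $w=uz$; the closed graph theorem gives boundedness. Writing $u|_{\mathbf{c}(I,x)}=\iota\circ u$ then exhibits it as the uniform limit of the finite-rank operators $P_r\circ u$, hence completely continuous.

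Alternatively — and this may be the form I actually record, since it is what later sections use — I would check the matrix criterion directly. Boundedness of $u\colon\mathbf{c}(I,x)\to\mathbf{s}(I,y)$ yields a constant $B$ with $|u(x_je_j)|_{\mathbf{s}(I,y)}\leq p^{B}$ for all $j$; writing $u(x_je_j)=\sum_i c_{ij}\,y_ie_i$ this says $v_p(c_{ij})\geq -B$ uniformly in $i$ and $j$. Passing to the orthonormal basis $\{x_ie_i\}$ of $\mathbf{c}(I,x)$ via $y_ie_i=(y_i/x_i)\,x_ie_i$ with $v_p(y_i/x_i)=\delta_i$, the matrix entry $m_{ij}$ of $u|_{\mathbf{c}(I,x)}$ satisfies $v_p(m_{ij})\geq\delta_i-B$, so $\mathbf{row}_i(u|_{\mathbf{c}(I,x)},\{x_ke_k\})\geq\delta_i-B\to\infty$; by the characterization of complete continuity recalled just before the lemma, this finishes it.

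The only step that is not pure bookkeeping with the weight vectors $x$ and $y$ is the boundedness of $u$ as a map $\mathbf{c}(I,x)\to\mathbf{s}(I,y)$, i.e.\ the existence of the uniform constant $B$; so the hard part will be making sure this input is legitimately available (either the closed graph argument above, or simply the standing continuity hypothesis on operators). In particular one must resist trying to extract $B$ straight from the definition of $\mathbf{s}(I,y)$, which only bounds a single column of the matrix at a time. The analogous statement for $E$-linear operators on $\mathbf{c}(I_E)$ is proved verbatim in the same way.
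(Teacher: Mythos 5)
Your second argument (the direct matrix/row criterion) is exactly what the paper's one-line proof is gesturing at, and it is correct: pass to the orthonormal basis $\{x_ie_i\}$ of $\mathbf{c}(I,x)$, observe that the matrix entry picks up a factor $y_i/x_i$ with valuation $\delta_i$, and invoke the characterization of completely continuous operators via $\mathbf{row}_i\to\infty$. Your first argument (factoring $u|_{\mathbf{c}(I,x)}=\iota\circ u$ and showing $\iota\colon\mathbf{s}(I,y)\hookrightarrow\mathbf{c}(I,x)$ is a uniform limit of finite-rank projections) is a correct and somewhat more conceptual alternative, though it buys nothing over the matrix computation here.

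The one place your write-up goes wrong is the closed-graph argument for the boundedness of $u\colon\mathbf{c}(I,x)\to\mathbf{s}(I,y)$. You close the graph by asserting that ``$u$ is continuous on $\mathbf{c}(I)$,'' but nothing in the setup grants this: the operator $u$ is given on a space $V$ with integral basis $B(I)$, so $V\subset\mathbf{s}(I)$, and $V$ need not contain $\mathbf{c}(I)$, nor is $u$ declared continuous for any topology. So that step is circular, and the closed graph theorem cannot be applied as stated. You are right that one cannot extract a uniform column bound $B$ straight from ``$u(\mathbf{c}(I,x))\subset\mathbf{s}(I,y)$'' alone (that only bounds each column separately), and you are right that this is the only non-bookkeeping ingredient; but the missing input is genuinely an extra hypothesis. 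The paper is silently treating $u$ as a bounded operator on $\mathbf{c}(I,x)$ (equivalently, the matrix $(c_{ij})$ is uniformly bounded), which is how the lemma is in fact applied — the Dwork operators in Proposition \ref{corollary: completely continuous  on small radius} restrict to manifestly bounded maps $V_m\to\mathbf{s}(I,x_{m/p})$ by Lemma \ref{lemma: Dwork growth}. Once boundedness is granted, your matrix calculation $v_p(m_{ij})\ge\delta_i-B$ finishes the proof and coincides with the paper's intent; without it, neither your factorization nor your matrix estimate closes, and the closed-graph patch does not repair the gap.
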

	\begin{proof}
		This follows from the definition of our partial ordering and the
		definition of completely continuous.
	\end{proof}

	\begin{lemma} \label{lemma: as long as operator is defined radius doesn't matter}
		Let $x=(x_i)_{i \in I},y=(y_i)_{i \in I}  \in \mathbf{c}(I)$ with $x>y$. Assume that $u$ restricts to a completely continuous 
		operator on $\mathbf{c}(I,x)$ and that $u(\mathbf{s}(I,y)) \subset \mathbf{s}(I,y)$.
		Then $\det(1-su|B(I,y))$ exists and is equal to $\det(1-su|B(I,x))$.
	\end{lemma}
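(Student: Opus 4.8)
The plan is to prove the two Fredholm determinants are equal by showing they are \emph{the same formal sum}, coefficient by coefficient, in the sense of \eqref{Fredholm definition}; the existence of the $B(I,y)$-version is then inherited, for free, from that of the $B(I,x)$-version, which we get from Proposition~\ref{proposition: completely continuous  operators have good Fredholm determinant}.

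First I would compare the two matrices of $u$. We may assume $x_i\neq 0$ and $y_i\neq 0$ for all $i\in I$, so that each $y_i/x_i\in L$. Write $u(x_je_j)=\sum_{i\in I}n^x_{ij}\,x_ie_i$ and $u(y_je_j)=\sum_{i\in I}n^y_{ij}\,y_ie_i$ for the matrices of $u$ with respect to the integral bases $B(I,x)$ and $B(I,y)$; the first is well defined since $u$ acts on $\mathbf{c}(I,x)$, and the second since, by hypothesis, $u(\mathbf{s}(I,y))\subset\mathbf{s}(I,y)$. Because $y_je_j=\tfrac{y_j}{x_j}(x_je_j)$ and $u$ is $L$-linear, expanding $u(y_je_j)$ in the basis $B(I,y)$ yields the relation $n^y_{ij}=\dfrac{x_i\,y_j}{x_j\,y_i}\,n^x_{ij}$.

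The crux is a cancellation around cycles. Fix $n\geq 0$, a finite subset $S\subset I$ with $|S|=n$, and a permutation $\sigma\in\Sym(S)$. Then
\[
\prod_{i\in S}n^y_{i,\sigma(i)}=\Bigg(\prod_{i\in S}\frac{x_i}{x_{\sigma(i)}}\Bigg)\Bigg(\prod_{i\in S}\frac{y_{\sigma(i)}}{y_i}\Bigg)\prod_{i\in S}n^x_{i,\sigma(i)}=\prod_{i\in S}n^x_{i,\sigma(i)},
\]
since $\sigma$ permutes $S$, so both parenthesized products telescope to $1$. As $\mathrm{sgn}(\sigma)$ is basis-independent, this shows that for every $n$ the coefficient of $s^n$ in $\det(1-su\mid B(I,y))$ is, summand by summand over the same index set of pairs $(S,\sigma)$, identical to the coefficient of $s^n$ in $\det(1-su\mid B(I,x))$. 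Now I would invoke Proposition~\ref{proposition: completely continuous  operators have good Fredholm determinant}: since $u$ is completely continuous on the Banach space $\mathbf{c}(I,x)$, the series $\det(1-su\mid B(I,x))$ exists, equals $P(u,s)$, and in particular each of its coefficients is the sum of a convergent family. As the families defining the coefficients of $\det(1-su\mid B(I,y))$ are term-for-term the same, they converge to the same limits, so $\det(1-su\mid B(I,y))$ exists and equals $\det(1-su\mid B(I,x))$.

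There is no deep obstacle here; the one subtlety to be careful about — and precisely what the cycle-cancellation is designed to circumvent — is that $u$ need not restrict to a completely continuous operator on $\mathbf{c}(I,y)\subset\mathbf{c}(I,x)$, so one cannot apply Proposition~\ref{proposition: completely continuous  operators have good Fredholm determinant} directly in the $y$-basis; the combinatorial identity is exactly the device that transports both convergence and value from the $x$-basis, where complete continuity is available.
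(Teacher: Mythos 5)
Your proof is correct and follows essentially the same route as the paper: compare the matrices of $u$ in the two integral bases, observe that the conjugating factors $x_i y_j/(x_j y_i)$ telescope to $1$ around every permutation cycle, and conclude that the two Fredholm series agree coefficient by coefficient (with convergence inherited from the $B(I,x)$ side). The only cosmetic difference is that you spell out the change-of-basis relation and the appeal to Proposition~\ref{proposition: completely continuous  operators have good Fredholm determinant} more explicitly than the paper does.
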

	\begin{proof}
		Let $(n_{i,j})$ be the matrix of $u$ with respect to $B(I,x)$. The matrix
		of $u$ with respect to $B(I,y)$ is $(n_{i,j}\frac{x_jy_i}{x_iy_j})$. 
		For a finite subset $S \subset I$ and $\sigma \in \Sym(S)$ we have
		\begin{align*}
			\prod_{i \in S} n_{i,\sigma(i)} &= \prod_{i \in S} n_{i,\sigma(i)}\frac{x_{\sigma(i)}y_i}{x_iy_{\sigma(i)}},
		\end{align*} 
		and the result follows from \eqref{Fredholm definition}.
	\end{proof}
	\begin{corollary} \label{corollary: union of completely continuous  }
		Let $S$ be a set and let $\{x_m\}_{m \in I}$ be a family of elements in $\mathbf{c}(I)$.
		Assume that $V=\cup_{m \in S} \mathbf{c}(I,x_m)$ and that $u$ restricts
		to a completely continuous  operator on $\mathbf{c}(I,x_m)$ for each $m \in S$.
		Then $u$ is a nuclear operator on $V$. Furthermore, let $x \in \mathbf{c}(I)$
		and assume $u(\mathbf{s}(I,x)) \subset \mathbf{s}(I,x)$. Then
		$\det(1-su|B(I,x))$ exists and is equal to $P(u,s)$.
	\end{corollary}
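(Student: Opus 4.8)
The plan is to reduce to the completely continuous case handled by Proposition \ref{proposition: completely continuous  operators have good Fredholm determinant}, using Lemmas \ref{lemma: expanding radius means completely continuous  operator} and \ref{lemma: as long as operator is defined radius doesn't matter} to move the computation of the Fredholm determinant between the various integral bases $B(I,x_m)$ and the distinguished basis $B(I,x)$. Each $\mathbf{c}(I,x_m)$ is a Banach space over $L$ with countable orthonormal basis $B(I,x_m)$, and by hypothesis $u$ restricts to a completely continuous operator on it; hence Proposition \ref{proposition: completely continuous  operators have good Fredholm determinant} gives that $P_m(s):=\det(1-su\mid B(I,x_m))$ is independent of the orthonormal basis, equals the Monsky characteristic series $P(u|_{\mathbf{c}(I,x_m)},s)\in 1+L[[s]]$, and is entire, its reciprocal zeros (with multiplicity) being precisely the nonzero generalized eigenvalues of $u|_{\mathbf{c}(I,x_m)}$.

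Next I would glue these together. After replacing $\{x_m\}_{m\in S}$ by a cofinal chain under the partial ordering of Definition \ref{definition: partial ordering} — harmless since $V$ is the union of the $\mathbf{c}(I,x_m)$ and $I$ is countable — I would, for each consecutive pair $x_m<x_{m'}$ in the chain, insert an intermediate radius $y$ with $x_m<y<x_{m'}$ on which $u$ is completely continuous by Lemma \ref{lemma: expanding radius means completely continuous  operator} (complete continuity of $u$ on $\mathbf{c}(I,x_{m'})$ forces $u(\mathbf{c}(I,x_{m'}))\subseteq\mathbf{s}(I,y)$ once $y$ lies far enough below $x_{m'}$), and then apply Lemma \ref{lemma: as long as operator is defined radius doesn't matter} along the two steps $x_m<y$ and $y<x_{m'}$ to conclude $P_m=P_{m'}$; write $P(u,s)$ for the common value. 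Comparing the given $x$ — for which $u(\mathbf{s}(I,x))\subseteq\mathbf{s}(I,x)$ is assumed — against a member of the chain lying above it, the same argument yields $\det(1-su\mid B(I,x))=P(u,s)$.

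Finally, every generalized eigenvector of $u$ on $V$ lies in some $\mathbf{c}(I,x_m)$, which $u$ preserves, and each generalized eigenspace of $u$ on $V$ is finite dimensional because along the chain it already stabilizes — each $\mathbf{c}(I,x_m)$ contributes the fixed order of vanishing of the entire series $P(u,s)$ at the corresponding reciprocal eigenvalue. Hence the nonzero generalized eigenvalues of $u$ on $V$, counted with multiplicity, are exactly the reciprocal zeros of $P(u,s)$; in particular only finitely many have $p$-adic valuation below any prescribed bound, so $u$ is nuclear with characteristic series $P(u,s)$, and $\det(1-su\mid B(I,x))=P(u,s)$ as claimed.

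The step I expect to be the main obstacle is the second one: organizing the family $\{x_m\}$ (together with $x$) into a single chain and, at each comparison, exhibiting the intermediate radius together with the $u$-invariance of the relevant $\mathbf{s}(I,\cdot)$ that is needed before Lemmas \ref{lemma: expanding radius means completely continuous  operator} and \ref{lemma: as long as operator is defined radius doesn't matter} can be applied, so that the Fredholm determinants computed in the different bases genuinely coincide. Everything else is formal once this radius bookkeeping is in place.
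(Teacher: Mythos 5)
Your plan is a genuine attempt to re-prove, from the surrounding lemmas, what the paper in fact gets by citing Monsky's Theorem~1.6 from \cite{Monsky-forma_cohomology3} (the paper's proof is just: ``This follows from [Monsky, Theorem~1.6] and Lemma~\ref{lemma: as long as operator is defined radius doesn't matter}.''). Two of your gluing steps have real gaps.

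First, the reduction to a cofinal chain is not justified. The hypothesis is that $V=\bigcup_{m\in S}\mathbf{c}(I,x_m)$ for an arbitrary family; nothing says the $x_m$ are totally ordered or even directed under the ordering of Definition~\ref{definition: partial ordering}. You would have to show that a chain cofinal in $\{x_m\}_{m\in S}$ exists and consists of $x_m$'s on which $u$ is still completely continuous, and ``harmless since $I$ is countable'' does not deliver that: the full poset $\mathbf{c}(I)$ is directed, but the given subfamily need not be, and in any case you may only move within the family without losing the complete-continuity hypothesis.

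Second, and more seriously, the step you describe as applying Lemma~\ref{lemma: as long as operator is defined radius doesn't matter} ``along $x_m<y$'' — i.e.\ to conclude $\det(1-su\mid B(I,x_m))=\det(1-su\mid B(I,y))$ — requires the hypothesis of that lemma that $u(\mathbf{s}(I,x_m))\subset\mathbf{s}(I,x_m)$. This is not among the hypotheses of the corollary, and it does not follow from complete continuity of $u$ on $\mathbf{c}(I,x_m)$: that controls the image of $\mathbf{c}(I,x_m)$, but $\mathbf{s}(I,x_m)$ properly contains $\mathbf{c}(I,x_m)$ (bounded versus null families), so complete continuity says nothing directly about $u$ on the remaining vectors of $\mathbf{s}(I,x_m)$. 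Your construction of the intermediate $y$ controls $u(\mathbf{c}(I,x_{m'}))$, which gives the invariance you need for the step ``along $y<x_{m'}$'' (since $\mathbf{s}(I,y)\subset\mathbf{c}(I,x_{m'})$ once $y<x_{m'}$), but it does not repair the other step. A fix is possible — e.g.\ use complete continuity on $\mathbf{c}(I,x_m)$ to produce $z_m<x_m$ with $u(\mathbf{c}(I,x_m))\subset\mathbf{s}(I,z_m)$, deduce $u(\mathbf{s}(I,z_m))\subset\mathbf{s}(I,z_m)$, and compare $x_m$ and $x_{m'}$ both to the \emph{same} small parameter $z_m$ rather than to an intermediate between them — but this is the kind of bookkeeping that Monsky's Theorem~1.6 already packages, and as written your argument does not close. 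The final nuclearity paragraph is essentially correct in spirit, but it leans on the equality $P_m=P_{m'}$, so it inherits the same gap.
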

	\begin{proof}
		This follows from \cite[Theorem 1.6]{Monsky-forma_cohomology3}
		and Lemma \ref{lemma: as long as operator is defined radius doesn't matter}.
	\end{proof}
	
		\subsubsection{Newton polygons of operators}
	Let $V$ be vector space over $L$ with norm $|\cdot|$ and let $B(I)=\{e_i\}_{i \in I}$ be 
	an orthonormal basis indexed by $I$. Let $u:V \to V$ (resp. $v:V \to V$) be
	an $L$-linear (resp. $E$-linear) operator that is nuclear.
	We define the Newton polygon of $u$ (resp. $v$) to be $NP(u)=NP(P(u,s))$ (resp.  $NP(v)=NP(P(v,s))$).
	We obtain estimates for $NP(v)$ by estimating the columns of the matrix representing $v$.
	\begin{lemma} \label{lemma: estimating NP by estimating columns}
		Let $S$ be a set and let $\{x_m\}_{m \in S}$ be a family of elements in $\mathbf{c}(I)$.
		Assume that $V=\cup_{m \in S} \mathbf{c}(I_E,x_m)$ and that $v$ restricts
		to a completely continuous operator on each $\mathbf{c}(I_E,x_m)$.
		We also assume there exists $x \in \mathbf{c}(I)$ such that $v$ restricts to
		a tight operator on $\mathbf{s}(I_E,x)$. 
		If $v$ is $\nu^{-1}$-semilinear, then
		\begin{align*}
		NP(v) \succeq  \{\mathbf{col}_{(i,1)}(v,B(I_E,x))\}^{\times a}_{i \in I}
		\end{align*}
	\end{lemma}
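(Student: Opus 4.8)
The plan is to reduce the statement to a column-by-column valuation estimate on the Fredholm determinant of $v$ and then to exploit the $\nu^{-1}$-semilinearity: for the basis $B(I_E,x)$, the column valuations depend only on the $I$-coordinate of $I_E=I\times\{1,\dots,a\}$, and it is precisely this collapsing that produces the multiplicity $a$ in the slope-set.

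First I would pin down the characteristic series of $v$ as a Fredholm determinant. The hypotheses --- $V=\bigcup_{m\in S}\mathbf{c}(I_E,x_m)$ with $v$ completely continuous on each $\mathbf{c}(I_E,x_m)$, together with $v$ restricting to a tight (hence in particular well-defined) operator on $\mathbf{s}(I_E,x)$ --- are exactly those of the $E$-linear analogue of Corollary \ref{corollary: union of completely continuous } with index set $I_E$. Thus $v$ is nuclear and $P(v,s)=\det(1-sv\mid B(I_E,x))=\sum_{n\geq 0}c_ns^n$, with $c_n$ the alternating sum over $n$-element subsets $T\subset I_E$ and permutations $\sigma\in\Sym(T)$ of \eqref{Fredholm definition}. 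Writing $(n_{\mathbf{t},\mathbf{t}'})$ for the matrix of $v$ in the basis $B(I_E,x)$ (with $\mathbf{t}$ a generic element of $I_E$), the definition of $\mathbf{col}$ gives $v_p(n_{\mathbf{t},\mathbf{t}'})\geq \mathbf{col}_{\mathbf{t}'}(v,B(I_E,x))$, so for every $T$ and $\sigma$,
\[ v_p\Big(\prod_{\mathbf{t}\in T}n_{\mathbf{t},\sigma(\mathbf{t})}\Big)\geq \sum_{\mathbf{t}\in T}\mathbf{col}_{\sigma(\mathbf{t})}(v,B(I_E,x))=\sum_{\mathbf{t}\in T}\mathbf{col}_{\mathbf{t}}(v,B(I_E,x)), \]
the last equality because $\sigma$ permutes $T$. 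Hence $v_p(c_n)$ is at least the sum of the $n$ smallest elements of the multiset $\{\mathbf{col}_{\mathbf{t}}(v,B(I_E,x))\}_{\mathbf{t}\in I_E}$.

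Next I would use the semilinearity to simplify this multiset. The basis $B(I_E,x)$ consists of the vectors $\zeta_jx_ie_i$, and since $v$ is $\nu^{-1}$-semilinear over $L$ with $\nu$ an isometry and each $\zeta_j\in\mathcal{O}_L^\times$, we get $|v(\zeta_jx_ie_i)|=|\nu^{-1}(\zeta_jx_i)|\,|v(e_i)|=|x_i|\,|v(e_i)|$, which is independent of $j$. As the $\mathbf{t}$-th column valuation is $\log_p$ of the norm of the image of the $\mathbf{t}$-th basis vector, this yields $\mathbf{col}_{(i,j)}(v,B(I_E,x))=\mathbf{col}_{(i,1)}(v,B(I_E,x))$ for all $j$, so $\{\mathbf{col}_{\mathbf{t}}(v,B(I_E,x))\}_{\mathbf{t}\in I_E}=\{\mathbf{col}_{(i,1)}(v,B(I_E,x))\}^{\times a}_{i\in I}$ as multisets; tightness forces $\mathbf{col}_{(i,1)}\to\infty$, so this multiset is the slope-set of a Newton polygon $P$. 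Combining with the previous paragraph, $v_p(c_n)\geq P(n)$ for all $n\geq 0$, and since $P$ is convex with $P(0)=0$ every affine minorant of $P$ is a minorant of the points $(n,v_p(c_n))$; hence $P$ lies below the lower convex hull $NP(v)=NP(P(v,s))$, i.e. $NP(v)\succeq P$, which is the assertion.

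The argument is mostly bookkeeping, and the one step that warrants real care is the semilinearity computation of the column norms: one has to recall that a column valuation records the norm of the image of a basis vector, and then use that $\nu^{-1}$ is norm-preserving and the $\zeta_j$ are units --- this is exactly what collapses the $a$ columns over a fixed $i\in I$ to a single value and produces the multiplicity $a$. The remaining ingredients --- the standard column bound on Fredholm coefficients and the elementary fact that a lower convex hull dominating a convex function at every integer point dominates it everywhere --- are routine.
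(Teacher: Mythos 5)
Your proof is correct and follows essentially the same route as the paper: invoke the union-of-completely-continuous corollary to identify $P(v,s)$ with the Fredholm determinant $\det(1-sv\mid B(I_E,x))$, bound $v_p(c_n)$ below by the sum of the $n$ smallest column valuations, and then use $\nu^{-1}$-semilinearity together with the fact that the $\zeta_j$ are units to see that $\mathbf{col}_{(i,j)}$ is independent of $j$, giving the multiplicity $a$. The only difference is cosmetic: you spell out that $\sigma$ permutes $T$ when regrouping the column sum and phrase the final comparison via affine minorants, whereas the paper asserts both directly.
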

	\begin{proof}
		Since $v$ is tight, we know that $\{\mathbf{col}_{(i,j)}(v,B(I_E,x))\}_{(i,j) \in I_E}$
		is the slope-set of a Newton polygon. We claim
		\begin{align}\label{eq: estimating via columns}
		NP(v) \succeq \{\mathbf{col}_{(i,j)}(v,B(I_E,x))\}_{(i,j) \in I_E}.
		\end{align}
		To see this, consider the sum that defines $c_n$ in \eqref{Fredholm definition}.
		Each product $\prod  n_{i,\sigma(i)}$ is taken over elements from $n$ distinct columns.
		Thus, the $p$-adic valuation of $c_n$ is at least the sum of the 
		$n$ smallest elements of $\{\mathbf{col}_{(i,j)}(v,B(I_E,x))\}_{(i,j) \in I_E}$.
		This implies
		\begin{align*}
		NP(\det(1-sv|B(I_E,x))) \succeq \{\mathbf{col}_{(i,j)}(v,B(I_E,x))\}_{(i,j) \in I_E},
		\end{align*}
		and \eqref{eq: estimating via columns} follows from Corollary \ref{corollary: union of completely continuous  }.
		To prove the lemma, note that $\mathbf{col}_{(i,j)}(v,B(I_E,x))$ is independent of
		$j$, since $v(\zeta_je_i)=\zeta_j^{\nu^{-1}}v(e_i)$ and $\zeta_j^{\nu^{-1}}$
		is a unit. 
	\end{proof}

	\begin{remark} \label{remark:tightness on first coordinate}
		Let $v$ be $\nu^{-1}$-semilinear. 
		From the end of the proof of Lemma \ref{lemma: estimating NP by estimating columns},
		we also see that $v$ is tight if $\lim\limits_{i \in I} \mathbf{col}_{(i,1)}(v,B(I_E,x))=\infty$.
	\end{remark}

	\subsection{Dwork operators}  \label{section: Dwork operators}
	We now introduce a class of operators known as Dwork operators. These operators were studied by Monsky in \cite{Monsky-forma_cohomology3}, although in a slightly different setting. 
	\subsubsection{Local and semi-local Dwork operators}
\label{subsubsection: Local and semi-local Dwork operators}
	\begin{definition}
		Let $\nu$ be a $p$-Frobenius endomorphism of $\mathcal{O}_{\mathcal{E}^\dagger}$ and let $\sigma=\nu^{a}$.
		We say that an $L$-linear operator $u:\mathcal{E}^\dagger \to \mathcal{E}^\dagger$ (resp. $E$-linear operator $v:\mathcal{E}^\dagger \to \mathcal{E}^\dagger$) is a $q$-\emph{Dwork operator} (resp. $p$-\emph{Dwork operator}) 
		if it is $ \sigma^{-1}$-semilinear (resp. $\nu^{-1}$-semilinear). That is,
		we have 
		$u(y^\sigma x) = yu(x)$ (resp. $v(y^\nu x) = yv(x)$) for all $y \in \mathcal{E}^\dagger$ and $x \in \mathcal{E}^\dagger$.
	\end{definition}

	\begin{lemma} \label{lemma: Dwork growth}
		Let $v: \mathcal{E}^\dagger \to \mathcal{E}^\dagger$ be a $p$-Dwork operator.
		For $m$ sufficiently large we have $v(\mathcal{O}_{\mathcal{E}}^m \otimes \Q_p) \subset \mathcal{O}_{\mathcal{E}}^{\frac{m}{p}} \otimes \Q_p$. 
	\end{lemma}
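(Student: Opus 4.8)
The plan is to leverage the $\nu^{-1}$-semilinearity of $v$ together with the Frobenius decomposition of Lemma \ref{lemma: decomposition of frobenius action}. Since $v$ is $\nu^{-1}$-semilinear it is determined by the finitely many values $v(1), v(t), \dots, v(t^{p-1}) \in \mathcal{E}^\dagger$. The first task is to package these, together with the lift $t^\nu$ itself, inside rings of the form $\mathcal{O}_{\mathcal{E}}^{m_0}(b_0)$. For $t^\nu$ I would argue as follows. Because $\nu$ reduces to the $p$-th power map modulo $\gothm$, we may write $t^\nu = t^p + \pi_\circ g$ with $g \in \mathcal{O}_{\mathcal{E}^\dagger}$; hence the coefficient of $t^n$ in $t^\nu$ has $v_p \geq v_p(\pi_\circ) > 0$ for every $n \neq p$, while the coefficient of $t^p$ is a unit. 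Thus $w_k(t^\nu) = p$ for all $0 \leq k < v_p(\pi_\circ)$, and (as $t^\nu$ has integral coefficients) $w_k(t^\nu) = +\infty$ for $k < 0$; so the condition $w_k(t^\nu) \geq -km + p$ defining $\mathcal{O}_{\mathcal{E}}^m(-p)$ holds automatically for $k < v_p(\pi_\circ)$. For the remaining range $k \geq v_p(\pi_\circ)$, overconvergence of $t^\nu$ gives a linear bound $w_k(t^\nu) \geq -ck - d$ for fixed $c,d$, and then $-ck - d \geq -km + p$ holds throughout that range once $m$ is large; hence $t^\nu \in \mathcal{O}_{\mathcal{E}}^m(-p)$ for all sufficiently large $m$. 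The same overconvergence estimate, applied to each of $v(1), \dots, v(t^{p-1}) \in \mathcal{E}^\dagger$, shows these lie in a common $\mathcal{O}_{\mathcal{E}}^{m_0}(b_0) \otimes \Q_p$, where $m_0$ may be taken as large as we please (larger superscripts and larger shifts only enlarge these spaces).

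Now fix such an $m_0$ large enough to witness both facts, and let $m \geq p\, m_0$. For $x \in \mathcal{O}_{\mathcal{E}}^m$, monotonicity gives $t^\nu \in \mathcal{O}_{\mathcal{E}}^m(-p)$, so Lemma \ref{lemma: decomposition of frobenius action} yields a decomposition $x = \sum_{i=0}^{p-1} t^i y_i^\nu$ with $y_i \in \mathcal{O}_{\mathcal{E}}^{m/p}$. Applying $v$ and using $\nu^{-1}$-semilinearity, $v(x) = \sum_{i=0}^{p-1} y_i\, v(t^i)$. Since $m/p \geq m_0$, each $y_i \in \mathcal{O}_{\mathcal{E}}^{m/p}(0)$ and each $v(t^i) \in \mathcal{O}_{\mathcal{E}}^{m/p}(b_0) \otimes \Q_p$, so \eqref{module multiplication eq} gives $y_i\, v(t^i) \in \mathcal{O}_{\mathcal{E}}^{m/p}(b_0) \otimes \Q_p$, whence $v(x) \in \mathcal{O}_{\mathcal{E}}^{m/p}(b_0) \otimes \Q_p$. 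Finally, iterating part (1) of Lemma \ref{lemma: mult or divide by p growth} shows $p^j \mathcal{O}_{\mathcal{E}}^{m/p}(b_0) \subset \mathcal{O}_{\mathcal{E}}^{m/p}$ once $j \geq b_0 p / m$, so $\mathcal{O}_{\mathcal{E}}^{m/p}(b_0) \otimes \Q_p = \mathcal{O}_{\mathcal{E}}^{m/p} \otimes \Q_p$ and $v(x) \in \mathcal{O}_{\mathcal{E}}^{m/p} \otimes \Q_p$. As $v$ is $\Q_p$-linear, this proves $v(\mathcal{O}_{\mathcal{E}}^m \otimes \Q_p) \subset \mathcal{O}_{\mathcal{E}}^{m/p} \otimes \Q_p$.

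The routine parts here are the monotonicity of the spaces $\mathcal{O}_{\mathcal{E}}^m(b)$ in both $m$ and $b$, the multiplicativity \eqref{module multiplication eq}, and clearing bounded denominators via Lemma \ref{lemma: mult or divide by p growth}. The one point requiring care — and the only place the hypothesis that $\nu$ is a $p$-Frobenius (rather than an arbitrary overconvergent endomorphism) is used — is establishing $t^\nu \in \mathcal{O}_{\mathcal{E}}^m(-p)$ with the precise shift $-p$, so that Lemma \ref{lemma: decomposition of frobenius action} applies; this rests on the observation that the non-$t^p$ coefficients of $t^\nu$ are $p$-adically small, which forces $w_k(t^\nu) = p$ for small $k$.
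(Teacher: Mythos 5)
Your proof is correct and follows exactly the paper's strategy: both reduce to the observation that a $p$-Dwork operator is determined by $v(1),\dots,v(t^{p-1})$, take $m$ large enough that $t^\nu\in\mathcal{O}_{\mathcal{E}}^m(-p)$ and each $v(t^i)$ lies in $\mathcal{O}_{\mathcal{E}}^{m/p}\otimes\Q_p$, and then invoke Lemma \ref{lemma: decomposition of frobenius action}. You supply the overconvergence and semilinearity bookkeeping that the paper's two-sentence proof leaves implicit, and you correctly use the shift $(-p)$ required by Lemma \ref{lemma: decomposition of frobenius action} where the paper's proof has a typo $(p)$.
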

	
	\begin{proof}
		Let $m$ be large enough so that $t^\nu \in \mathcal{O}_\mathcal{E}^m(p)$ and
		$v(t^i) \in \mathcal{O}_{\mathcal{E}}^\frac{m}{p} \otimes \Q_p$ for
		$i=0, \dots, p-1$. The result then follows from Lemma \ref{lemma: decomposition of frobenius action}.
	\end{proof}
	
	Recall the spaces $\mathcal{R}=\bigoplus_{Q \in W} \mathcal{E}_Q$ and $\mathcal{R}^\dagger=\bigoplus_{Q \in W} \mathcal{E}_Q^\dagger$ introduced in \S \ref{paragraph: global to semi-local setup}.
	We give $\mathcal{R}$ a norm by giving each summand the Gauss norm on $\mathcal{E}_Q$.
	In particular, $\mathcal{R}_0=\mathcal{O}_{\mathcal{R}}$. 
	 Let $V$ be a subspace contained in $\mathcal{R}^\dagger$ and endow $V$ with
	 the subspace norm. 
	 
	 \begin{definition}
	 	We say that an $L$-linear operator $u:\mathcal{R}^\dagger \to \mathcal{R}^\dagger$ (resp. $E$-linear operator $v:\mathcal{R}^\dagger \to \mathcal{R}^\dagger$) is a $q$-\emph{Dwork operator} (resp. $p$-\emph{Dwork operator}) if it is the direct sum of $q$-Dwork operators (resp. $p$-Dwork operators)
	 	and each summand $\mathcal{E}_Q^\dagger$. We say that $u:V \to V$ (resp. $v: V \to V$)
	 	is a $q$-\emph{Dwork operator} (resp. $p$-\emph{Dwork operator}) if
	 	it is the restriction of a $q$-Dwork operator (resp. $p$-Dwork operator) on $\mathcal{R}^\dagger$. 
	 \end{definition}
 
 	\subsubsection{Dwork operators are nuclear} \label{subsubsection: Dwork operators are nuclear}
	Recall the definitions of $\mathcal{S}$, $\mathcal{S}^\dagger$, and $\mathcal{W}$ from \S \ref{paragraph: global to semi-local setup}, as well as the projection map $pr:\mathcal{R} \to \mathcal{S}$.
	We assume $V$ satisfies the following condition:
	\begin{align} \label{condition: surjective on tails with finite coker}
	\textsl{We have } pr(V_0)=\mathcal{O}_{\mathcal{S}^\dagger} \textsl{ and
		$\ker(pr:V \to \mathcal{S}^\dagger)$ is finite dimensional}.
	\end{align} 
	\noindent This implies that $V$ surjects onto $\mathcal{S}^\dagger$.
	We will show that Dwork operators on $V$ are nuclear and become
	completely continuous when restricted to a small enough annulus around each $Q$. This
	is analogous to \cite[Theorem 2.1 and Lemma 2.5]{Monsky-forma_cohomology3}, where
	Monsky shows that Dwork operators on $n$-dimensional ``overconvergent'' affine space
	become completely continuous when restricted to a larger radius of convergence. 
	The first step is to choose an orthonormal
	basis of $V$ that is reasonable to compute with. Define $\mu: W \to \mathbb{N}$
	by
	\begin{align*}
	\mu(P_{*,i}) &= \begin{cases}
	1 & *=0,\infty \\
	p & * = 1.
	\end{cases}
	\end{align*}
	Then define $J \subset \mathbb{N} \times W$ by
	\begin{align} \label{J definition}
	J &= \{(n,Q) ~~ | ~~ n\geq \mu(Q)\}.
	\end{align}
	The set $\{u_Q^{-n}\}_{(n,Q) \in J}$ is an orthonormal basis 
	for $\mathcal{S}^\dagger$ (here we identify $u_Q^{-n}$ with the
	element of $\mathcal{S}^\dagger$ with $u_Q^{-n}$ in the $Q$-th coordinate and $0$
	in the other coordinates). Let $K$ be a set with $\dim_L(\ker_L(pr|_V))$ elements and set
	$I=J \sqcup K$. 
	For $i=(n,Q) \in J$, choose an element $e_i \in V_0$ 
	with $pr(e_i)=u_Q^{-n}$. This exists by \eqref{condition: surjective on tails with finite coker}.
	We also choose an orthonormal basis $\{e_i\}_{i \in K}\in V_0$ of $\ker_L(pr|_V)$
	indexed by $K$. Then $B(I)=\{e_i\}_{i \in I}$ is an orthonormal basis of $V$. 
	By \eqref{equation: projecting onto tails ends up in W} there exists $c_i \in \mathcal{W}$
	for each $i \in I$ with
	\begin{align} \label{eq how the basis looks}
	e_i =\begin{cases}
	u_Q^{-n} + c_{i} & i=(n,Q) \in J \\
	c_i & i \in K.
	\end{cases} 
	\end{align}
	
	\begin{definition}
		For $m > 0$, define a family $(b_{m,n})_{\mathbb{N}} \subset \mathcal{O}_L$
		such that $\{\dots, t^2,t^1,1,b_{m,1}t^{-1}, b_{m,2}t^{-2}, \dots\}$
		is an integral basis of $\mathcal{O}_{\mathcal{E}}^{m}\otimes \Q_p$ (e.g.
		if $m=p-1$ we may take $b_{m,n}=\pi^n$).
		We then define $x_m =\{x_{m,i}\}_{i \in I} \in \mathbf{c}(I)$ by
		\begin{align*}
		x_{m,i}&= \begin{cases}
		1 & i \in K \\
		b_{m,n} & i=(n,Q) \in J,
		\end{cases}
		\end{align*}
		and we set $V_m=\mathbf{c}(I,x_m)$. Note that $V_m$
		consists of the elements of $V$ whose $Q$-th coordinate
		are bounded
		analytic functions on the annulus $0<v_p(u_Q)\leq  \frac{1}{m}$. 
	\end{definition}
	
	\begin{lemma} \label{lemma: results about smaller radii on $V$}
		The following hold:
		\begin{enumerate}[label=\roman*.]
			\item For $m_2>m_1$, we have $V_{m_1}\subset V_{m_2}$.
			\item For each $m>0$ we have $\mathbf{s}(I,x_m) = V \cap (\bigoplus_{Q \in W}\mathcal{O}_{\mathcal{E}}^m\otimes \Q_p)$.
			\item We have
			\[V=\bigcup_{m>0} \mathbf{s}(I,x_m)= \bigcup_{m>0} V_m.\]
		\end{enumerate}
	\end{lemma}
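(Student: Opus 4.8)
The plan is to establish (i), (ii), (iii) in that order, with (ii) carrying essentially all the work. For (i), by the observation following Definition~\ref{definition: partial ordering} it suffices to show $x_{m_2} > x_{m_1}$, i.e.\ $v_p(x_{m_1,i}) - v_p(x_{m_2,i}) \to \infty$. Over the finite set $K$ this difference is $0$, so only the indices $i = (n,Q) \in J$ are relevant, where it equals $v_p(b_{m_1,n}) - v_p(b_{m_2,n})$. Since $\{b_{m,n}u_Q^{-n}\} \cup \{u_Q^{j}\}_{j \ge 0}$ is an integral basis of $\mathcal{O}_{\mathcal{E}_Q}^m \otimes \Q_p$, the monomial $b_{m,n}u_Q^{-n}$ lies in $\mathcal{O}_{\mathcal{E}_Q}^m$ and $v_p(b_{m,n})$ is the least value of $v_p$ on $\mathcal{O}_L$ that is $\ge n/m$; hence $n/m \le v_p(b_{m,n}) < n/m + \tfrac1e$, so $v_p(b_{m_1,n}) - v_p(b_{m_2,n}) \ge n\bigl(\tfrac1{m_1}-\tfrac1{m_2}\bigr) - \tfrac1e \to \infty$ because $m_1 < m_2$ and $W$ is finite.

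For the inclusion $\mathbf{s}(I,x_m) \subseteq V \cap \bigl(\bigoplus_{R \in W} \mathcal{O}_{\mathcal{E}_R}^m \otimes \Q_p\bigr)$ in (ii), write an element as $v = \sum_i a_i x_{m,i}e_i$ with $\inf_i v_p(a_i) > -\infty$. By \eqref{eq how the basis looks}, for $i = (n,Q) \in J$ we have $x_{m,i}e_i = b_{m,n}u_Q^{-n} + b_{m,n}c_i$ with $c_i \in \mathcal{W}$, the first summand lying in $\mathcal{O}_{\mathcal{E}_Q}^m$ as above. For the second, \eqref{equation: projecting onto tails ends up in W} shows $c_i$ has norm $\le 1$ and lies in $\ker(pr)$; hence each component of $c_i$ at $R \in W$ lies in $\mathcal{M}_R \cap \mathcal{O}_{\mathcal{E}_R}$ and has no polar term of order $> \mu(R) - 1 \le p - 1$, so — using the explicit integral bases of the $\mathcal{M}_R$ (copies of $\mathcal{A}$, of $\mathcal{O}_{\mathcal{E}}^{(p-1)/p}$, or of $\mathcal{O}_{\mathcal{E}}^{d_i(p-1)/p}$) together with the fact that $a(k) = 0$ for $k < p$ — that component is the sum of an element of $\mathcal{O}_L[[u_R]]$ and a polynomial in $u_R^{-1}$ of degree $< p$ with coefficients in $\mathcal{O}_L$. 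As $v_p(b_{m,n}) \ge n/m$, multiplication by $b_{m,n}$ carries this component into $\mathcal{O}_{\mathcal{E}_R}^m$ as soon as $n \ge p$, and the finitely many remaining $i$ (those with $n < p$, and those in $K$) contribute terms of $\bigoplus_{R \in W} \mathcal{O}_{\mathcal{E}_R}^m \otimes \Q_p$ with denominators bounded uniformly in $i$. Thus there is $N = N(m)$ with $\pi^N x_{m,i}e_i \in \bigoplus_{R \in W} \mathcal{O}_{\mathcal{E}_R}^m$ for all $i$; since $\bigoplus_{R\in W} \mathcal{O}_{\mathcal{E}_R}^m$ is complete and $a_i x_{m,i} \to 0$, the series converges in $\bigoplus_{R\in W} \mathcal{O}_{\mathcal{E}_R}^m \otimes \Q_p$, and its sum, being an overconvergent limit of elements of $V$, lies in $V$.

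For the reverse inclusion, take $v \in V$ with all components in $\mathcal{O}_{\mathcal{E}_R}^m \otimes \Q_p$ and expand $v = \sum_i v_i e_i$ in the orthonormal basis $B(I)$. Over $K$ the coordinates are bounded below in valuation and $x_{m,i} = 1$, so nothing is needed there. Applying $pr$ and using $pr(e_i) = u_Q^{-n}$ for $i = (n,Q) \in J$ and $pr(e_i) = 0$ for $i \in K$, the $Q$-component of $pr(v)$ is $\sum_{n \ge \mu(Q)} v_{(n,Q)}u_Q^{-n}$; since truncation preserves $\mathcal{O}_{\mathcal{E}_Q}^m$, this lies in $\mathcal{O}_{\mathcal{E}_Q}^m \otimes \Q_p$, forcing $v_p(v_{(n,Q)}) \ge n/m - N'/(p-1)$ for some $N'$ independent of $n$; combined with $v_p(b_{m,n}) < n/m + \tfrac1e$ this gives $\inf_i\bigl(v_p(v_i) - v_p(x_{m,i})\bigr) > -\infty$, i.e.\ $v \in \mathbf{s}(I,x_m)$, completing (ii). Part (iii) is then formal: $V_m = \mathbf{c}(I,x_m) \subseteq \mathbf{s}(I,x_m) \subseteq V$ by (ii), so $\bigcup_m V_m \subseteq \bigcup_m \mathbf{s}(I,x_m) \subseteq V$; conversely every $v \in V \subseteq \mathcal{R}^\dagger = \bigoplus_Q \mathcal{E}_Q^\dagger$ has its finitely many components in a common $\mathcal{O}_{\mathcal{E}_Q}^m \otimes \Q_p$ (since $\mathcal{E}_Q^\dagger = \bigcup_m \mathcal{O}_{\mathcal{E}_Q}^m \otimes \Q_p$), whence $v \in \mathbf{s}(I,x_m)$ by (ii), and by the estimate $x_{m'} > x_m$ from (i) we get $\mathbf{s}(I,x_m) \subseteq \mathbf{c}(I,x_{m'}) = V_{m'}$ for any $m' > m$.

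The hard part will be the inclusion $\subseteq$ in (ii): the weights $b_{m,n}$ have to absorb the tails $b_{m,n}c_i$ \emph{uniformly in $i$}, which works only because each $c_i$ genuinely lies in $\ker(pr) \cap \mathcal{W}$ — so its polar part has order $< p$ — together with the precise form of the integral bases of the $\mathcal{M}_R$; everything else is bookkeeping with the partial ordering and with the definition of $b_{m,n}$.
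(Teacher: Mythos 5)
Your argument is correct, and it is the detailed unpacking of what the paper disposes of in one line (``This follows from the definition of $V_m$''): you estimate $v_p(b_{m,n})$, use that each $c_i$ lies in $\ker(pr)\cap\mathcal{W}$ and hence has bounded polar part with $\mathcal{O}_L$-coefficients, and then read off parts (i)--(iii) from these estimates together with the Cartesian square \eqref{overconvergent expansion diagram}. There is no genuinely different route here -- only a choice of how much bookkeeping to display -- so the two proofs coincide in substance; the one small slip, that the reference to \eqref{equation: projecting onto tails ends up in W} should be to the fact $pr(e_i)=u_Q^{-n}$ forcing $c_i=e_i-u_Q^{-n}\in\ker(pr)$, is cosmetic.
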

	\begin{proof}
		This follows from the definition of $V_m$. 
	\end{proof}
	\begin{proposition} \label{corollary: completely continuous  on small radius}
		Let $V$ be a subspace of $\mathcal{R}^\dagger$ and assume that
		$V$ satisfies condition \eqref{condition: surjective on tails with finite coker}. 
		Let $v:V \to V$ be a $p$-Dwork operator and let $u=v^a$. 
		For $m$ sufficiently large, $v$ and $u$ restrict to completely continuous operators on the Banach space
		$V_m$. In particular, $v$ and $u$ are nuclear operators. Furthermore,
		if $x \in \mathbf{c}(I)$ and $v(\mathbf{s}(I,x)) \subset \mathbf{s}(I,x)$,
		the Fredholm determinant $\det(1-sv|B(I_E,x))$ exists and is equal to $P(v,s)$.
	\end{proposition}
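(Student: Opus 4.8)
The plan is to combine Lemma~\ref{lemma: Dwork growth} with the Fredholm-theoretic lemmas developed in this section. Lemma~\ref{lemma: Dwork growth} will show that $v$ ``contracts'' the radius-$m$ subspaces $V_m$ once $m$ is large, after which complete continuity on each $V_m$, nuclearity on $V$, and the identity with $P(v,s)$ all follow formally from the machinery already in place.

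First I would pass to the ambient operator on $\mathcal{R}^\dagger=\bigoplus_{Q\in W}\mathcal{E}_Q^\dagger$. By definition $v$ is the restriction of a direct sum of $p$-Dwork operators on the finitely many summands $\mathcal{E}_Q^\dagger$, so Lemma~\ref{lemma: Dwork growth} applied to each summand yields an $m_0$ with $v\big(\bigoplus_{Q\in W}\mathcal{O}_{\mathcal{E}}^{m}\otimes\Q_p\big)\subseteq\bigoplus_{Q\in W}\mathcal{O}_{\mathcal{E}}^{m/p}\otimes\Q_p$ for all $m\geq m_0$ (the two conditions in the proof of that lemma are monotone in $m$, so one $m_0$ works). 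Intersecting with $V$, using $v(V)\subseteq V$ and the description $\mathbf{s}(I,x_m)=V\cap\big(\bigoplus_{Q\in W}\mathcal{O}_{\mathcal{E}}^{m}\otimes\Q_p\big)$ from the lemma on smaller radii in $V$, this becomes
\[ v(\mathbf{s}(I,x_m))\subseteq \mathbf{s}(I,x_{m/p}) \qquad (m\geq m_0). \]
Since the coefficients $b_{m,n}$ defining $x_m$ satisfy $v_p(b_{m,n})=\frac{n}{m}+O(1)$, one checks $v_p(x_{m/p,i})-v_p(x_{m,i})\to\infty$ over $i\in I$, i.e.\ $x_m>x_{m/p}$ in the partial ordering of Definition~\ref{definition: partial ordering}; the same estimate gives $\mathbf{s}(I,x_{m/p})\subseteq\mathbf{c}(I,x_m)=V_m$, so in particular $v$ maps $V_m$ into itself.

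Now fix $m\geq m_0$. Since $V_m=\mathbf{c}(I,x_m)\subseteq\mathbf{s}(I,x_m)$, the previous paragraph gives $v(\mathbf{c}(I,x_m))\subseteq\mathbf{s}(I,x_{m/p})$ with $x_m>x_{m/p}$, so the $\nu^{-1}$-semilinear analogue of the lemma that expanding the radius forces complete continuity shows that $v$ restricts to a completely continuous operator on the Banach space $V_m$. As $v(V_m)\subseteq V_m$, the iterate $u=v^a$ is a composition of completely continuous operators, hence completely continuous on $V_m$ as well; here $u$ is genuinely $L$-linear, since $\sigma=\nu^a$ acts trivially on $L$. For nuclearity, recall $V=\bigcup_{m>0}V_m=\bigcup_{m\geq m_0}V_m$ from the lemma on smaller radii; applying the corollary that a union of completely continuous pieces is nuclear (and its $E$-linear analogue) to the family $\{x_m\}_{m\geq m_0}$ shows that $v$ and $u$ are nuclear on $V$. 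Finally, given $x\in\mathbf{c}(I)$ with $v(\mathbf{s}(I,x))\subseteq\mathbf{s}(I,x)$, the ``furthermore'' clause of that same corollary, in its $E$-linear form with the basis $B(I_E,x)$ and the identification $\mathbf{c}(I_E,x)=\mathbf{c}(I,x)$, gives that $\det(1-sv\,|\,B(I_E,x))$ exists and equals $P(v,s)$.

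I do not expect any single step to be a genuine obstacle: the substantive input is Lemma~\ref{lemma: Dwork growth} (the analogue of Monsky's complete-continuity result) and the corollary on unions of completely continuous operators, both already available. The points that require care are bookkeeping: applying Lemma~\ref{lemma: Dwork growth} uniformly over all $Q\in W$ and all $m\geq m_0$, verifying $x_m>x_{m/p}$ and the stability $v(V_m)\subseteq V_m$ from the valuations of the $b_{m,n}$, and keeping the $E$-linear framework for the $\nu^{-1}$-semilinear operator $v$ separate from the $L$-linear framework for $u=v^a$.
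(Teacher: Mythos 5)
Your proof is correct and follows essentially the same route as the paper's: apply Lemma~\ref{lemma: Dwork growth} on each summand to get $v(\mathbf{s}(I,x_m))\subseteq\mathbf{s}(I,x_{m/p})$, observe $x_m>x_{m/p}$, invoke Lemma~\ref{lemma: expanding radius means completely continuous  operator} and Corollary~\ref{corollary: union of completely continuous  }. The additional bookkeeping you supply (monotonicity of the bound in Lemma~\ref{lemma: Dwork growth}, the inclusion $\mathbf{s}(I,x_{m/p})\subseteq\mathbf{c}(I,x_m)$ giving $v(V_m)\subseteq V_m$, and the remark that $u=v^a$ is genuinely $L$-linear) is all accurate and merely makes explicit what the paper leaves implicit.
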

	
	\begin{proof}
		Note that $v$ restricts to a Dwork operator for $\nu_Q$ on the $Q$-th coordinate of $\mathcal{R}^\dagger$. By Lemma \ref{lemma: Dwork growth} and Lemma \ref{lemma: results about smaller radii on $V$} there exists $m$ such that $v(V_m) \subset \mathbf{s}(I,x_{\frac{m}{p}})$. It is clear that $u(V_m) \subset \mathbf{s}(I,x_{\frac{m}{p}})$ as well.
		As $x_m>x_{\frac{m}{p}}$ it follows from Lemma \ref{lemma: expanding radius means completely continuous  operator}
		that $u$ and $v$ are completely continuous  when restricted to $V_m$.
		The proposition follows from Corollary \ref{corollary: union of completely continuous  }.
	\end{proof}

	\subsubsection{Computing Newton polygons using $a$-th roots}
	When estimating $NP_q(u)$ for a $q$-Dwork operator $u$ on $V$, it is convenient
	to work with a $p$-Dwork operator $v$ that is an $a$-th root of $u$. The reason
	is that we can translate $p$-adic bounds on $P(v,s)$ to $q$-adic bounds on $P(u,s)$.

	\begin{lemma} \label{lemma: L-function via U_p operator lemma}
		Assume that $V$ satisfies condition \eqref{condition: surjective on tails with finite coker}. 
		Let $v$ be a $p$-Dwork operator on $V$ and let $u=v^a$. 
		We further assume that $P(u,s)$ has coefficients in $E$ (a priori its
		coefficients could lie in $L$).
		Then
		$NP_q(u)=\frac{1}{a}NP_p(v)$.
	\end{lemma}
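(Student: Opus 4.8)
\emph{Plan.} The whole content of the lemma is the single power-series identity
\begin{align}\label{eq: ath root identity}
P(v,s)=P(u,s^{a}),
\end{align}
after which the statement is pure bookkeeping with the relations in \eqref{Newton polygon relations}. So I would first invoke Proposition~\ref{corollary: completely continuous  on small radius} to know that $v$ and $u=v^{a}$ are nuclear and that there is an $x\in\mathbf{c}(I)$ with $v(\mathbf{s}(I,x))\subset\mathbf{s}(I,x)$ and $u(\mathbf{s}(I,x))\subset\mathbf{s}(I,x)$, for which
\[
P(v,s)=\det(1-sv\mid B(I_{E},x)),\qquad P(u,s)=\det(1-su\mid B(I,x)).
\]
Thus both characteristic series become honest Fredholm determinants computed from the matrix of $v$ (resp. $u$) in a fixed basis, and it suffices to compare those two determinants.

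\emph{Proof of \eqref{eq: ath root identity}, and the main obstacle.} I would check this first in finite dimensions and then pass to the limit. If $v$ is $\nu^{-1}$-semilinear with matrix $N$ over $L$ in a basis $\{e_{i}\}$, then since $v(\zeta_{j}e_{i})=\zeta_{j}^{\nu^{-1}}v(e_{i})$ the matrix of $v$ over $E$ in the basis $\{\zeta_{j}e_{i}\}$ is block-cyclic with respect to the $a$ blocks $\{\zeta_{j}e_{i}\}_{i}$; equivalently, after extending scalars from $E$ to $L$ the space breaks into $a$ blocks that $v$ permutes cyclically, with round trip through all $a$ blocks equal to $v^{a}=u$. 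The determinant of such a block-cyclic operator is $\det(1-s^{a}u)$, which is \eqref{eq: ath root identity} in finite dimensions. To reach the nuclear case I would truncate $v$ (hence $u=v^{a}$) to operators of finite-dimensional image on the Banach completions $V_{m}$ on which they are completely continuous, apply the finite-dimensional identity to each truncation, and take the limit as in Monsky's treatment \cite{Monsky-forma_cohomology3}, using Lemma~\ref{lemma: as long as operator is defined radius doesn't matter} and Corollary~\ref{corollary: union of completely continuous  } to see that the Fredholm determinants do not change. This limiting step — tracking which completion each operator is completely continuous on while pushing the block-cyclic identity through — is the main technical obstacle; the finite-dimensional computation is routine. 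It is here that the hypothesis $P(u,s)\in E[[s]]$ enters, guaranteeing that the right-hand side of \eqref{eq: ath root identity} really is the $E$-series $P(u,\cdot)$ evaluated at $s^{a}$ rather than a Galois average of it; in particular \eqref{eq: ath root identity} recovers $P(u,s)\in E[[s]]$ a posteriori.

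\emph{Conclusion.} Granting \eqref{eq: ath root identity}, apply the first relation of \eqref{Newton polygon relations} with $Q=P(u,\cdot)$: since $P(u,s^{a})=P(v,s)$ we get $NP_{p}(v)=\{(ax,y)\mid (x,y)\in NP_{p}(u)\}$, i.e.\ $NP_{p}(u)=\{(x/a,y)\mid (x,y)\in NP_{p}(v)\}$. Apply the second relation of \eqref{Newton polygon relations} with $Q=P(u,\cdot)$: $NP_{p}(u)=\{(x,ay)\mid (x,y)\in NP_{q}(u)\}$, i.e.\ $NP_{q}(u)=\{(x,y/a)\mid (x,y)\in NP_{p}(u)\}$. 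Composing, $(x,y)\in NP_{q}(u)$ if and only if $(ax,ay)\in NP_{p}(v)$, which is precisely $NP_{q}(u)=\tfrac1a NP_{p}(v)$. Finally, \eqref{eq: ath root identity} shows each factor $1-\lambda_{i}s^{a}$ of $P(v,s)$ contributes $a$ equal slopes to $NP_{p}(v)$, so every slope of $NP_{p}(v)$ has multiplicity divisible by $a$ and $\tfrac1a NP_{p}(v)$ is genuinely a Newton polygon, so the statement is well-posed.
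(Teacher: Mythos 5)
The route you take is genuinely different from the paper's. The paper never asserts the power-series identity $P(v,s)=P(u,s^a)$; it instead proves the identity of Fredholm determinants
\[
\det(1-s^a u\mid B(I,x_m))^{\,a}\;=\;\prod_{\zeta^a=1}\det(1-\zeta s v\mid B(I_E,x_m)),
\]
obtained by combining the norm formula $\det_E(1-su)=N_{L/E}\det_L(1-su)$ (which uses the $E$-coefficients hypothesis to replace $N_{L/E}$ by an $a$-th power) with the elementary factorization $1-s^av^a=\prod_{\zeta^a=1}(1-\zeta sv)$. It then extracts only a Newton-polygon equality from this, noting that all factors $\det_E(1-\zeta sv)$ share the same Newton polygon. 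Your proposal instead goes for the stronger on-the-nose equality of characteristic series. That is a cleaner-looking endpoint, and your concluding bookkeeping with \eqref{Newton polygon relations} is correct; in fact, once you have $P(v,s)=P(u,s^a)$ the result is immediate, whereas the paper needs the extra observation that the $a$ root-of-unity twists have equal Newton polygons.

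That said, two points deserve scrutiny. First, the sentence ``the matrix of $v$ over $E$ in the basis $\{\zeta_j e_i\}$ is block-cyclic with respect to the $a$ blocks $\{\zeta_j e_i\}_i$'' is not correct: writing $v(\zeta_je_i)=\zeta_j^{\nu^{-1}}\sum_k n_{ki}e_k$ with $n_{ki}\in L$, the coordinate of $\zeta_j^{\nu^{-1}}n_{ki}$ along $\zeta_\ell e_k$ is in general nonzero for every $\ell$, so no block structure is visible in that basis. The block-cyclic structure only appears after base change to $L$: under $V\otimes_E L\cong\bigoplus_{i=0}^{a-1}V^{(i)}$ (the decomposition induced by $L\otimes_E L\cong\prod_\tau L$), the operator $v\otimes 1$ shifts the index cyclically, the round trip on $V^{(0)}\cong V$ is $u$, and $\det_L\bigl(1-s(v\otimes1)\bigr)=\det_E(1-sv)$. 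Your ``equivalently'' clause states this correctly, but the first formulation should simply be dropped, not offered as an equivalent description. Second, you are vague about where the $E$-coefficients hypothesis is actually used, and I don't think it is used in the argument you sketch: the block-cyclic computation, if carried out as above, gives $\det_E(1-sv)=\det_L(1-s^au)$ with no hypothesis at all, and since the left side is a priori in $E[[s]]$ this would \emph{prove} $P(u,s)\in E[[s]]$ rather than assume it. You half-acknowledge this (``recovers $P(u,s)\in E[[s]]$ a posteriori'') but do not resolve the tension. The paper's proof, by contrast, genuinely invokes the hypothesis to pass from $N_{L/E}(P(u,s))$ to $P(u,s)^a$. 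This is worth sorting out: either your route really does render the hypothesis redundant (plausible, and a small improvement), or there is a gap in the infinite-dimensional limiting step — e.g.\ the fact that truncating $v$ does not literally truncate $u=v^a$, so the finite-rank approximations $v_n^a$ and the Banach spaces on which they are completely continuous must be tracked with more care than the sentence ``apply the finite-dimensional identity to each truncation and take the limit'' suggests. As written, this limiting step is the weakest link and is precisely the technical content that the paper's norm-and-factorization argument is designed to sidestep.
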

	
	\begin{proof}
		This is a slight modification
		of an argument due to Dwork (see \cite[\S 7]{Dwork-hypersurface_2} or
		\cite[Lemma 2]{Bombieri-exponential_sums}). 
		By Proposition \ref{corollary: completely continuous  on small radius},
		we may choose $m$ large enough so that $u$ and $v$ are completely continuous  completely continuous  when
		restricted to $V_m$. Thus, by  Proposition \ref{proposition: completely continuous  operators have good Fredholm determinant} it suffices to prove the result for the Fredholm determinants of $u$ and $v$ on $V_m$. 
		Recall that 
		$\det(1-su|B(I_E,x_m))$ denotes the Fredholm determinant of
		$u$ on $V_m$ viewed as an $E$-linear operator. Then we have
		\begin{align*}
		\det(1-su|B(I_E,x_m)) &= N_{L/E}\det(1-su|B(I,x_m)),
		\end{align*}
		where $N_{L/E}$ denotes the norm from $L$ to $E$.
		By assumption $\det(1-su|B(I,x_m))$ has coefficients in 
		$E$. This implies
		\begin{align}
			\det(1-su|B(I_E,x_m)) &= \det(1-su|B(I,x_m))^a.\label{Up reduction eq1}
		\end{align}
		Since $u=v^a$, we know by \eqref{Up reduction eq1} that
		\begin{align*}
		\det(1-s^au|B(I,x_m))^a &= 
		\prod_{\zeta^a=1} 
		\det(1-\zeta^a sv|B(I_E,x_m)).
		\end{align*}
		Each term in the product has the same Newton polygon, which means
		\begin{align*}
		NP_q(\det(1- sv|B(I_E,x_m)))&= NP_q(\det(1-s^a u|B(I,x_m))). 
		\end{align*}
		The lemma follows from \eqref{Newton polygon relations}.
	\end{proof}

	\section{Finishing the proof of Theorem \ref{main theorem}}

	\label{section: proof of theorems}

	\subsection{The Monsky trace formula}
	\label{subsection: MW trace formula}
	We now give a brief overview of the Monsky trace formula
	for curves. For a complete treatment, see \cite{Monsky-forma_cohomology3} 
	or \cite[\S 10]{Wan-higher_rank_dwork_conjecture}.  
	Let
	$\Omega_{\mathcal{B}^\dagger}^i$ denote the space of
	$i$-forms of $\mathcal{B}^\dagger$ (see \cite[\S 4]{Monsky-forma_cohomology3}).
	The map $\sigma$ induces a map $\sigma_i: \Omega^i_{\mathcal{B}^\dagger} \to 
	\Omega^i_{\mathcal{B}^\dagger}$
	that sends $xdy$ to $x^\sigma d(y^\sigma)$. Following
	\cite[\S 3]{van_der_Put-MW_cohomology}, there are trace maps
	$
	\text{Tr}_i: \Omega_{\mathcal{B}^\dagger}^i \to 
	\sigma(\Omega_{\mathcal{B}^\dagger}^i)$.
	We then let $\Theta_i$ denote the map $\sigma_i^{-1} \circ \text{Tr}_i$.
	Note that for $\omega \in \Omega^1_{\mathcal{B}^\dagger}$ and $x \in \mathcal{B}^\dagger$ we have
	\begin{align} \label{Dwork operator property on forms}
	\Theta_1(x\omega^\sigma) &= \Theta_0(x)\omega.
	\end{align}	
	\noindent Now consider the $L$-function
	\begin{align} \label{introduction of L-function:2}
	L(\rho,V,s)&= \prod_{x \in V} \frac{1}{1 - \rho(Frob_x) s^{\deg(x)}},
	\end{align}
	which is a slight modification of \eqref{introduction of L-function}. 
	The Monsky trace formula states
	\begin{align} \label{Main Monsky-Washnitzer}
	L(\rho,V,s) &= \frac{P(\Theta_1 \circ \alpha,s)}
	{P(\Theta_0 \circ \alpha,s)}.
	\end{align}
	Thus, we may estimate $L(\rho,V,s)$ by
	estimating operators on the space of $1$-forms and $0$-forms.
	
	In our situation we may simplify \eqref{Main Monsky-Washnitzer}. 
	The ring homomorphism $\mathcal{A}^\dagger \to \mathcal{B}^\dagger$ is \`etale,
	which means $\Omega_{\mathcal{B}^\dagger} = \pi^* \Omega_{\mathcal{A}^\dagger}$. 
	Since
	$\Omega_{\mathcal{A}^\dagger}$ is equal to $\mathcal{A}^\dagger \frac{dT}{T}$, it 
	follows that 
	$\Omega_{\mathcal{B}^\dagger} = \mathcal{B}^\dagger \frac{dT}{T}$.
	Furthermore, as $\frac{dT}{T}= \frac{1}{q}(\frac{dT}{T})^\sigma$ we know by
	\eqref{Dwork operator property on forms} that
	\begin{align*}
	\Theta_1\Big (x\frac{dT}{T}\Big ) &=\frac{1}{q} \Theta_0(x) \frac{dT}{T},
	\end{align*}
	which means $\Theta_1=U_q$ and $\Theta_0=qU_q$. Therefore \eqref{Main 
		Monsky-Washnitzer}
	becomes
	\begin{align} \label{MW via characteristic}
	L(\rho,V,s) &= \frac{P(U_q \circ \alpha, s)}
	{P(U_q\circ \alpha, qs)}.
	\end{align}
	As $P(U_q \circ \alpha, s) \in 
	1+s\mathcal{O}_L[[s]]$, we know $\frac{1}{P(U_q \circ \alpha, qs)}$ lies in $ 1+qs\mathcal{O}_L[[qs]]$.
	This means each slope of $NP_q\Big(\frac{1}{P(U_q \circ \alpha, qs)}\Big)$ is at least one. In particular, we have
	\begin{align} \label{L-function via U_p operator:bad}
	NP_q(L(\rho,V,s))_{<1} &= NP_q(U_q \circ \alpha)_{<1}.
	\end{align}
	By \eqref{introduction of L-function:2}, we know $L(\rho,V,s)$
	has coefficients in $\mathcal{O}_E$. From \eqref{MW via characteristic} we have $P(U_q\circ \alpha,s)=\prod_{i=0}^\infty L(\rho,V,q^is)$,
	so $P(U_q \circ \alpha,s)$ has coefficients in $\mathcal{O}_E$. Then Lemma \ref{lemma: L-function via U_p operator lemma}
	gives
	\begin{align} \label{equation: L function estimate comes from U_p}
	\frac{1}{a}NP_p(U_p \circ \alpha_0)_{<1} &= NP_q(L(\rho,V,s))_{<1}.
	\end{align}

	\subsection{Estimating $NP_p(U_p\circ \alpha_0)$}
	\label{subsection: estimating NP_p}
	We now estimate the $p$-adic Newton polygon of $U_p\circ \alpha_0$
	acting on $\mathcal{B}^\dagger$. To state our main result, we
	recall that $r_*$ is the cardinality of $\eta^{-1}(*)$ and that $d_i$
	is the Swan conductor of $\rho$ at $\tau_i$.

	\begin{proposition}  \label{proposition: NP bounds with operator assumption1}
		The operator $U_p\circ \alpha_0$ is nuclear on $\mathcal{B}^\dagger$ and
		\begin{align*}
		(\frac{1}{a}NP_p(U_p\circ \alpha_0))_{<1} &\succeq \{\underbrace{0,\dots,0}_{g-1+r_0+r_1+r_\infty} \}
		\bigsqcup \Bigg (\bigsqcup_{i=1}^{\mathbf{m}} \Bigg\{ \frac{1}{d_i}, \dots, \frac{d_i-1}{d_i} \Bigg\} \Bigg ).
		\end{align*}
	\end{proposition}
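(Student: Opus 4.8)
The plan is to reduce to the semi-local operator $U_p\circ\vec c$ of \S\ref{paragraph: global to semi-local setup} and then feed it into the functional-analytic machinery of \S\ref{section: Dwork operators}. Multiplication by the unit $\vec b\in\mathcal{O}_{\mathcal{R}^\dagger}^\times$ is an automorphism of $\mathcal{R}^\dagger$, and by \eqref{local change of Frobenius eq} it intertwines $U_p\circ\alpha_0$ (acting on $\mathcal{B}^\dagger\subseteq\mathcal{R}^\dagger$) with $U_p\circ\vec c$ acting on $V':=\vec b\,\mathcal{B}^\dagger$, since $(U_p\circ\vec c)(\vec b x)=\vec b\cdot(U_p\circ\alpha_0)(x)$. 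Because characteristic series and Newton polygons are conjugation invariant, it suffices to prove that $U_p\circ\vec c$ is nuclear on $V'$ and that $(\tfrac1a NP_p(U_p\circ\vec c|_{V'}))_{<1}$ lies above the stated polygon. Now $U_p\circ\vec c|_{V'}$ is a $p$-Dwork operator on $V'$ (the restriction of the direct sum $\bigoplus_{Q\in W}U_p\circ c_Q$, which preserves $V'$), so once $V'$ is shown to satisfy \eqref{condition: surjective on tails with finite coker}, Proposition \ref{corollary: completely continuous  on small radius} gives nuclearity of $U_p\circ\vec c|_{V'}$ (hence of $U_p\circ\alpha_0$, the first assertion) and identifies $P(U_p\circ\vec c|_{V'},s)$ with the Fredholm determinant on each of the spaces $V_m$.

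To verify \eqref{condition: surjective on tails with finite coker} I would first establish it for $\mathcal{B}^\dagger$ and then transport it: since $\vec b\equiv1\bmod\gothm$, multiplication by $\vec b$ is an isometry of $\mathcal{R}^\dagger$ inducing the identity modulo $\gothm$, so it preserves $\dim_L\ker(pr)$, and a successive-approximation argument upgrades surjectivity of $pr$ modulo $\gothm$ to surjectivity. For $\mathcal{B}^\dagger$ itself: after increasing $q$ so that $\deg(\eta)=(p-1)r_1>2g-2$ (possible, since composing with the auxiliary power maps in Lemma \ref{lemma: map to p1} makes $\deg\eta$ as large as we like), $\ker(pr|_{\mathcal{B}^\dagger})$ is the space of overconvergent functions on $\mathcal{V}^{rig}$ regular at the points above $\{0,\infty\}$ and with pole of order $\le p-1$ at each $P_{1,i}$, i.e.\ $H^0$ of $\mathcal{O}((p-1)D_1)$ on the proper $L$-curve $\mathbb{X}_L$ with $D_1=\sum_{i=1}^{r_1}[P_{1,i}]$; Riemann--Roch and \eqref{riemann-hurwitz eq} give $\dim\ker(pr|_{\mathcal{B}^\dagger})=(p-1)r_1-g+1=g-1+r_0+r_1+r_\infty$, and the vanishing $H^0(\mathbb{X}_L,\Omega(-(p-1)D_1))=0$ shows via the residue pairing that the principal parts of order $\le p-1$ at the $P_{1,i}$ surject onto the genus-$g$ cokernel of the full principal-part map, so $pr(\mathcal{B}^\dagger_0)=\mathcal{O}_{\mathcal{S}^\dagger}$.

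The final step is the column estimate. Using \eqref{eq how the basis looks} and \eqref{equation: projecting onto tails ends up in W}, take the orthonormal basis $B(I)=\{e_i\}$ of $V'$ with $e_i=u_Q^{-n}+c_i$, $c_i\in\mathcal{W}$, for $i=(n,Q)\in J$ and $\{e_i\}_{i\in K}$ a basis of $\ker(pr)$, and choose the weight $x\in\mathbf{c}(I)$ that scales the $(n,Q)$-direction by $p^{a(n)}$, $\pi^{pn}$, or $\pi_{d_i}^{pn}$ according as $Q$ is of type I, II, or III (so $x$ matches the integral bases of $\mathcal{M}_Q\cong\mathcal{A}$, $\mathcal{O}_{\mathcal{E}}^{\frac{p-1}{p}}$, $\mathcal{O}_{\mathcal{E}}^{\frac{d_i(p-1)}{p}}$) and fixes the $K$-directions. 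Since $(U_p\circ\vec c)(c_i)\in\mathcal{W}$ by \eqref{equation: W is preserved} and $(U_p\circ\vec c)(u_Q^{-n})=(U_p\circ c_Q)(u_Q^{-n})$ is governed by \eqref{UP computation type 1}, \eqref{UP computation type 2}, \eqref{UP computation type 3}, one computes $\mathbf{col}_{(n,Q),1}(U_p\circ\vec c,B(I_E,x))\ge n$ for $Q$ of type I or II, $\ge n/d_i$ for $Q=\tau_i$ of type III, and $\ge0$ for $i\in K$. These tend to $\infty$, so $U_p\circ\vec c$ is tight on $\mathbf{s}(I_E,x)$ by Remark \ref{remark:tightness on first coordinate}, and Lemma \ref{lemma: estimating NP by estimating columns} gives $\tfrac1a NP_p(U_p\circ\alpha_0)=\tfrac1a NP_p(U_p\circ\vec c|_{V'})\succeq\{\mathbf{col}_{(i,1)}\}_{i\in I}$. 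Truncating at slopes $<1$: the type-I and type-II columns (where $n\ge\mu(Q)$, so $n\ge p$ for type I and $n\ge1$ for type II) have valuation $\ge1$ and are discarded, the $K$-columns supply $g-1+r_0+r_1+r_\infty$ slopes $\ge 0$, and the type-III columns at $(n,\tau_i)$ supply the slopes $n/d_i$, $1\le n\le d_i-1$, which together give exactly the asserted polygon.

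The step I expect to be most delicate is the bookkeeping in this last paragraph: the local estimates \eqref{UP computation type 1}--\eqref{UP computation type 3} are phrased with the normalizations $p^{a(j+pn)}$, $\pi^{pn}$, $\pi_{d_i}^{pn}$ already built in, and one must carefully translate these into column valuations in the weighted basis $B(I_E,x)$ while tracking how the $\mathcal{W}$-components of the $e_i$ and their images under $pr$ contribute; the comparison of the resulting polygon with the claimed one (in particular accounting for all zero slopes) also needs attention. Transporting \eqref{condition: surjective on tails with finite coker} from $\mathcal{B}^\dagger$ to $V'$ across the conjugation by $\vec b$, which is not $pr$-equivariant, is the other point requiring care.
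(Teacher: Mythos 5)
Your proposal reproduces the paper's own argument essentially line by line: twist by $\vec b$ to replace $U_p\circ\alpha_0$ by $U_p\circ\vec c$ on $\vec b\,\mathcal{B}^\dagger$, verify condition \eqref{condition: surjective on tails with finite coker} by reducing mod $\gothm$ and applying Riemann--Roch to the divisor $(p-1)\sum[P_{1,i}]$ together with \eqref{riemann-hurwitz eq}, then choose the weighted basis \eqref{X definition}, estimate columns using \eqref{UP computation type 1}--\eqref{UP computation type 3}, and invoke Lemma \ref{lemma: estimating NP by estimating columns} and Proposition \ref{corollary: completely continuous  on small radius}. Two small slips, neither of which affects the conclusion: increasing $q$ to force $\deg\eta>2g-2$ is unnecessary, since $\deg\eta=(p-1)r_1=2g-2+r_0+r_1+r_\infty>2g-2$ automatically; and for a type-I point (with $\eta(Q)=1$) the correct column estimate from \eqref{UP computation type 1} is $\mathbf{col}_{((n,Q),1)}\geq\lfloor n/p\rfloor$ (write $n=j+pm$, then $\mathbf{col}\geq m$, not $\geq n$) --- this is still $\geq 1$ on the relevant range $n\geq\mu(Q)=p$, so those columns are discarded in the $<1$ truncation exactly as you say.
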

	\noindent The proof is broken into several steps. The main idea is to embed $\mathcal{B}^\dagger$
	into $\mathcal{R}^\dagger$, \emph{twist} the space in some sense,
	and then estimate the columns of an operator on a smaller subspace
	using the computations in \S \ref{subsubsection: the local frob and Up}.
	
	\paragraph{The twisted space}
	We view $\mathcal{B}^\dagger$ as a subspace of $\mathcal{R}^\dagger$ 
	by expanding each function $f \in \mathcal{B}^\dagger$ in terms of each parameter $u_Q$.
	However, the global $p$-Frobenius structure $\alpha_0$ may not have the nice growth
	properties that the local $p$-Frobenius structures we computed in \S \ref{subsection: local rank one crystals} have. The workaround is to ``twist'' the space and the operator. 
	From \eqref{local change of Frobenius eq}, we know that the operator $U_p \circ \vec{c}$
	acts on the space $\vec{b}(\mathcal{B}^\dagger)$ and we have:
	\begin{align}
		P(U_p\circ \alpha_0,s)&=P(U_p \circ \vec{c},s). \label{equation: twisting space}
	\end{align}

	\begin{proposition} \label{local to global: kernel and cokernel}
		Let $\widehat{W}$ (resp. $W^\dagger$) be the space $\widehat{\mathcal{B}}$ or $\vec{b}(\widehat{\mathcal{B}})$ (resp. $\mathcal{B}^\dagger$ or $\vec{b}(\mathcal{B}^\dagger)$). The following hold:
		\begin{enumerate}
			\item We have $pr(W_0^\dagger)= \mathcal{O}_{\mathcal{S}^\dagger}$ 
			and $pr(\widehat{W})= \mathcal{O}_{\mathcal{S}}$.
			\item Both $\ker(pr:W^\dagger \to \mathcal{S}^\dagger)$ and
			$\ker(pr:\widehat{W} \to \mathcal{S})$ have dimension $g-1+r_0+r_1+r_\infty$ as vector
			spaces over $L$.
		\end{enumerate}
	\end{proposition}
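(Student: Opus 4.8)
The plan is to reduce both assertions to a Riemann--Roch computation on $\mathbb{X}$ and its generic fibre $X_L$. The key point is that the truncations $\mathcal{E}_Q \to \mathcal{E}_Q^{\leq -1}$ at the $Q$ lying over $0$ or $\infty$, and $\mathcal{E}_Q \to \mathcal{E}_Q^{\leq -p}$ at the $Q$ lying over $1$ (where $e_Q = p-1$), are arranged exactly so that an $f \in \mathcal{B}^\dagger$ lies in $\ker(pr)$ if and only if its expansion is holomorphic at every point of $\eta^{-1}(\{0,\infty\})$ and has a pole of order at most $p-1 = e_Q - 1$ at every point $P_{1,i} \in \eta^{-1}(1)$. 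Set $\Delta = (p-1)\sum_{i=1}^{r_1} P_{1,i}$, the effective divisor $\eta^{\ast}(1)$ of degree $\deg\eta$. By formal GAGA (already invoked to construct $\mathbb{X}$), a function in $\widehat{\mathcal{B}}$ or $\mathcal{B}^\dagger$ that is regular on $\mathbb{V}$ with poles bounded by $\Delta$ on $W$ is algebraic, so I would identify
\[ \ker\bigl(pr \colon \mathcal{B}^\dagger \to \mathcal{S}^\dagger\bigr) = \ker\bigl(pr \colon \widehat{\mathcal{B}} \to \mathcal{S}\bigr) = H^0\bigl(X_L,\mathcal{O}(\Delta)\bigr).\]

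Next I would run the count. From $\deg\Delta = (p-1)r_1 = \deg\eta$ and \eqref{riemann-hurwitz eq} we get $\deg\Delta = 2g - 2 + r_0 + r_1 + r_\infty$, which is strictly larger than $2g-2$ since each of $r_0,r_1,r_\infty$ is $\geq 1$. Hence $H^1(X_L,\mathcal{O}(\Delta)) = 0$, and because this vanishing holds on every fibre of $\mathbb{X}/\mathcal{O}_L$, cohomology and base change give $H^1(\mathbb{X},\mathcal{O}(\Delta)) = 0$ too. Feeding this into the long exact sequence of $0 \to \mathcal{O}(\Delta) \to \mathcal{O}(D) \to \mathcal{Q} \to 0$, where $D$ is a large effective divisor supported on $W$ and $\mathcal{Q}$ a skyscraper sheaf, makes $pr$ surjective onto the integral ``tails'' at each finite level over $\mathcal{O}_L$, and a bounded $\mathcal{O}_L$-linear splitting of $pr$ then exists at each level. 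Riemann--Roch gives $\dim_L H^0(X_L,\mathcal{O}(\Delta)) = \deg\eta + 1 - g = g - 1 + r_0 + r_1 + r_\infty$, which is assertion (2) for $\mathcal{B}^\dagger$ and $\widehat{\mathcal{B}}$.

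The remaining work is to pass from these finite-level statements to the full rings and then to the $\vec{b}$-twist. For surjectivity onto $\mathcal{O}_{\mathcal{S}^\dagger}$: given $\vec\phi \in \mathcal{O}_{\mathcal{S}^\dagger}$, decompose $\vec\phi = \sum_j \vec\phi_j$ where $\vec\phi_j$ gathers the monomials whose coefficient has $p$-adic valuation in the window $[(j-1)/e, j/e)$. Each $\vec\phi_j$ is a \emph{finite} tail, because coefficients of an element of $\mathcal{E}_Q$ tend to zero, and it lies in $\gothm^{j-1}$ times a space of integral tails of bounded pole order; the overconvergence of $\vec\phi$ bounds how fast those pole orders grow with $j$. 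Lifting $\vec\phi_j$ through the bounded splitting from the previous step produces $f_j \in B^\dagger$ whose overconvergence radii do not degenerate too quickly, so $\sum_j f_j$ converges in $B^\dagger$ and maps to $\vec\phi$. For $\widehat{\mathcal{B}}$ one argues identically, using $p$-adic completeness of $\widehat{B}$ in place of overconvergence. For the twisted spaces I would use \eqref{local change of frob is eq 1 mod p}: since $\vec{b} \equiv 1 \bmod \gothm$ and $\vec{b}$ is a unit in $\mathcal{O}_{\mathcal{R}^\dagger}$, multiplication by $\vec{b}$ is an isometric automorphism of $\mathcal{R}^\dagger$. Surjectivity of $pr$ on $\vec b(\mathcal{B}^\dagger)$ onto $\mathcal{O}_{\mathcal{S}^\dagger}$ follows by successive approximation: lift $\vec\phi$ to $f_0 \in B^\dagger$ ignoring $\vec{b}$, note $pr(\vec{b}f_0) = \vec\phi + (\text{error in } \gothm\mathcal{O}_{\mathcal{S}^\dagger})$, correct the error with the untwisted lift, and iterate. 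For the kernel, under the isomorphism $f \mapsto \vec{b}f$ the operator $pr$ on $\vec b(\mathcal{B}^\dagger)$ corresponds to $pr + pr\circ(\vec{b}-1)$ on $\mathcal{B}^\dagger$; composing the correction $pr\circ(\vec{b}-1)$ with a bounded splitting $s$ of $pr$ gives an operator $T$ on $\mathcal{S}^\dagger$ with $T(\mathcal{O}_{\mathcal{S}^\dagger}) \subseteq \gothm\mathcal{O}_{\mathcal{S}^\dagger}$, so $\mathrm{id}+T$ is invertible and $k \mapsto \vec b\bigl(k + s((\mathrm{id}+T)^{-1}(-pr(\vec{b}k)))\bigr)$ is an isomorphism from $\ker(pr|_{\mathcal{B}^\dagger})$ onto $\ker(pr|_{\vec b(\mathcal{B}^\dagger)})$. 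Thus the twisted kernels also have dimension $g - 1 + r_0 + r_1 + r_\infty$.

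The main obstacle I expect is entirely in the last paragraph: keeping explicit track of overconvergence radii so that the infinite sums and the successive-approximation procedure converge \emph{inside} $B^\dagger$, not merely in a $p$-adic completion. This is a standard feature of weak completions and the map $pr$ is very concrete, so it should go through, but it is the one delicate point. The numerology itself is robust --- $\deg\eta$ automatically exceeds $2g-2$, so there are no exceptional cases --- and the dimension comes out exactly as $g - 1 + r_0 + r_1 + r_\infty$ via Riemann--Hurwitz \eqref{riemann-hurwitz eq}.
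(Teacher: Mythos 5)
Your Riemann--Roch computation and the numerology via \eqref{riemann-hurwitz eq} are exactly right, and the identification of $\ker(pr)$ with $H^0(\mathcal{O}(\Delta))$ for $\Delta = (p-1)\sum P_{1,i}$ is the same divisor the paper uses. But the route is genuinely different, and the last half of your argument does a great deal of work the paper makes unnecessary. The paper computes everything mod $\pi_\circ$: the reduction $\overline{pr}$ on $\overline{B}$ is elementary to analyze (expand and truncate), Riemann--Roch over $\mathbb{F}_q$ gives the kernel dimension and surjectivity, and Lemma \ref{computing kernel dimensions by reducing mod pi} lifts both facts to $\widehat{B}$ and $\vec b(\widehat{\mathcal{B}})$ at once (since $\vec b \equiv 1 \bmod \gothm$, the two reductions are literally the same map). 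You instead work on the generic fibre $X_L$ with cohomology-and-base-change, which is fine but buys nothing here. Where the approaches really diverge is the overconvergent case. The paper's key observation --- which you miss --- is that $\ker(pr\colon\mathcal{R}\to\mathcal{S})$ consists of Laurent tails with bounded negative degree, so it is \emph{automatically} contained in $\mathcal{R}^\dagger$; combined with the Cartesian diagram \eqref{overconvergent expansion diagram}, this gives $\ker(pr|_{W^\dagger}) = \ker(pr|_{\widehat{W}})$ with no further work, and surjectivity of $pr$ on $W^\dagger_0$ for free once it holds on $\widehat{W}_0$ (a lift $g\in\widehat{B}_0$ of an overconvergent tail $\phi$ has overconvergent expansion at each $Q$, since its negative part is $\phi_Q$ and its non-negative part is a power series, hence $g\in B^\dagger$). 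Your explicit decomposition of $\phi$ by valuation windows, the ``bounded splitting,'' and the $(\mathrm{id}+T)^{-1}$ successive approximation are all attempts to get overconvergence by hand. You correctly flag this as the delicate point, and indeed it is not completed: the infinite sums $\sum_j f_j$ and $\sum_n (-T)^n$ converge $\pi_\circ$-adically, which suffices in the complete ring $\widehat{B}$ but not a priori in the weakly complete $B^\dagger$, and your sketch does not actually control the overconvergence radii of the partial sums. This is precisely the gap the Cartesian-diagram argument is designed to avoid. So your approach is salvageable but substantially harder than necessary; the paper's proof shows that once you observe that the kernel consists of finite-order-pole functions and that $B^\dagger = \widehat{B}\cap\mathcal{R}^\dagger$, all of the radius bookkeeping evaporates.
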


	\noindent To prove Proposition \ref{local to global: kernel and cokernel}
	we need the following lemma:
	
	\begin{lemma} \label{computing kernel dimensions by reducing mod pi}
		Let $f: A \to V$ be a continuous map of Banach spaces
		such that $f(A_0) \subset V_0$. If $\overline{f}: \overline{A} \to \overline{V}$ is surjective, 
			then $f$ is surjective and  $f(A_0)=V_0$. Furthermore,
			\begin{align}
			\overline{\ker(f)} &= \ker(\overline{f}). \label{mod p kernel result}
			\end{align}
	\end{lemma}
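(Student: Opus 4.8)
The plan is a successive-approximation argument in the style of the open mapping theorem. The two facts I would lean on throughout are that $A$ is complete (a Banach space) and that, by the running convention that all norms take values in $|L|_p$, the open unit ball $\{x \in V : |x| < 1\}$ coincides with $\gothm V_0 = \pi_\circ V_0$, and likewise for $A$. I also note that $\ker(f)$ is a closed subspace of $A$, hence itself a Banach space with unit ball $\ker(f) \cap A_0$, so that $\overline{\ker(f)}$ denotes the image of $\ker(f) \cap A_0$ in $\overline{A}$.

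First I would prove that $f(A_0) = V_0$ and that $f$ is surjective. Given $v \in V_0$, surjectivity of $\overline{f}$ gives $a_0 \in A_0$ with $v - f(a_0) \in \gothm V_0$, say $v - f(a_0) = \pi_\circ v_1$ with $v_1 \in V_0$. Iterating, I construct $a_n \in A_0$ with $v - \sum_{k=0}^n \pi_\circ^k f(a_k) \in \pi_\circ^{n+1} V_0$ for every $n$. Since $A$ is complete, the series $a = \sum_{n \geq 0} \pi_\circ^n a_n$ converges in $A_0$, and continuity of $f$ gives $f(a) = v$; hence $V_0 \subseteq f(A_0)$, and together with the hypothesis $f(A_0) \subseteq V_0$ this yields $f(A_0) = V_0$. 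For arbitrary $v \in V$, choosing $n$ with $\pi_\circ^n v \in V_0$ and writing $\pi_\circ^n v = f(a)$ with $a \in A_0$ gives $f(\pi_\circ^{-n} a) = v$, so $f$ is surjective.

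Then I would deduce \eqref{mod p kernel result}. The inclusion $\overline{\ker(f)} \subseteq \ker(\overline{f})$ is immediate, since any $a \in \ker(f) \cap A_0$ has $\overline{f}(\overline{a}) = \overline{f(a)} = 0$. Conversely, lift $\overline{a} \in \ker(\overline{f})$ to some $a_0 \in A_0$; then $f(a_0) \in \gothm V_0$, so $f(a_0) = \pi_\circ v_1$ with $v_1 \in V_0$, and by the surjectivity $f(A_0) = V_0$ just established there is $a_1 \in A_0$ with $f(a_1) = v_1$. Then $a_0 - \pi_\circ a_1$ lies in $A_0$, belongs to $\ker(f)$, and reduces to $\overline{a}$ modulo $\gothm A_0$; hence $\overline{a} \in \overline{\ker(f)}$.

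I do not expect a genuine obstacle here. The only delicate points are identifying $\gothm V_0$ with the open unit ball (which uses the hypothesis that norms take values in $|L|_p$) and the convergence of $\sum_{n \geq 0} \pi_\circ^n a_n$ together with the interchange of $f$ and the limit (which uses completeness of $A$ and continuity of $f$); everything else is bookkeeping.
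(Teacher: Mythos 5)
Your proof is correct and follows essentially the same route as the paper's: successive approximation for $f(A_0)=V_0$ (and hence surjectivity), then a single correction step $a_0 - \pi_\circ a_1$ to lift an element of $\ker(\overline f)$ into $\ker(f)\cap A_0$. You spell out the convergence and the interchange of $f$ with the limit a bit more explicitly than the paper does, but the underlying argument is the same.
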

	\begin{proof}
		Recall that $\pi_\circ$ is a uniformizing element of $\mathcal{O}_L$. Assume 
		$\overline{f}$ is surjective. To prove $f(A_0)=V_0$, take $x \in V_0$, 
		and successively
		approximate $x$ by elements of $f(A_0)$.
		This gives the surjectivity of $f$ as well. To prove \eqref{mod p kernel 
			result}, 
		we 
		write $\overline{A}=\ker(\overline{f})\oplus \overline M$.
		Consider the map $g:\ker(f)_0 \to \overline{A}$.  The image of $g$
		lies in $\ker(\overline{f})$ and the kernel of $g$ is $\pi_\circ\ker(f)_0$.
		This gives an injective map $\overline{\ker(f)} \to
		\ker(\overline{f})$. To show surjectivity, let $\overline{x} 
		\in\ker(\overline{f})$ and let $x$ be a lift
		of $\overline{x}$ to $A_0$. Then $f(x) \in \pi_\circ V_0$.
		Write $f(x)=\pi_\circ y$, where $y \in V_0$.
		There exists $a \in A_0$ such that $f(a)=y$.
		We have $f(x-\pi_\circ a)=0$.
		Thus, $x-\pi_\circ a $ is a lift of $x$ contained in 
		$\ker(f)_0$. 
	\end{proof}

	\begin{proof} (of Proposition \ref{local to global: kernel and cokernel})
		Let us first consider $\widehat{W}$.
		By Lemma \ref{computing kernel dimensions by reducing mod pi} we may prove 
		the 
		corresponding results for $\overline{pr}$.
		We know from \eqref{local change of frob is eq 1 mod p} that 
		$\vec{b} $
		is the identity modulo $\gothm$. In particular, the kernel of the
		projection of $\widehat{\mathcal{B}}$ and $\vec{b}\widehat{\mathcal{B}}$ will
		have the same dimension. We calcuate $\overline{pr}$ on $\overline{B}$
		as follows: expand $g \in \overline{B}$ 
		in terms of the parameter $\overline{u}_Q$ and then truncate the
		Laurent series depending on $\eta(Q)$.
		Let $D$ be the divisor $\sum_{i=1}^{r_1} (p-1)[P_{1,i}]$. 
		The kernel of $\overline{pr}$ is $H^0(X,\mathcal{O}_X(D))$,
		which has dimension $g-1+r_0+r_1+r_\infty$ by Riemann-Roch and 
		\eqref{riemann-hurwitz eq}. Surjectivity also 
		follows from Riemann-Roch. 
		To prove the results for $W^\dagger$, first note
		that $\ker(pr:\mathcal{R} \to \mathcal{S}) \subset \mathcal{R}^\dagger$,
		since this kernel consists of functions with finite order poles. Thus $\ker(pr:W^\dagger \to \mathcal{S}^\dagger)= \ker(pr:\widehat{W} \to \mathcal{S})$.
		Surjectivity follows from the fact that \eqref{overconvergent expansion diagram} is 
		Cartesian. 
	\end{proof}

	\paragraph{Choosing a basis and subspace}
	For the remainder of this section, we let $V=\vec{b}(\mathcal{B}^\dagger)$
	and we let $v=U_p \circ \vec{c}$. From Proposition \ref{local to global: kernel and cokernel}
	we know that condition \eqref{condition: surjective on tails with finite coker} is satisfied. We let $I$ and $B(I)$ be as in \S \ref{subsubsection: Dwork operators are nuclear}. We will estimate
	$P(v,s)$ by estimating a Fredholm determinant on a subspace of $V$. 
	The first step is to describe
	$x=(x_i)_{i \in I} \in \mathbf{c}(I)$ such that $\mathbf{s}(I,x)_0 =V\cap \mathcal{W}$. 
	We break up the definition of $x_i$ into four cases: the
	first case is when $i \in K$ and each the other three cases correspond to
	the three types of points $Q$ described in \S \ref{subsubsection: the local frob and Up}. 
	\begin{align} \label{X definition}
		x_i &= \begin{cases}
			1 & i \in K \\
			\pi^{pn} & i=(n,Q),~ \nu(Q) =0,\infty  \text{ and $\rho_Q$ is unramified} \\
			\pi_{d_l}^{pn} & i=(n,Q),~ \nu(Q) =0,\infty  \text{ and $\rho_Q$ is ramified with $Q=\tau_l$} \\
			p^{a(n)} & i=(n,Q) \text{ and } \nu(Q)=1.
		\end{cases}
	\end{align}
	From the definition of $\mathcal{W}$ we see that
	$s(I,x)_0 =V\cap \mathcal{W}$.
	Indeed, we just selected the $x_i$ appropriately for each summand in the definition
	of $\mathcal{W}$.

	\paragraph{Estimating the column vectors}
	We now estimate $\mathbf{col}_{(i,1)}(v,B(I_E,x))$ for each $i \in I$. 
	We break this up into the four cases used when defining $x_i$.
	\begin{enumerate}[label=\Roman*.]
		\item For $i \in K$, we have $x_ie_i=e_i$. We know from \eqref{eq how the basis looks} that
		$e_i \in \mathcal{W}$. By \eqref{equation: W is preserved} we know $v(\mathcal{W}) \subset \mathcal{W}$, which means
		$v (e_i) \in s(I,x)_0$. Thus, $\mathbf{col}_{(i,1)}(v,B(I_E,x))\geq 0$
		and 
		\begin{align*}
			\{\mathbf{col}_{(i,1)}(v,B(I_E,x))\}_{i \in K} \succeq \{\underbrace{0,\dots,0}_{g-1+r_0+r_1+r_\infty} \}.
		\end{align*}
		\item Fix $Q$ with $\eta(Q)=1$ and consider $i=(n,Q)\in J$. 
		Recall that $x_i=p^{a(n)}$ and $e_i=u_Q^{-n} + c_i$. 
		Write $n=j+pm$, where $0 \leq j <p$. By \eqref{equation: W is preserved} we have $v(x_ic_i) \in p^{a(n)}\mathcal{W}$ 
		and by \eqref{UP computation type 1} we have $v(x_iu_Q^{-n}) \subset p^m\mathcal{W}$.
		From the definition of $a(n)$, we see that $a(n)\geq m$, which implies
		$v (x_ie_i) \in p^m\mathcal{W}$. Therefore $\mathbf{col}_{(i,1)}(v,B(I_E,x))\geq m$.
		We only consider the pairs $(Q,n)$ with $n\geq p$ by \eqref{J definition}. This gives:
		\begin{align*}
			\{\mathbf{col}_{((n,Q),1)}(v,B(I_E,x))\}_{n\geq p} \succeq  \{1,2,3,\dots \}^{ \times p}.
		\end{align*}
		\item Fix $Q\in W$ such that $\eta(Q)=0,\infty$ and $\rho_Q$
		is unramified. Consider $i=(n,Q)\in J$. By \eqref{J definition} we only need to consider pairs $(n,Q)$ where
		$n\geq 1$. 
		From \eqref{X definition}, \eqref{UP computation type 2} and \eqref{equation: W is preserved}, we see that $v(x_ie_i) \in p^n\mathcal{W}$. This gives:
		\begin{align*}
		\{\mathbf{col}_{((n,Q),1)}(v,B(I_E,x))\}_{n\geq 1} \succeq  \{1,2,3,\dots \}.
		\end{align*}
		\item Finally,  fix $Q\in W$ such that $\eta(Q)=0,\infty$ and $\rho_Q$
		is ramified. Then $Q=\tau_l$ for some $1\leq l \leq {\mathbf{m}}$. Consider $i=(n,Q)\in J$. Again, we only consider pairs $(n,Q)$ where
		$n\geq 1$. From \eqref{X definition}, \eqref{UP computation type 3} and \eqref{equation: W is preserved}, we have $v(x_ie_i) \in \pi_{d_l}^{(p-1)n}\mathcal{W}$. As $v_p(\pi_{d_l}^{(p-1)n})=\frac{n}{d_l}$  we obtain:
		\begin{align*}
		\{\mathbf{col}_{((n,Q),1)}(v,B(I_E,x))\}_{n\geq 1} \succeq  \{\frac{1}{d_l},\frac{2}{d_l}, \dots \}.
		\end{align*}
	\end{enumerate}
	\noindent 
	Recall that there are $\mathbf{m}$ points where $\rho_Q$ is ramified.
	We put everything together to get:
	\begin{align*}
		\{\mathbf{col}_{(i,1)}(v,B(I_E,x))\}_{i \in I} \succeq \{\underbrace{0,\dots,0}_{g-1+r_0+r_1+r_\infty} \}
		\bigsqcup  \Bigg\{1,2,\dots \Bigg\}^{\times(pr_1 + r_0 + r_\infty - {\mathbf{m}})} 
		\bigsqcup \Bigg (\bigsqcup_{i=1}^{\mathbf{m}} \Bigg\{ \frac{1}{d_i}, \frac{2}{d_i}, \dots \Bigg\} \Bigg ).
	\end{align*}
	From Remark \ref{remark:tightness on first coordinate} we see that $v$ is
	tight. Since $v$ is a $p$-Dwork operator and \eqref{condition: surjective on tails with finite coker} is satisfied, 
	Proposition \ref{proposition: NP bounds with operator assumption1}
	follows from Lemma \ref{lemma: estimating NP by estimating columns} and Proposition \ref{corollary: completely continuous  on small radius}.

		\subsection{Finishing the proof}
		
		We now finish the proof of Theorem \ref{main theorem}. 
		From \eqref{equation: L function estimate comes from U_p}
		and Proposition \ref{proposition: NP bounds with operator assumption1}
		we know
		\begin{align} \label{lower bound for NP 1}
		NP_q(L(\rho,V,s))_{<1} & \succeq  \big 
		\{\underbrace{0,\dots,0}_{g-1+r_0+r_1+r_\infty}
		\big \}
		\bigsqcup \Bigg ( \bigsqcup_{i=1}^{\mathbf{m}}\Bigg\{ \frac{1}{d_{i}}, \dots, 
		\frac{d_i-1}{d_i}\Bigg\} \Bigg ).
		\end{align}
		Comparing \eqref{introduction of L-function} with \eqref{introduction of L-function:2}
		gives
		\begin{align*} 
		L(\rho,V,s)&= L(\rho,s)\cdot \prod_{\stackrel{R \in W }{R \neq \tau_i}} 
		(1-\rho(Frob_R)s).
		\end{align*}
		This product has $r_0+r_1+r_\infty-{\mathbf{m}}$ terms. Each term accounts for a 
		slope zero segment. Thus,
		\begin{align*} 
		NP_q(L(\rho,s))_{<1} \succeq  \big 
		\{\underbrace{0,\dots,0}_{g-1+{\mathbf{m}}}
		\big \}
		\bigsqcup \Bigg ( \bigsqcup_{i=1}^{\mathbf{m}} \Bigg\{ \frac{1}{d_{i}}, \dots, 
		\frac{d_i-1}{d_i}\Bigg\} \Bigg ).
		\end{align*}	
		We know that the degree of $L(\rho,s)$ is 
		$2(g-1+{\mathbf{m}}) + \sum (d_{i} - 1)$, which accounts for the remaining slope one 
		segments. This completes the proof.

	\bibliographystyle{plain}
	\bibliography{bibliography.bib}

\end{document}